\title[Explicit Serre duality]{Explicit Serre duality on complex spaces}
\author{Jean Ruppenthal \& H{\aa}kan Samuelsson Kalm \& Elizabeth Wulcan}
\thanks{The first author was supported by the Deutsche Forschungsgemeinschaft (DFG, German Research Foundation),
grant RU 1474/2 within DFG's Emmy Noether Programme.}
\thanks{The second and the third author were partially supported by the Swedish Research Council.}
\subjclass[2000]{32A26, 32A27, 32B15, 32C30}
\address{H. Samuelsson Kalm, E. Wulcan, Department of Mathematical Sciences, Division of Mathematics, University of Gothenburg and 
Chalmers University of Technology, SE-412 96 G\"{o}teborg, Sweden}
\email{hasam@chalmers.se, wulcan@chalmers.se}
\address{J. Ruppenthal, Department of Mathematics, University of Wuppertal, Gaussstr.\ 20, 42119 Wuppertal, Germany}
\email{ruppenthal@uni-wuppertal.de}
\date{\today}
\newtheorem{proposition}{Proposition}[section]
\newtheorem{theorem}[proposition]{Theorem}
\newtheorem{lemma}[proposition]{Lemma}
\newtheorem{corollary}[proposition]{Corollary}
\theoremstyle{definition}
\newtheorem{definition}[proposition]{Definition}
\newtheorem{example}[proposition]{Example}
\newtheorem{remark}[proposition]{Remark}
\numberwithin{equation}{section}
\DeclareMathOperator{\Ext}{\mathscr{E}\text{\kern -3pt {\calligra\Large xt}}\,\,}
\newcommand{\C}{\mathbb{C}}
\newcommand{\debar}{\bar{\partial}}
\newcommand{\dbar}{\bar{\partial}}
\newcommand{\A}{\mathscr{A}}
\newcommand{\B}{\mathscr{B}}
\newcommand{\F}{\mathscr{F}}
\newcommand{\G}{\mathscr{G}}
\newcommand{\HH}{\mathscr{H}}
\newcommand{\R}{\mathbb{R}}
\newcommand{\J}{\mathcal{J}}
\newcommand{\W}{\mathcal{W}}
\newcommand{\PM}{\mathcal{PM}}
\newcommand{\hol}{\mathscr{O}}
\newcommand{\V}{\mathcal{V}}
\newcommand{\K}{\mathscr{K}}
\newcommand{\Proj}{\mathscr{P}}
\def\newop#1{\expandafter\def\csname #1\endcsname{\mathop{\rm #1}\nolimits}}
\begin{document}
\nocite{*}
\bibliographystyle{plain}

\begin{abstract}
In this paper we use recently developed calculus
of residue currents
together with integral formulas to give a new explicit analytic realization, as well as a new analytic proof, of 
Serre duality on any reduced pure $n$-dimensional paracompact complex space $X$. 
At the core of the paper is the introduction of certain fine sheaves $\B_X^{n,q}$ of 
currents on $X$ of bidegree $(n,q)$, such that the Dolbeault complex $(\B_X^{n,\bullet},\,\debar)$ 
becomes, in a certain sense, a dualizing complex. 
In particular, if $X$ is Cohen-Macaulay 
then $(\B_X^{n,\bullet},\,\debar)$ is an explicit fine resolution of the Grothendieck dualizing sheaf.
\end{abstract}

\maketitle
\thispagestyle{empty}

\section{Introduction}
Let $X$ be a complex $n$-dimensional manifold and let $F\to X$ be a complex vector bundle. 
Let $\mathcal{E}^{0,q}(X,F)$ denote the space of smooth $F$-valued $(0,q)$-forms on $X$
and let $\mathcal{E}^{n,q}_{c}(X,F^*)$ denote the space of smooth compactly supported $(n,q)$-forms on $X$
with values in the dual vector bundle $F^*$. 
Serre duality, \cite{Serre}, can be formulated analytically as follows: \emph{There is a non-degenerate pairing}
\begin{equation}\label{analSerre}
H^q \left( \mathcal{E}^{0,\bullet}(X,F), \debar \right)\times H^{n-q} \left(\mathcal{E}^{n,\bullet}_{c}(X,F^*), \debar \right) \to \C,
\end{equation}
\begin{equation*}
([\varphi]_{\debar}, [\psi]_{\debar}) \mapsto \int_X \varphi\wedge\psi,
\end{equation*}
\emph{provided that $H^q(\mathcal{E}^{0,\bullet}(X,F),\debar)$ and $H^{q+1}(\mathcal{E}^{0,\bullet}(X,F),\debar)$ 
are Hausdorff considered as topological vector spaces.}
If we set $\F:=\hol(F)$ and $\F^*:=\hol(F^*)$ and let $\mathit{\Omega}_X^n$ denote the sheaf of holomorphic $n$-forms on $X$,
then one can, via the Dolbeault isomorphism, rephrase
Serre duality more algebraically: There is a non-degenerate pairing
\begin{equation}\label{eq:algSerre}
H^q(X, \F)\times H^{n-q}_{c}(X, \F^*\otimes \mathit{\Omega}_X^{n}) \to \C,
\end{equation} 
realized by the cup product, provided that $H^q(X,\F)$ and $H^{q+1}(X,\F)$ are Hausdorff. 
In this formulation
Serre duality has been generalized to complex spaces, see, e.g., Hartshorne
\cite{Hartshorne1}, \cite{Hartshorne2}, and Conrad \cite{Conrad} for the algebraic setting and Ramis-Ruget \cite{RaRu}
and Andreotti-Kas \cite{AK} for 
the analytic. 
In fact, if $X$ is a pure $n$-dimensional paracompact complex space 
that in addition is Cohen-Macaulay, then again there is a perfect pairing
\eqref{eq:algSerre} if we construe $\mathit{\Omega}^n_X$ as the {\em Grothendieck dualizing sheaf}
that we will get back to shortly.
If $X$ is not Cohen-Macaulay things get more involved and $H^{n-q}_{c}(X,\F^*\otimes\mathit{\Omega}^n_X)$
is replaced by $\textrm{Ext}^{-q}_{c}(X;\F, \mathbf{K}^{\bullet})$, where $\mathbf{K}^{\bullet}$
is the {\em dualizing complex} in the sense of Ramis-Ruget
\cite{RaRu}, that is a certain complex of $\hol_X$-modules
with coherent cohomology.

To our knowledge there is no such explicit analytic realization 
of Serre duality as \eqref{analSerre} in the case of singular spaces. 
In fact, {\em verbatim} the pairing
\eqref{analSerre} cannot realize Serre duality in general since the Dolbeault complex 
$(\mathcal{E}_X^{0,\bullet},\debar)$\footnote{See below for the definition of $\mathcal{E}_X^{p,q}$; the sheaf of smooth
$(p,q)$-forms on $X$.} in general does not provide a resolution of $\hol_X$.
In this paper we replace the sheaves of smooth forms by certain 
 fine sheaves of currents $\A_X^{0,q}$ and $\B_X^{n,n-q}$ 
that are smooth on $X_{reg}$ and such that \eqref{analSerre} with
$\mathcal{E}^{0,\bullet}$ and $\mathcal E^{n,\bullet}$ replaced by
$\A^{0,\bullet}$ and $\B^{n,\bullet}$, respectively, indeed realizes Serre duality.

We will say that a complex $(\mathscr{D}^{\bullet}_X,\delta)$ of fine sheaves is a 
\emph{dualizing Dolbeault complex} for a coherent sheaf $\F$ if $(\mathscr{D}^{\bullet}_X,\delta)$ has coherent cohomology
and if there is a non-degenerate pairing $H^q(X,\F)\times H^{n-q}(\mathscr{D}^{\bullet}_{c}(X),\delta)\to \C$.
The relation to the Ramis-Ruget dualizing complex is not completely
clear to us, but we still find this terminology convenient.
For instance, $(\B^{n,\bullet}_X,\debar)$ is a dualizing Dolbeault complex for $\hol_X$.

At this point it is appropriate to mention that Ruget in \cite{Ruget}
shows, using Coleff-Herrera residue theory, that there is an injective morphism 
$\mathbf{K}^{\bullet}_X\to \mathscr{C}_X^{n,\bullet}$, where $\mathscr{C}_X^{n,\bullet}$ is the sheaf of germs of currents
on $X$ of bidegree $(n,\bullet)$.

\begin{center}
---
\end{center}

Let $X$ be a reduced complex space of pure dimension $n$.
Recall that every point in $X$ has a neighborhood $V$ that can be embedded into 
some pseudoconvex domain $D\subset\C^N$, $i\colon V\to D$, and that
$\hol_V\cong \hol_{D}/\J_V$, where $\J_V$ is the radical ideal sheaf in $D$ defining
$i(V)$. Similarily, a $(p,q)$-form $\varphi$ on $V_{reg}$ is said to be smooth on $V$ if there is a 
smooth $(p,q)$-form $\tilde{\varphi}$ in $D$ such that $\varphi=i^*\tilde{\varphi}$ on $V_{reg}$.
It is well known that the so defined smooth forms on $V$ define an intrinsic sheaf $\mathcal{E}_X^{p,q}$ on $X$.
The currents of bidegree $(p,q)$ on $X$ are defined as the dual of the space of compactly
supported smooth $(n-p,n-q)$-forms on $X$. More concretely, given
a local embedding $i\colon V\to D$, for any $(p,q)$-current $\mu$ on $V$, $\tilde{\mu}:=i_*\mu$ is a 
current of bidegree $(p+N-n, q+N-n)$ in $D$ with the property that $\tilde{\mu}.\xi=0$ for every test form
$\xi$ in $D$ such that $i^*\xi|_{V_{reg}}=0$. Conversely, if $\tilde{\mu}$ is a current in $D$ with this property,
then it defines a current on $V$ (with a shift in bidegrees). We will often suggestively write
$\int \mu\wedge \xi$ for the action of the current $\mu$ on the test form $\xi$.

A current $\mu$ on $X$ is said to have the \emph{standard extension property} (SEP) with respect to
a subvariety $Z\subset X$ if for all open $\mathcal U\subset X$, 
$\chi(|h|/\epsilon)\mu|_{\mathcal U} \to \mu|_{\mathcal U}$ as
$\epsilon \to 0$, where  $\mu|_{\mathcal U}$ denotes the restriction
of $\mu$ to $\mathcal U$, $\chi$ is a smooth regularization of 
the characteristic function of $[1,\infty)\subset \R$, and $h$ is any
holomorphic tuple that does not vanish identically on any irreducible
component of $Z\cap \mathcal U$. 
If $Z=X$ we simply say that $\mu$ has the SEP on $X$.
In particular, two currents with the SEP on $X$ are equal on $X$ if and only if they are equal on $X_{reg}$.

We will say that a current $\mu$ on $X$ has 
{\em principal value-type singularities} if $\mu$ is locally integrable outside a hypersurface and has the SEP
on $X$. Notice that
if $\mu$ has principal value-type singularities and $h$ is a generically non-vanishing holomorphic tuple
such that $\mu$ is locally integrable outside $\{h=0\}$, then the action of $\mu$ on a test form $\xi$ can be computed as
\begin{equation*}
\lim_{\epsilon\to 0} \int_X \chi(|h|/\epsilon)\mu\wedge \xi,
\end{equation*} 
where the integral now is an honest integral of an integrable form on the manifold $X_{reg}$.

\medskip

By using integral formulas and residue theory, Andersson and the second author introduced in \cite{AS}
fine sheaves $\A_X^{0,q}$ (i.e., modules over $\mathcal{E}_X^{0,0}$) of $(0,q)$-currents 
with the SEP on $X$, containing $\mathcal{E}_X^{0,q}$, and coinciding with $\mathcal{E}_{X_{reg}}^{0,q}$ on $X_{reg}$, 
such that the associated Dolbeault complex yields a resolution of
$\hol_X$, see \cite[Theorem~1.2]{AS}. Notice that it follows that $H^q(\A^{0,\bullet}(X),\debar)\simeq H^q(X,\hol_X)$.
Moreover, by a standard construction it then follows that each cohomology class in $H^q(\A^{0,\bullet}(X),\debar)$ has a smooth 
representative; cf.\ Section~\ref{sec:cup} below.
Similar to the construction of the $\A$-sheaves in \cite{AS} 
we introduce our sheaves $\B_X^{n,q}$ of $(n,q)$-currents and show that these currents have the SEP on
$X$, that $\mathcal{E}_X^{n,q}\subset \B_X^{n,q}$, and that $\B_X^{n,q}$ coincides with
$\mathcal{E}_{X}^{n,q}$ on $X_{reg}$; cf.\
Proposition~\ref{prop:A}. Moreover, by Theorem~\ref{thm:debar}, $\debar \colon \B_X^{n,q} \to \B_X^{n,q+1}$,
where of course $\debar$ is defined by duality: $\int \debar\mu\wedge \xi:=\pm\int\mu\wedge \debar\xi$
for currents $\mu$ and test forms $\xi$ on $X$.
By adapting the constructions in \cite{AS} to the setting of $(n,q)$-forms we get the following
semi-global homotopy formula for $\debar$.

\begin{theorem}\label{thm:Homotopy}
Let $V$ be a pure $n$-dimensional analytic subset of a pseudoconvex domain $D\subset\C^N$, let 
$D'\Subset D$, and put $V'=V\cap D'$.
There are integral operators
\begin{equation*}
\check{\mathscr{K}}\colon \B^{n,q}(V)\to \B^{n,q-1}(V'), \quad
\check{\mathscr{P}}\colon \B^{n,q}(V)\to \B^{n,q}(V'),
\end{equation*}
such that if $\psi\in \B^{n,q}(V)$, then the homotopy formula 
\begin{equation*}
\psi = \debar \check{\mathscr{K}} \psi + \check{\mathscr{K}}(\debar \psi) + \check{\mathscr{P}} \psi
\end{equation*}
holds on $V'$.
\end{theorem}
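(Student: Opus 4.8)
The plan is to construct the operators $\check{\K}$ and $\check{\Proj}$ by adapting the Andersson--Samuelsson integral machinery from \cite{AS}, which was developed for $(0,q)$-forms, to the case of $(n,q)$-forms. First I would recall the setup: after possibly shrinking, embed $V$ in a pseudoconvex domain and use a Koppelman-type integral formula on $\C^N$ built from a weighted Cauchy--Fantappi\`e--Leray kernel $g$ adapted to $D'\Subset D$, together with a residue (Bochner--Martinelli--Koppelman) factor associated with a locally free resolution of $\hol_{V}$ over a neighborhood of $\overline{D'}$. The key point for the $(n,\bullet)$ case is that an element $\psi\in\A^{n,q}(V)$ should be represented, via the local embedding, by a current $i_*\psi$ on $D$ of bidegree $(N, q+N-n)$ annihilating test forms that pull back to zero on $V_{reg}$; pairing this with the appropriate component of the integral kernel produces the candidates
\begin{equation*}
\check{\K}\psi(z) = \int_V \check{k}(\zeta,z)\wedge\psi(\zeta), \qquad
\check{\Proj}\psi(z) = \int_V \check{p}(\zeta,z)\wedge\psi(\zeta),
\end{equation*}
where $\check{k}$ and $\check{p}$ are the $(n,\bullet)$-in-$z$ parts of the Koppelman kernel and of the weight, respectively.

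The main steps would then be: (i) show that these integrals are well defined, i.e.\ that the kernels have the right principal value/residue structure so that the integrals converge and that $i_*(\check{\K}\psi)$, $i_*(\check{\Proj}\psi)$ annihilate the relevant test forms, hence descend to currents on $V'$; (ii) prove the mapping property, namely that $\check{\K}\psi\in\A^{n,q-1}(V')$ and $\check{\Proj}\psi\in\A^{n,q}(V')$ --- here I would invoke the characterization of the sheaves $\A_X^{n,\bullet}$ (as in Proposition~\ref{prop:A} and Theorem~\ref{thm:debar}) together with the fact that the kernels are themselves built from the same residue currents that generate $\A$, plus the SEP to reduce checking to $X_{reg}$; and (iii) establish the homotopy identity $\psi = \debar\check{\K}\psi + \check{\K}(\debar\psi) + \check{\Proj}\psi$ on $V'$. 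For step (iii) the strategy is the usual one: the full Koppelman formula on the ambient domain, after applying the residue calculus to incorporate the structure form of $V$, yields $\psi = \debar\check{\K}\psi + \check{\K}(\debar\psi) + \check{\Proj}\psi$ up to terms that, by the SEP and the fact that all currents involved are smooth on $V_{reg}$, can be checked to vanish by a computation on the manifold $V'_{reg}$ where the classical Koppelman formula applies verbatim.

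I expect the main obstacle to be step (ii), the mapping property --- specifically verifying that $\check{\K}$ and $\check{\Proj}$ actually land in the prescribed sheaves $\A^{n,q-1}$ and $\A^{n,q}$ rather than in some larger space of currents with the SEP. This requires a careful analysis of the singularities of the kernels along $V_{sing}$ and along the exceptional sets of the resolution of singularities implicit in the definition of $\A$, ensuring the pushed-forward currents have precisely the admissible form (a finite sum of pushforwards of smooth forms times principal-value and residue factors of the type allowed by the definition of $\A_X^{n,\bullet}$). The parallel difficulty in \cite{AS} for the $(0,q)$-case was handled by a rather delicate bookkeeping of the weighted kernels, and the $(n,q)$-case differs only in that the holomorphic $n$-form part is carried along as an essentially harmless spectator factor; nonetheless the verification that composition with $\debar$ and the SEP are respected is where the real work lies. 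The homotopy identity itself, once the mapping properties are in hand, follows formally from the ambient Koppelman formula and the transformation rules for residue currents, and the well-definedness in step (i) is routine given the structure of the kernels.
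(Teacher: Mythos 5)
Your plan follows essentially the same route as the paper: weighted Koppelman kernels built from the structure form, the homotopy identity established first on $V'_{reg}$, and the SEP used to push it across $V_{sing}$. One correction of emphasis is worth making, however. You locate the main difficulty in step (ii), verifying that $\check{\K}$ and $\check{\Proj}$ land in $\A^{n,q-1}$ and $\A^{n,q}$; in the paper this is essentially tautological, because $\A^{n,\bullet}_X$ is \emph{defined} (Definition~\ref{A-sheaves}) as the closure of the currents $\omega\wedge\xi_0$ under precisely these operators $\check{\K}$ and under multiplication by smooth $(0,*)$-forms, and $\check{\Proj}\psi$ is automatically of the form $\omega\wedge\xi$ with $\xi$ smooth, cf.\ \eqref{eq:p}. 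The real work sits in the two places your step (iii) passes over quickly: first, the current-level Koppelman identity on $V'_{reg}$ (Proposition~\ref{Wpropp}) is not a verbatim application of the classical formula on a manifold --- it is obtained by dualizing the $(0,q)$-identity of Proposition~\ref{koppelcurrent} against test forms compactly supported in $V'_{reg}$ and then running a cut-off argument with $\chi_\epsilon=\chi(|h|/\epsilon)$, which requires knowing that $\debar\psi\in\W$, i.e.\ that $\psi\in\textrm{Dom}\,\debar$; second, to promote the identity from $V'_{reg}$ to all of $V'$ one needs every individual term, in particular $\debar\check{\K}\psi$, to have the SEP, and this is exactly the content of Proposition~\ref{prop:A}(ii) and Theorem~\ref{thm:debar}, whose proofs (a double induction with the dimension principle over the singularity sets $V^k$) carry the analytic weight of the whole construction. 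Since you do cite these results, your argument closes, but the ``careful analysis of the singularities of the kernels'' you anticipate for the mapping property is not where the effort actually goes.
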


The integral operators $\check{\K}$ and $\check{\Proj}$ are given by kernels $k(z,\zeta)$ and $p(z,\zeta)$ that
are respectively integrable and smooth on $Reg(V_z)\times Reg(V'_{\zeta})$  
and that have principal value-type singularities at the singular locus of $V\times V'$. In particular, 
one can compute $\check{\K}\psi$ and $\check{\Proj}\psi$ as
\begin{equation*}
\check{\K}\psi(\zeta)=\lim_{\epsilon\to 0} \int_{V_z} \chi(|h(z)|/\epsilon)k(z,\zeta)\wedge \psi(z), \quad
\check{\Proj}\psi(\zeta)=\lim_{\epsilon\to 0} \int_{V_z} \chi(|h(z)|/\epsilon)p(z,\zeta)\wedge \psi(z),
\end{equation*}
where $\chi$ is as above, 
$h$ is a holomorphic tuple such that $\{h=0\}=V_{sing}$, and where the limit is understood in the sense of currents.
We use our integral operators to prove the following result.

\begin{theorem}\label{thm:resol}
Let $X$ be a reduced complex space of pure dimension $n$. 
The cohomology sheaves $\omega_X^{n,q}:=\mathscr{H}^q(\B^{n,\bullet}_X, \debar)$ of the sheaf complex
\begin{equation}\label{RSWkomplex}
0\to \B_X^{n,0} \stackrel{\debar}{\longrightarrow} \B_X^{n,1} \stackrel{\debar}{\longrightarrow}
\cdots \stackrel{\debar}{\longrightarrow} \B_X^{n,n} \to 0
\end{equation}
are coherent. If $X$ is Cohen-Macaulay, then
\begin{equation}\label{RSWkomplexCM}
0\to \omega_X^{n,0}\hookrightarrow \B_X^{n,0} \stackrel{\debar}{\longrightarrow} \B_X^{n,1} \stackrel{\debar}{\longrightarrow}
\cdots \stackrel{\debar}{\longrightarrow} \B_X^{n,n} \to 0
\end{equation}
is exact.
\end{theorem}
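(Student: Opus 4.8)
The plan is to deduce Theorem~\ref{thm:resol} from the semi-global homotopy formula of Theorem~\ref{thm:Homotopy} together with the local structure of the sheaves $\A_X^{n,\bullet}$ and standard facts about coherence. First I would establish coherence of the cohomology sheaves $\omega_X^{n,q}$. Locally, embedding a neighbourhood $V$ of a point into a pseudoconvex domain $D\subset\C^N$ and shrinking to $V'=V\cap D'$, the homotopy formula
\begin{equation*}
\psi=\debar\check{\K}\psi+\check{\K}(\debar\psi)+\check{\Proj}\psi
\end{equation*}
shows that on $V'$ any $\debar$-closed $\psi\in\A^{n,q}(V)$ satisfies $\psi=\debar\check{\K}\psi+\check{\Proj}\psi$, so the $q$-th cohomology of $(\A^{n,\bullet}(V'),\debar)$ is represented by currents in the image of the projection operator $\check{\Proj}$. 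Since $\check{\Proj}$ is given by a smooth kernel $p(z,\zeta)$, its image is a finite-dimensional space (for fixed $\psi$ it depends holomorphically on $\zeta$, and the operator factors through a finitely generated module over $\hol$); more precisely, I would argue as in \cite{AS} that $\check{\Proj}$ has finite rank with holomorphic image, exhibiting $\omega_X^{n,q}$ locally as the cokernel/kernel of a map of free $\hol_X$-modules. Hence $\omega_X^{n,q}$ is coherent by Oka's coherence theorem.

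Next, for the Cohen-Macaulay case, I would show exactness of the augmented complex \eqref{RSWkomplexCM}. Exactness at $\A^{n,q}_X$ for $q\ge 1$ is equivalent to the vanishing $\omega_X^{n,q}=0$ for $1\le q\le n$ (together with the already-established coherence), and exactness at $\A^{n,0}_X$ is the definition of $\omega_X^{n,0}$. So the heart of the matter is: \emph{when $X$ is Cohen-Macaulay, $\omega_X^{n,q}=0$ for $q\ge 1$}. For this I would compare with the Grothendieck dualizing sheaf: on a Cohen-Macaulay space the dualizing complex is concentrated in a single degree, i.e.\ $\mathscr{E}xt^k_{\hol_D}(\hol_V,\mathit{\Omega}_D^N)=0$ for $k\ne N-n$ (locally after embedding), and the nonzero $\mathscr{E}xt$ is the dualizing sheaf $\omega_X$. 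The strategy is to identify $\omega_X^{n,\bullet}$ with this $\mathscr{E}xt$-complex. One route is via the residue-current description of the sheaves $\A_X^{n,\bullet}$: a free resolution of $\hol_V$ over $\hol_D$ gives, by the Andersson--Wulcan machinery, a residue current $R$ whose components realize the $\mathscr{E}xt$ sheaves, and the pushed-forward currents $i_*\A_V^{n,\bullet}$ should be expressible in terms of $R$ and smooth forms, matching $\debar$-cohomology with $\mathscr{E}xt$-cohomology degree by degree. Since in the Cohen-Macaulay case only the top $\mathscr{E}xt$ survives, all higher $\omega_X^{n,q}$ vanish.

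The main obstacle I expect is precisely this last identification: showing that the intrinsically-defined currents $\A_X^{n,q}$ and their $\debar$-cohomology match the algebraically-defined dualizing complex, rather than merely carrying the same cohomology in the regular part. The delicate point is that $\A_X^{n,q}$ consists of currents with the SEP, so two such currents agreeing on $X_{reg}$ agree on $X$; one must leverage this, plus the explicit form of the integral kernels, to control behaviour across $X_{sing}$. Concretely, I would (i) reduce to a local embedding and a Hermitian free resolution $(E_\bullet,f_\bullet)$ of $i_*\hol_V$; (ii) build the associated residue current $R=\sum R_k$ and the structure form $\omega$ with $i_*\mu=\omega\wedge\mu$-type identities; (iii) use the comparison-of-homotopy-operators argument (the integral operators $\check{\K},\check{\Proj}$ versus the algebraic homotopy coming from the resolution) to get a quasi-isomorphism $(\A^{n,\bullet}_V,\debar)\simeq \mathscr{E}xt^{\bullet+N-n}_{\hol_D}(\hol_V,\mathit{\Omega}_D^N)$; and (iv) invoke the Cohen-Macaulay hypothesis to conclude the $\mathscr{E}xt$-complex, hence \eqref{RSWkomplexCM}, is exact except at the left end. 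The coherence statement then falls out as a byproduct, but it can also—and in the write-up perhaps should—be proved first and unconditionally by the finite-rank argument above, so that the Cohen-Macaulay discussion only needs the vanishing.
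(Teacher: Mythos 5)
Your overall strategy for the Cohen--Macaulay part is essentially the paper's: one defines a chain map
$\varrho_k\colon \hol(E^*_{p+k})\otimes\mathit{\Omega}^N_D\to\A^{n,k}_V$, $h\,dz\mapsto\omega_k\cdot h$, from the
$\mathscr{H}om(\,\cdot\,,\mathit{\Omega}^N_D)$-dual of a free resolution of $\hol_D/\J_V$ into $(\A^{n,\bullet}_V,\debar)$,
shows it is a quasi-isomorphism, and reads off both coherence (the cohomology of the algebraic complex is
$\mathscr{E}xt^{p+\bullet}(\hol_D/\J_V,\mathit{\Omega}^N_D)$) and the vanishing of $\omega^{n,q}_X$ for $q\geq 1$ in the
Cohen--Macaulay case. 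Your step (iv) and your identification of the ``main obstacle'' are exactly on target, and the
surjectivity half of the quasi-isomorphism is correctly located in the homotopy formula: for $\debar$-closed $\psi$ the class
is represented by $\check{\Proj}\psi=\omega_q\wedge(\text{holomorphic section})$, which lies in the image of $\varrho_q$.

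There are, however, two genuine gaps. First, the injectivity half of the quasi-isomorphism is nowhere addressed: you must
explain why $\omega_q\cdot h$ being $\debar$-exact in $\A^{n,\bullet}_V$ forces $[h\,dz]$ to vanish in
$\mathscr{E}xt^{p+q}$, i.e.\ forces $h$ into the image of $f^*_{p+q}$. This does not follow from the integral formulas or the
SEP; it is a duality statement for the residue current $R$ (via $i_*\omega_k=R_k\wedge dz$), and the paper imports it from
\cite[Theorem~7.1]{ANoetherDual}. Without it your ``comparison of homotopy operators'' only yields surjectivity on cohomology.
Second, and relatedly, your stand-alone coherence argument does not work as stated: the image of $\check{\Proj}$ is not
finite-dimensional (it is the infinite-dimensional space $\omega_q\cdot\hol(E^*_{p+q}\otimes\mathit{\Omega}^N_D)$, finitely
generated over $\hol$ but not over $\C$), and even granting that $\omega^{n,q}_X$ is a quotient of a coherent sheaf, a
quotient by an uncontrolled subsheaf of relations need not be coherent. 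Identifying the relations is precisely the
injectivity statement above, so coherence cannot be decoupled from the quasi-isomorphism in the way your last sentence
suggests; in the paper both conclusions come out of the single isomorphism
$\omega^{n,q}_V\cong\mathscr{E}xt^{p+q}(\hol_D/\J_V,\mathit{\Omega}^N_D)$.
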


In fact, our proof of Theorem~\ref{thm:resol} shows that if $V\subset X$ is identified with an analytic codimension $\kappa$ 
subset of a pseudoconvex domain $D\subset \C^N$, then $\omega_V^{n,q}\cong \Ext^{\kappa+q}(\hol_D/\J_V, \mathit{\Omega}^N_D)$,
where $\mathit{\Omega}^N_D$ is the canonical sheaf on $D$. Hence, we get a concrete analytic realization of these 
$\Ext$-sheaves. 

The sheaf $\omega_V^{n,0}$ of $\debar$-closed currents in $\B_V^{n,0}$
is in fact equal to the sheaf of $\debar$-closed meromorphic currents on $V$
in the sense of Henkin-Passare \cite[Definition~2]{HePa}, cf.\ \cite[Example~2.8]{AS}. 
This sheaf was introduced earlier by Barlet in a different way
in \cite{Barlet}; cf.\ also \cite[Remark~5]{HePa}.
In case $X$ is Cohen-Macaulay
$\Ext^{\kappa}(\hol_D/\J_V, \mathit{\Omega}^N_D)$ is by definition the Grothendieck dualizing sheaf. Thus, \eqref{RSWkomplexCM}
can be viewed as a concrete analytic fine resolution of the Grothendieck dualizing sheaf in
the Cohen-Macaulay case.

\begin{center}
---
\end{center}

Let $\varphi$ and $\psi$ be sections of $\A_X^{0,q}$ and $\B_X^{n,q'}$ respectively. Since $\varphi$ and $\psi$
then are smooth on the regular part of $X$, the exterior product $\varphi|_{X_{reg}}\wedge \psi|_{X_{reg}}$ 
is a smooth $(n,q+q')$-form on $X_{reg}$. In Theorem~\ref{thm:parning} we show that 
$\varphi|_{X_{reg}}\wedge \psi|_{X_{reg}}$ has a natural extension across $X_{sing}$ as a current 
with principal value-type singularities; we denote this current by $\varphi\wedge\psi$. 
Moreover, it turns out that the Leibniz rule 
$\debar(\varphi\wedge \psi)= \debar\varphi\wedge \psi + (-1)^{q}\varphi\wedge \debar\psi$ holds.
Now, if $q'=n-q$ and $\psi$ (or $\varphi$) has compact support, then
$\int \varphi\wedge \psi$ (i.e., the action of $\varphi\wedge \psi$ on $1$)
gives us a complex number. Since the Leibniz
rule holds we thus get a pairing, a \emph{trace map}, on cohomology level:
\begin{equation*}
Tr \colon H^q\left( \A^{0,\bullet}(X), \debar \right) \times H^{n-q}\left( \B^{n,\bullet}_{c}(X), \debar \right) \to \C, 
\end{equation*}
\begin{equation*}
Tr([\varphi]_{\debar},[\psi]_{\debar}) = \int_X \varphi\wedge \psi,
\end{equation*}
where $\A^{0,q}(X)$ denotes the global sections of $\A_X^{0,q}$ and
$\B^{n,q}_{c}(X)$ denotes the global sections of $\B_X^{n,q}$ with compact support. 
It causes no problems to insert a locally free sheaf:
If $F\to X$ is a vector bundle, $\F=\hol(F)$ the associated locally free sheaf, and $\F^*=\hol(F^*)$ the dual sheaf,
then the trace map gives a pairing $\F\otimes\A^{0,q}(X)\times \F^*\otimes \B_{c}^{n,n-q}(X) \to \C$.

\begin{theorem}\label{thm:main2}
Let $X$ be a paracompact reduced complex space of pure dimension $n$ and $\F$ a locally free sheaf on $X$. 
If $H^q(X,\F)$ and $H^{q+1}(X,\F)$, considered as topological vector
spaces, are Hausdorff (e.g., finite dimensional), 
then the pairing
\begin{equation*}
H^q\left(\F\otimes \A^{0,\bullet}(X), \debar\right) \times H^{n-q}\left(\F^*\otimes \B_{c}^{n,\bullet}(X), \debar\right) \to \C, 
\quad ([\varphi],[\psi]) \mapsto \int_X \varphi \wedge \psi
\end{equation*}
is non-degenerate.
\end{theorem}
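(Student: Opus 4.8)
The plan is to deduce Theorem~\ref{thm:main2} from the abstract form of Serre duality together with the two "resolution'' theorems already in hand. The key point is that the sheaf complex $(\F\otimes\A^{0,\bullet}_X,\debar)$ is, by \cite{AS}, a fine resolution of $\F$, so $H^q(\F\otimes\A^{0,\bullet}(X),\debar)\cong H^q(X,\F)$, while by Theorem~\ref{thm:resol} the complex $(\F^*\otimes\A^{n,\bullet}_X,\debar)$ has coherent cohomology sheaves $\F^*\otimes\omega^{n,q}_X$, so its compactly supported hypercohomology computes a hypercohomology group of the complex $\F^*\otimes\omega^{n,\bullet}_X$ of coherent sheaves. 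Thus the target pairing fits into the framework of Ramis--Ruget duality \cite{RaRu}: the dualizing complex $\mathbf{K}^\bullet_X$ for $\hol_X$ is quasi-isomorphic (after the appropriate degree shift and up to $\otimes\F^*$) to $\omega^{n,\bullet}_X$, and \cite{RaRu} gives a topological duality between $H^q(X,\F)$ and $Ext^{-q}_c(X;\F,\mathbf{K}^\bullet_X)$ whenever the relevant cohomology groups are Hausdorff. So the content of the proof is to identify the concrete pairing $([\varphi],[\psi])\mapsto\int_X\varphi\wedge\psi$ with the abstract Ramis--Ruget pairing, and then non-degeneracy is automatic.

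First I would set up the two resolutions and the trace map at the level of complexes of sheaves, being careful about signs in the Leibniz rule (this is exactly the Leibniz rule for $\varphi\wedge\psi$ asserted just before the theorem, which makes the pairing descend to cohomology). Next I would invoke Theorem~\ref{thm:resol}: on a local embedding $V\hookrightarrow D\subset\C^N$ of codimension $p$, one has $\omega^{n,q}_V\cong\E xt^{p+q}(\hol_D/\J_V,\mathit{\Omega}^N_D)$, and these $\E xt$-sheaves are precisely the cohomology sheaves of the Ramis--Ruget dualizing complex $\mathbf{K}^\bullet_X$ (with a degree shift by $n$). Hence $(\A^{n,\bullet}_X,\debar)$ and $\mathbf{K}^\bullet_X[\,\cdot\,]$ have the same cohomology sheaves; since $\A^{n,\bullet}_X$ is a complex of fine sheaves, its compactly supported cohomology computes the hyperext $Ext_c(X;\hol_X,\mathbf{K}^\bullet_X)$ — i.e., $H^{n-q}(\F^*\otimes\A^{n,\bullet}_c(X),\debar)\cong Ext^{-q}_c(X;\F,\mathbf{K}^\bullet_X)$. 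Then I would quote the Ramis--Ruget theorem to get a (topologically) non-degenerate pairing between this group and $H^q(X,\F)$ under the stated Hausdorff hypotheses.

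The main obstacle — and the step that needs real work rather than citation — is showing that the explicit trace pairing $\int_X\varphi\wedge\psi$ actually agrees, up to a nonzero constant, with the abstract Ramis--Ruget pairing, rather than merely being some pairing between the correct spaces. I would do this locally: on a coordinate patch where $X$ is smooth the statement reduces to the classical fact that $\int\varphi\wedge\psi$ realizes Serre duality on a manifold, which is standard; the issue is the contribution near $X_{sing}$. Here I would use the homotopy formula of Theorem~\ref{thm:Homotopy} together with the principal-value/SEP description of $\varphi\wedge\psi$ from Theorem~\ref{thm:parning} to reduce the global pairing to a sum of local pairings via a partition of unity and a \v{C}ech--Dolbeault (or spectral sequence) argument, checking that the coboundary terms cancel — this is where the Leibniz rule and the fact that $\check{\K},\check{\Proj}$ preserve the $\A$-sheaves are used. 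Once the two pairings are identified locally and patched, non-degeneracy of $\int_X\varphi\wedge\psi$ follows from non-degeneracy of the Ramis--Ruget pairing, completing the proof. A secondary, more technical, point to be careful about is the Hausdorff/topological-vector-space bookkeeping: one must check that the $\A$-complexes carry the correct (DF/FS) topologies so that the duality is topological and the Hausdorff hypothesis on $H^q(X,\F)$ and $H^{q+1}(X,\F)$ suffices, exactly as in the manifold case \eqref{analSerre}.
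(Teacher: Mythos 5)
Your route -- identify $(\F^*\otimes\A^{n,\bullet}_X,\debar)$ with the Ramis--Ruget dualizing complex and then quote \cite{RaRu} -- is genuinely different from the paper's, which never invokes $\mathbf{K}^\bullet_X$ at all, and it has two real gaps. First, knowing that the cohomology sheaves of $(\A^{n,\bullet}_X,\debar)$ are (locally) isomorphic to $\E xt^{p+\bullet}(\hol_D/\J_V,\mathit{\Omega}^N_D)$, i.e.\ to the cohomology sheaves of $\mathbf{K}^\bullet_X$ up to shift, does \emph{not} give a quasi-isomorphism: two complexes with isomorphic cohomology sheaves need not be isomorphic in the derived category. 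The local quasi-isomorphisms $\varrho_\bullet$ constructed in the proof of Theorem~\ref{thm:resol} depend on the choice of embedding, free resolution, Hermitian metrics and the frame $dz$; you would have to glue them into a single global morphism of complexes (or a chain of such) relating $\mathbf{K}^\bullet_X$ to $\A^{n,\bullet}_X$ before you may conclude that compactly supported $\debar$-cohomology of $\A^{n,\bullet}_X$ computes $Ext^{-q}_c(X;\F,\mathbf{K}^\bullet_X)$. Nothing in your sketch (or in the paper) supplies this gluing.

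Second, the step you defer -- matching the explicit pairing $\int_X\varphi\wedge\psi$ with the abstract Ramis--Ruget pairing -- is not a reduction to ``the classical smooth case plus a \v{C}ech patching with cancelling coboundaries''; the entire difficulty sits over $X_{sing}$, on Stein patches $V'$ that contain singular points. What is needed there is precisely the local duality statement (Theorem~\ref{thm:locdual1} in the paper): that $(\A^{n,\bullet}_c(V'),\debar)$ is acyclic except in top degree and that the trace pairing exhibits $H^n(\A^{n,\bullet}_c(V'))$ as the topological dual of $\hol(V')$. That statement is proved with the homotopy formula of Theorem~\ref{thm:Homotopy}, the holomorphy of the kernel $p(z,\zeta)$ in $z$, and a Hahn--Banach/extension argument, and your proposal contains no substitute for it. Once one has this local duality, the paper obtains the global statement directly by a \v{C}ech double-complex argument plus the open mapping and Hahn--Banach theorems (to exploit the Hausdorff hypothesis), together with explicit Dolbeault maps that show the resulting abstract pairing \emph{is} $\int_X\varphi\wedge\psi$ -- at which point \cite{RaRu} and Theorem~\ref{thm:resol} are not needed. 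In short, the citation of Ramis--Ruget does not save you the hard analytic work; it only relocates it into an unproven compatibility of pairings, and without the local duality theorem that compatibility cannot be checked.
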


Since the $\A$-cohomology has smooth representatives,
it follows that if $X$ is compact and $\psi$ is a smooth $\debar$-closed $(n,q)$-form on $X$,
then there is a $u\in \B^{n,q-1}(X)$ (in particular $u$ is smooth on $X_{reg}$) such that $\debar u=\psi$ 
if and only if $\int_X\varphi\wedge\psi=0$ for all smooth $\debar$-closed $(0,n-q)$-forms $\varphi$.

Notice also that, by \cite[Theorem~1.2]{AS}, the complex $(\F\otimes \A^{0,\bullet}_X, \debar)$ is a fine resolution of $\F$ 
and so, via the Dolbeault isomorphism, Theorem~\ref{thm:main2} gives us a non-degenerate pairing
\begin{equation*}
H^q(X,\F) \times H^{n-q}(\F^*\otimes \B_{c}^{n,\bullet}(X), \debar) \to \C.
\end{equation*}
The complex $(\F^*\otimes \B^{n,\bullet}_X,\debar)$ is thus a concrete analytic
dualizing Dolbeault complex for $\F$.
If $X$ is Cohen-Macaulay, then $(\F^*\otimes \B^{n,\bullet}_X,\debar)$
is, by Theorem~\ref{thm:resol} above, a fine resolution of the sheaf $\F^*\otimes \omega^{n,0}_X$ and so
Theorem~\ref{thm:main2} yields in this case a non-degenerate pairing
\begin{equation*}
H^q(X, \F) \times H^{n-q}_{c}(X, \F^*\otimes \omega^{n,0}_X) \to \C.
\end{equation*}
In Section~\ref{sec:cup} we show that this pairing also can be realized as the cup product in \v{C}ech cohomology.

\begin{remark}
By \cite[Th\'eor\`{e}me 2]{RaRu} there is another non-degenerate pairing 
\begin{equation*}
H^q_{c}(X,\F) \times \textrm{Ext}^{-q}(X; \F, \mathbf{K}_X^{\bullet}) \to \C
\end{equation*}
if $H^q_{c}(X,\F)$ and $H^{q+1}_{c}(X,\F)$ are Hausdorff. 
In view of this we believe that one can show that, under the same assumption, the pairing
\begin{equation*}
H^q\left(\F\otimes \A^{0,\bullet}_{c}(X), \debar\right) \times H^{n-q}\left(\F^*\otimes \B^{n,\bullet}(X), \debar\right) \to \C, 
\quad ([\varphi],[\psi]) \mapsto \int_X \varphi \wedge \psi
\end{equation*}
is non-degenerate but we do not pursue this question in this paper.
\end{remark}

\medskip

{\bf Acknowledgment:} We would like to thank Mats Andersson for valuable discussions and comments
that have simplified some proofs significantly. We would also like to thank the referee
for many important comments.

\section{Preliminaries}\label{sec:prelim}
Our considerations here are local or semi-global so let $V$ be a pure $n$-dimensional analytic subset of a pseudoconvex domain 
$D\subset \C^N$. Throughout we let $\kappa = N-n$ denote the
codimension of $V$.

\subsection{Pseudomeromorphic currents on a complex space}\label{ssec:PM}
In $\C_z$ the principal value current $1/z^m$ can be defined, e.g., as the limit as $\epsilon \to 0$ in the sense of currents
of $\chi(|h(z)|/\epsilon)/z^m$, where $\chi$ is a smooth regularization of the characteristic function of $[1,\infty)\subset \R$
and $h$ is a holomorphic function vanishing at $z=0$, or as the value at $\lambda=0$ of the analytic continuation
of the current-valued function $\lambda \mapsto |h(z)|^{{2\lambda}}/z^m$. Regularizations of the form $\chi(|h|/\epsilon)\mu$
of a current $\mu$ occur frequently in this paper and throughout $\chi$ will
denote a smooth regularization of the characteristic function of $[1,\infty)\subset \R$.
The \emph{residue current} $\debar(1/z^m)$ can be computed as the 
limit of $\debar\chi(|h(z)|/\epsilon)/z^m$ or as the value at $\lambda=0$ of $\lambda \mapsto \debar |h(z)|^{{2\lambda}}/z^m$.
Since tensor products of currents are well-defined we can form the current
\begin{equation}\label{elementary}
\tau=\debar \frac{1}{z_1^{m_1}}\wedge \cdots \wedge \debar \frac{1}{z_r^{m_r}}\wedge 
\frac{\gamma(z)}{z_{r+1}^{m_{r+1}}\cdots z_n^{m_n}}
\end{equation}
in $\C^n_z$, where $m_1,\ldots,m_r$ are positive integers, $m_{r+1},\ldots,m_{n}$ are nonnegative integers, and 
$\gamma$ is a smooth compactly supported form. Notice that $\tau$ is anti-commuting in the residue factors 
$\debar(1/z_j^{m_j})$ and commuting in the principal value factors $1/z_k^{m_k}$.
A current of the form \eqref{elementary} is called an \emph{elementary pseudomeromorphic current} 
and we say that a current $\mu$ on $V$ is {\em pseudomeromorphic}, $\mu\in \PM(V)$,
if it is a locally finite sum of pushforwards $\pi_*\tau=\pi_*^1 \cdots \pi_*^{\ell} \tau$ under maps
\begin{equation*}
V^{\ell} \stackrel{\pi^{\ell}}{\longrightarrow} \cdots \stackrel{\pi^{2}}{\longrightarrow} V^1
\stackrel{\pi^{1}}{\longrightarrow} V^0=V,
\end{equation*}
where each $\pi^j$ is either a modification, a simple projection $V^j=V^{j-1}\times Z\to V^{j-1}$, or 
an open inclusion, and $\tau$ is an elementary pseudomeromorphic current on $V^{\ell}$. The sheaf of pseudomeromorphic currents
on $V$ is denoted $\PM_V$.
Since the class of elementary currents is closed under $\dbar$ and $\dbar$ commutes with push-forwards it follows that $\PM_V$ is
closed under $\dbar$. 
Pseudomeromorphic currents were originally introduced in \cite{AW} but with a more restrictive definition;
simple projections were not allowed. In this paper we adopt the definition of pseudomeromorphic
currents in \cite{AS}.

\begin{example}
Let $f\in \hol(V)$ be generically non-vanishing and let $\alpha$ be a smooth form on $V$. Then
$\alpha/f$ is a semi-meromorphic form on $V$ and it defines a \emph{semi-meromorphic current},
also denoted $\alpha/f$, on $V$ by
\begin{equation}\label{HLmap}
\xi \mapsto \lim_{\epsilon \to 0}\int_V \chi(|h|/\epsilon) \frac{\alpha}{f}\wedge \xi, 
\end{equation} 
where $\xi$ is a test form on $V$ and $h\in \hol(V)$ is generically non-vanishing and vanishes on $\{f=0\}$.
That \eqref{HLmap} indeed gives a well-defined current is proved in \cite{HL}; the 
existence of the limit in \eqref{HLmap} relies on Hironaka's theorem on resolution of singularities.
Let $\pi \colon \tilde{V}\to V$ be a smooth modification such that $\{\pi^*f=0\}$
is a normal crossings divisor. Locally on $\tilde{V}$ one can thus choose coordinates so that
$\pi^*f$ is a monomial. One can then show that the semi-meromorphic current $\alpha/f$ is the push-forward
under $\pi$ of elementary pseudomeromorphic currents \eqref{elementary} with $r=0$;
hence, $\alpha/f\in \PM(V)$.

The $(0,1)$-current $\debar(1/f)\in \PM(V)$ is the residue current of
$f$. 
Since the action of $1/f$ on test forms is given by
\eqref{HLmap} with $\alpha=1$ it follows from Stokes' theorem that 
\begin{equation*}
\debar \frac{1}{f}.\, \xi = \lim_{\epsilon \to 0}\int_V  \frac{\debar\chi(|h|/\epsilon)}{f}\wedge \xi. 
\end{equation*}
\hfill $\qed$
\end{example}

One crucial property of pseudomeromorphic currents is the following, see, e.g., \cite[Proposition~2.3]{AS}.

\medskip

\noindent {\bf Dimension principle.}
\emph{Let $\mu\in \PM(V)$ and assume that $\mu$ has support on the subvariety $Z\subset V$. 
If $\textrm{dim}\, V - \textrm{dim}\, Z > q$ and $\mu$ has bidegree $(*,q)$, then $\mu=0$.}

\medskip

Pseudomeromorphic currents can be ``restricted'' to analytic subsets. In fact, following \cite{AW}, 
if $\mu\in\PM(V)$ and $Z\subset V$
is an analytic subset, then $\mu|_{V\setminus Z}$ has a natural pseudomeromorphic extension 
to $V$ denoted $\mathbf{1}_{V\setminus Z}\mu$. Thus, $\mathbf{1}_Z\mu:=\mu-\mathbf{1}_{V\setminus Z}\mu$ is a 
pseudomeromorphic current on $V$ with support on $Z$. In \cite{AW}, $\mathbf{1}_{V\setminus Z}\mu$ is defined
as $|h|^{2\lambda}\mu|_{\lambda=0}$, where $h$ is a holomorphic tuple such that $\{h=0\}=Z$, but it can also
be defined as $\lim_{\epsilon\to 0}\chi(|h|/\epsilon)\mu$; cf.\ \cite{AW3} and \cite[Lemma~6]{LS}.  
It follows that if $\mu=\pi_*\tau$, then $\mathbf{1}_Z\mu=\pi_*(\mathbf{1}_{\pi^{-1}(Z)}\tau)$.
Notice that a pseudomeromorphic current $\mu$ has the SEP if and only if $\mathbf{1}_Z \mu=0$ for all germs of 
analytic sets $Z$ with positive codimension.
We will denote by $\W_V$ the subsheaf of $\PM_V$ of currents with
the SEP. 
It is closed under multiplication by smooth
forms and if $\pi\colon \tilde{V}\to V$ is either a modification or a simple projection then $\pi_*\colon \W(\tilde{V})\to \W(V)$. 

A natural subclass of $\W(V)$ is the class of {\em almost semi-meromorphic currents} on $V$; a current
$\mu$ on $V$ is said to be almost semi-meromorphic if there is a smooth modification $\pi\colon \tilde{V}\to V$
and a semi-meromorphic current $\tilde{\mu}$ on $\tilde{V}$ such that
$\pi_* \tilde{\mu} = \mu$, see \cite{AS}. 
Notice that almost semi-meromorphic currents are generically smooth and have principal value-type singularities.
Let $\mu$ be an almost semi-meromorphic current. Following \cite{AW3},
we let $ZSS(\mu)$ (the Zariski-singular support of $\mu$) be the smallest Zariski-closed set
outside of which $\mu$ is smooth. 
The following result can be found in \cite{AW3}; the last part is \cite[Proposition~2.7]{AS}.
\begin{proposition}\label{multprop}
Let $a$ be an almost semi-meromorphic current on $V$ and let $\mu\in\PM(V)$. 
Then there is a unique pseudomeromorphic current $a\wedge \mu$ on $V$
that coincides with $a\wedge\mu$ outside of $ZSS(a)$ and such that
$\mathbf{1}_{ZSS(a)}a\wedge\mu=0$. 
If $\mu\in\W(V)$, then $a\wedge\mu\in\W(V)$.
\end{proposition}

If $\mu\in \PM(V_z)$ and $\nu\in\PM(W_{\zeta})$ then we will denote the current $(\mu\otimes 1)\wedge (1\otimes \nu)$
on $V_z\times W_{\zeta}$ by $\mu(z)\wedge\nu(\zeta)$, or sometimes $\mu\wedge\nu$ if there is no risk of confusion,
and refer to it as the \emph{tensor product} of $\mu$ and $\nu$. From \cite{AW3} we have that
$\mu(z)\wedge\nu(\zeta)\in \PM(V\times W)$ and that $\mu(z)\wedge\nu(\zeta)\in \W(V\times W)$ if
$\mu\in\W(V)$ and $\nu\in\W(W)$.

We will also have use for the following slight variation of \cite[Theorem~1.1 (ii)]{Anot}.

\begin{proposition}\label{matssats}
Let $Z\subset V$ be a pure dimensional analytic subset and let $\J_Z\subset \hol_V$
be the ideal sheaf of holomorphic functions vanishing on $Z$. 
Assume that $\tau\in \PM(V)$ has the SEP with respect to $Z$ and that $h \tau=dh\wedge \tau=0$ for all $h\in \J_Z$.
Then there is a current $\mu\in \PM(Z)$ with the SEP such that $\iota_* \mu = \tau$,
where $\iota\colon Z\hookrightarrow V$ is the inclusion.
\end{proposition}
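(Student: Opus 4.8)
The plan is to reduce the statement to the known result \cite[Theorem~1.1~(ii)]{Anot} (or its proof) by a standard local argument, the only subtlety being to pass from the hypothesis ``$h\tau = dh\wedge\tau = 0$ for all $h\in\J$'' together with the SEP with respect to $Z$ to the conclusion that $\tau$ is the pushforward of a pseudomeromorphic current living intrinsically on $Z$. First I would localize: the statement is local on $V$, so after shrinking we may assume $V$ is embedded in a pseudoconvex domain $D\subset\C^N$ and $Z\subset V$ is a pure dimensional analytic subset. Since $\tau\in\PM(V)$ has support contained in $Z$ (this follows from $h\tau = 0$ for all $h\in\J$, because a pseudomeromorphic current annihilated by such $h$ must be supported on $\{h=0, \ \forall h\in\J\} = Z$), the pushforward $\iota_*^{-1}$ question is really about whether $\tau$ ``lives on $Z$'' in the sheaf-theoretic sense, i.e.\ whether it extends the structure sheaf action on $Z$.

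The key step is the following: by the dimension principle and the support condition, together with $h\tau = dh\wedge\tau = 0$ for $h\in\J$, one knows that $\tau$ is, in a suitable sense, a current on the smooth locus $Z_{reg}$ extended by the SEP. More precisely, on $V\setminus Z_{sing}$ the analytic set $Z$ is a submanifold, and a pseudomeromorphic current supported on the submanifold $Z_{reg}$ and annihilated by the ideal of $Z_{reg}$ and its differentials is exactly the pushforward $\iota_* \mu_0$ of a pseudomeromorphic current $\mu_0$ on $Z_{reg}$ — this is the content of \cite[Theorem~1.1~(ii)]{Anot} in the smooth ambient case, or can be seen directly in local coordinates where $Z_{reg} = \{w_1 = \cdots = w_p = 0\}$: the conditions $w_j\tau = 0$ and $dw_j\wedge\tau = 0$ force $\tau$ to contain no $d\bar w_j$ or $dw_j$ and, by Schwartz's theorem on currents supported on a submanifold combined with the pseudomeromorphic structure, $\tau = \iota_*\mu_0$. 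Then $\mu_0\in\PM(Z_{reg})$ and it has the SEP on $Z_{reg}$ because $\tau$ has the SEP with respect to $Z$.

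The main obstacle — and the reason this is stated as a ``slight variation'' rather than a direct citation — is extending $\mu_0$ from $Z_{reg}$ across $Z_{sing}$ to a current $\mu\in\PM(Z)$ with the SEP, and checking that $\iota_*\mu = \tau$ on all of $V$ (not just over $V\setminus Z_{sing}$). For the extension I would argue as follows: take a holomorphic tuple $h'$ on $Z$ cutting out $Z_{sing}$; then $\chi(|h'|/\epsilon)\mu_0$ has a limit $\mu$ as $\epsilon\to 0$ in the sense of currents, because pulling everything back to a resolution $\pi\colon\tilde Z\to Z$ turns $\mu_0$ into (pushforwards of) elementary pseudomeromorphic currents, for which such limits exist; this gives $\mu\in\PM(Z)$, and $\mu$ has the SEP by construction of its defining limit. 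Finally, $\iota_*\mu$ and $\tau$ are both pseudomeromorphic currents with the SEP with respect to $Z$ on $V$ and they agree on $V\setminus Z_{sing}$ (where both equal $\iota_*\mu_0$); since two currents with the SEP that agree outside a proper subvariety — here $Z_{sing}$, which has positive codimension in $Z$ — must be equal, we conclude $\iota_*\mu = \tau$. The one point requiring care is that the SEP with respect to $Z$ is exactly the right hypothesis to run this last uniqueness argument, and that the regularization $\chi(|h'|/\epsilon)$ interacts correctly with $\iota_*$, which follows since $\iota^* h' $ extends $h'$ and $\iota_*$ commutes with such multiplications in the limit.
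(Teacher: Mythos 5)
Your overall strategy---reduce to a Schwartz-type statement that a pseudomeromorphic current annihilated by the ideal of $Z$ and its differentials is a pushforward from $Z$, and then invoke \cite[Theorem~1.1]{Anot}---is the right one, but the step where you extend $\mu_0$ from $Z_{reg}$ across $Z_{sing}$ contains a genuine gap. You assert that $\chi(|h'|/\epsilon)\mu_0$ converges as $\epsilon\to 0$ ``because pulling everything back to a resolution turns $\mu_0$ into pushforwards of elementary pseudomeromorphic currents, for which such limits exist.'' But membership in $\PM(Z_{reg})$ only gives a \emph{locally finite} sum of such pushforwards on the open set $Z_{reg}$; it gives no control whatsoever on the behaviour near $Z_{sing}$, and an arbitrary element of $\PM(Z_{reg})$ need not admit any $\chi_\epsilon$-limit (think of an infinite sum of point masses with rapidly growing coefficients accumulating at $Z_{sing}$). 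The convergence you need is true, but for a different reason: the hypothesis that $\tau$ has the SEP with respect to $Z$ gives $\chi_\epsilon\tau\to\tau$, and $\chi_\epsilon\tau=\iota_*(\chi_\epsilon\mu_0)$ over $Z_{reg}$; since each $\iota_*(\chi_\epsilon\mu_0)$ annihilates every test form $\xi$ with $\iota^*\xi=0$ on $Z_{reg}$, so does the limit $\tau$, and this is exactly what produces a current $\mu$ on all of $Z$ with $\iota_*\mu=\tau$ in one stroke. This is how the paper argues; once $\mu$ exists as a current on $Z$, the statements that $\mu\in\PM(Z)$ and that $\mu$ has the SEP are supplied by \cite[Theorem~1.1~(ii)]{Anot} rather than ``by construction'' (note also that convergence of $\chi(|h'|/\epsilon)\mu$ for the one tuple $h'$ cutting out $Z_{sing}$ is not yet the full SEP, which quantifies over all proper hypersurfaces).

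A secondary point: your local-coordinate argument presupposes that near points of $Z_{reg}$ one can choose coordinates on $V$ in which $Z$ is $\{w_1=\cdots=w_p=0\}$, but $Z_{reg}$ may well meet $V_{sing}$, where no such coordinates exist. The paper sidesteps this by first pushing $\tau$ forward under $i\colon V\hookrightarrow D$ into the smooth ambient domain (using \cite[Theorem~1.1~(i)]{Anot} to keep pseudomeromorphicity), checking that the hypotheses persist, and only then running the argument with $V$ smooth. You should add this reduction before invoking any coordinate description of $Z$.
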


\begin{proof}
Let $i\colon V\hookrightarrow D$ be the inclusion. By \cite[Theorem~1.1 (i)]{Anot} we have that 
$i_*\tau \in \PM(D)$. It is straightforward to verify that $i_*\tau$ has the SEP with respect to 
$Z$ considered now as a subset of $D$ and that $h i_*\tau=dh\wedge i_*\tau=0$ for all $h\in \J_Z$, where we now consider $\J_Z$
as the ideal sheaf of $Z$ in $D$. Hence, it is sufficient to show the proposition when $V$ is smooth.
To this end, we will see that there is a current $\mu$ on $Z$ such that $\iota_*\mu=\tau$; then 
the proposition follows from \cite[Theorem~1.1 (ii)]{Anot}.

The existence of such a $\mu$ is equivalent to that $\tau.\xi=0$ for all test forms $\xi$
such that $\iota^*\xi=0$ on $Z_{reg}$. By, e.g., \cite[Proposition~2.3]{AS} and the assumption on $\tau$
it follows that $\bar{h}\tau=d\bar{h}\wedge \tau=h\tau=dh\wedge \tau=0$ for every $h\in \J_Z$.
Using this it is straightforward to check that if $x\in Z_{reg}$ and $\xi$ is a smooth form such that 
$\iota^*\xi=0$ in a neighborhood of $x$, then $\xi\wedge \tau=0$ in a neighborhood of $x$. 
Thus, if $g$ is a holomorphic tuple in $V$ such that $\{g=0\}=Z_{sing}$, 
then $\chi(|g|/\epsilon)\tau.\xi=0$ for any test form $\xi$ such that $\iota^*\xi=0$ on $Z_{reg}$.
Since $\tau$ has the SEP with respect to $Z$ it follows that $\tau.\xi=0$ for all test forms $\xi$
such that $\iota^*\xi=0$ on $Z_{reg}$. 
\end{proof}

\medskip

\subsection{Residue currents}\label{ssec:rescurr}
We briefly recall the the construction in \cite{AW1} of a residue current 
associated to a generically exact complex of Hermitian vector bundles.

Let $\J_V$ be the radical ideal sheaf in $D$ 
associated with $V\subset D$. Possibly after shrinking $D$ somewhat there is a free resolution
\begin{equation}\label{eq:resolOJ}
0 \to \hol(E_m) \stackrel{f_m}{\longrightarrow} \cdots \stackrel{f_2}{\longrightarrow}
\hol(E_1) \stackrel{f_1}{\longrightarrow} \hol(E_0)
\end{equation}
of $\hol_{D}/\J_V$, where $E_k$ are trivial vector bundles, $E_0$ is the trivial line bundle, $f_k$ are holomorphic mappings,
and $m\leq N$.
The resolution \eqref{eq:resolOJ} induces a complex of vector bundles
\begin{equation*}
0\to E_m \stackrel{f_m}{\longrightarrow} \cdots \stackrel{f_2}{\longrightarrow} E_1
\stackrel{f_1}{\longrightarrow} E_0
\end{equation*}
that is pointwise exact outside $V$. 
For $r\geq 1$, let 
$V^r$ be the set where $f_{\kappa+r}\colon E_{\kappa+r}\to E_{\kappa+r-1}$ does not have optimal 
rank\footnote{For $j\leq \kappa$, the set where $f_j$ does not have optimal rank is $V$.}, and let $V^0:=V_{sing}$.
Then
\begin{equation}\label{ballong}
\cdots \subset V^{k+1} \subset V^k \subset \cdots \subset V^1 \subset V^0 \subset V.
\end{equation} 
By the uniqueness of minimal free resolutions, these sets are in fact independent of the choice of resolution 
\eqref{eq:resolOJ} of $\hol_V=\hol_D/\J_V$, i.e., they are invariants of that sheaf,
and they somehow measure the singularities of $V$. 
Since $V$ has pure dimension it follows from \cite[Corollary~20.14]{Eis} that
\begin{equation*}
\textrm{dim}\, V^r < n-r, \quad r\geq 0.
\end{equation*}
Hence, $V^n=\emptyset$ and so $f_N$ has optimal rank everywhere; we
may thus assume that $m\leq N-1$ in \eqref{eq:resolOJ}.
Recall that $V$ is Cohen-Macaulay if and only if there a
resolution \eqref{eq:resolOJ} with $m=\kappa$ of $\hol_V$, see, e.g., \cite[Chapter~18]{Eis}. Notice that
$V^r=\emptyset$ for $r\geq 1$ if and only if $V$ is Cohen-Macaulay.

\smallskip

Assuming $V$ has positive codimension, given Hermitian metrics on the $E_j$, following \cite{AW1}, one can construct a 
smooth form $u=\sum_{k\geq 1} u_k$ in $D\setminus V$, where $u_k$ is a $(0,k-1)$-form taking values in $E_k$,
such that 
\begin{equation}\label{nablau}
f_1u_1=1, \quad f_{k+1}u_{k+1}=\debar u_k,\, k=1,\ldots,m-1, \quad \debar u_m=0 \quad \textrm{in} \,\, D\setminus V.
\end{equation}
The form $u$ has an extension as an almost semimeromorphic current 
\begin{equation}\label{gulU}
\lim_{\epsilon\to 0} \chi(|F|/\epsilon) u=: U=\sum_{k\geq 1} U_k, 
\end{equation}
where $F$ is a holomorphic tuple in $D$ vanishing on $V$ and $U_k$ is
a $(0,k-1)$-current taking values in $E_k$; one should think of $U$ as
a generalization of the meromorphic current $1/f$ in $D$ when 
$V=f^{-1}(0)$ is a hypersurface.\footnote{In \cite{AW} $U$ was originally defined as the analytic
continuation to $\lambda=0$ of $|F|^{2\lambda}u$. However, in view of \cite[Section~4]{AW3} 
this definition coincides with \eqref{gulU}, see also \cite[Lemma~6]{LS}.}
 The residue current $R=\sum_{k} R_k$ 
associated with $V$ is then
defined by
\begin{equation}\label{lajbans}
R_k=\debar U_k - f_{k+1}U_{k+1},\, k=1,\ldots,m-1, \quad R_m=\debar U_m.
\end{equation}
Hence, $R_k$ is a pseudomeromorphic $(0,k)$-current in $D$ with values in $E_k$, and from \eqref{nablau}
it follows that $R$ has support on $V$. By the dimension principle, thus $R=R_\kappa+\cdots+R_m$. 
Notice that if $V$ is Cohen-Macaulay and \eqref{eq:resolOJ} ends at
level $\kappa$, then $R=R_\kappa$ and
$\debar R=0$. By \cite[Theorem~1.1]{AW1} we have that if $h\in \hol_D$ then 
\begin{equation}\label{dualprincip}
h R = 0 \quad \textrm{if and only if} \quad h \in \J_V.
\end{equation}

\begin{example}\label{hy}
Let $V=f^{-1}(0)$ be a hypersurface in $D$. Then $0\to \hol(E_1)\stackrel{f}{\longrightarrow}\hol(E_0)$
is a resolution of $\hol/\langle f \rangle$, where $E_1$ and $E_0$ are auxiliary trivial line bundles.
The associated current $U$ then becomes $(1/f)\otimes e_1$, where $e_1$ is a holomorphic frame for $E_1$,
and the associated residue current $R$ is $\debar(1/f)\otimes e_1$.

Let $g_1,\ldots,g_\kappa\in \hol(D)$ be a regular sequence. Then the Koszul complex associated to the 
$g_j$ is a free resolution of $\hol_D/\langle g_1,\ldots,g_\kappa\rangle$. The associated residue
current $R$ then becomes the Coleff-Herrera product 
\begin{equation*}
\debar \frac{1}{g_1}\wedge \cdots \wedge \debar \frac{1}{g_\kappa}, 
\end{equation*}
introduced in \cite{CH}, 
times an auxiliary frame element, see \cite[Theorem~1.7]{ABullSci}. 
\hfill $\qed$
\end{example}

\subsection{Structure forms of a complex space}\label{ssec:structure}
Assume first that $V$ is a reduced hypersurface, i.e., $V=f^{-1}(0)\subset D\subset \C^N$, $N=n+1$, 
where $f\in \hol(D)$ and $df\neq 0$ on $V_{reg}$.
Let $\omega'$ be a meromorphic $(n,0)$-form in $D\subset \C^{n+1}_z$ such that 
\begin{equation*}
df\wedge \omega' = 2\pi i \, dz_1\wedge \cdots \wedge dz_{n+1} \quad \textrm{on} \quad V_{reg}.
\end{equation*}
Then $\omega:=i^*\omega'$, where $i\colon V\hookrightarrow D$ is the inclusion, is a meromorphic form on $V$ that is uniquely
determined by $f$; $\omega$ is the Poincar\'e residue of the meromorphic form 
$2\pi i dz_1\wedge \cdots \wedge dz_{n+1}/f(z)$. For brevity we will sometimes write $dz$ for 
$dz_1\wedge \cdots \wedge dz_N$. Leray's residue formula can be formulated as
\begin{equation}\label{leila}
\int \debar \frac{1}{f} \wedge dz \wedge \xi = 
\lim_{\epsilon\to 0}\int_V \chi(|h|/\epsilon)\omega\wedge i^* \xi,
\end{equation}
where $\xi$ is a $(0,n)$-test form in $D$, the left hand side is the action of $\debar(1/f)$ on 
$dz\wedge \xi$ and $h$ is a holomorphic tuple such that $\{h=0\}=V_{sing}$. 
If we consider $\omega$ as a meromorphic current on $V$ we can rephrase \eqref{leila} as
\begin{equation}\label{Leray}
\debar\frac{1}{f}\wedge dz = i_* \omega.
\end{equation}

\smallskip

Assume now that $V\stackrel{i}{\hookrightarrow} D\subset \C^N$ is an arbitrary pure $n$-dimensional analytic 
subset. From Section~\ref{ssec:rescurr} we have, 
given a free resolution \eqref{eq:resolOJ} of $\hol_D/\J_V$ and a choice of Hermitian metrics on the involved 
bundles $E_j$, the associated residue current $R$ that plays the role
of $\debar(1/f)$. By the following result, which is an abbreviated version of \cite[Proposition~3.3]{AS}, 
there is an almost semi-meromorphic current $\omega$ on $V$ such that $R\wedge dz=i_*\omega$; such a current
will be called a \emph{structure form} of $V$.

\begin{proposition}\label{structureprop}
Let \eqref{eq:resolOJ} be a Hermitian free resolution of $\hol_D/\J_V$ in $D$ and let $R$ be the associated residue
current. Then there is a unique almost semi-meromorphic current
\begin{equation*}
\omega=\omega_0+\omega_1 +\cdots + \omega_{n-1}
\end{equation*}
on $V$, where $\omega_r$ is smooth on $V_{reg}$, has bidegree $(n,r)$, and takes values in $E_{\kappa+r}|_V$, such that 
\begin{equation}\label{strut}
R\wedge dz_1\wedge \cdots \wedge dz_N = i_* \omega.
\end{equation}
Moreover, 
\begin{equation*}
f_\kappa|_V \omega_0 =0, \quad f_{\kappa+r}|_V \omega_r = \debar\omega_{r-1}, \quad r\geq 1,
\end{equation*} 
in the sense of currents on $V$, and 
there are $(0,1)$-forms $\alpha_k$, $k\geq 1$, that are smooth outside $V^k$ and that take values in 
$\textrm{Hom}(E_{\kappa+k-1}|_V,E_{\kappa+k}|_V)$, such that 
\begin{equation*}
\omega_k=\alpha_k \omega_{k-1}, \quad k\geq 1.
\end{equation*}
\end{proposition}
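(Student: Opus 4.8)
The plan is to obtain the structure form $\omega$ by pushing the residue current $R$ down to $V$ via Proposition~\ref{matssats}, after tensoring with $dz$, and then to extract the claimed finer structure from the explicit description of $R$ in terms of the form $u$ from \eqref{nablau}. First I would note that $R\wedge dz$ is a pseudomeromorphic current in $D$ with support on $V$ and values in $\oplus_k E_k$; since the ideal sheaf $\J_V$ is radical and annihilates $R$ by \eqref{dualprincip}, one checks that $h\,(R\wedge dz)=dh\wedge(R\wedge dz)=0$ for all $h\in\J_V$ (the $dh$-part follows from the standard fact, cited in the proof of Proposition~\ref{matssats}, that annihilation by $\J_V$ forces annihilation by $d\J_V$ too), and that $R\wedge dz$ has the SEP. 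Hence Proposition~\ref{matssats} yields a unique current $\omega\in\PM(V)$ with the SEP such that $i_*\omega=R\wedge dz$, which is \eqref{strut}. Uniqueness is immediate from the SEP since two such currents agree on $V_{reg}$.

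Next I would establish the bidegree and bundle decomposition. Since $R=R_p+\cdots+R_{n-1}$ with $R_{p+r}$ of bidegree $(0,r)$ and values in $E_{p+r}$, the piece $\omega_r$ with $i_*\omega_r=R_{p+r}\wedge dz$ has bidegree $(n,r)$ and values in $E_{p+r}|_V$. The relations $f_p|_V\omega_0=0$ and $f_{p+r}|_V\omega_r=\debar\omega_{r-1}$ are obtained by pushing forward the defining identities for $R$: from $R_p=-f_{p+1}U_{p+1}+\debar U_p$ with $f_pU_p=\debar U_{p-1}$ one gets $f_p R_p=0$ (as $f_pf_{p+1}=0$), and $\debar R_{p+r-1}=f_{p+r}R_{p+r}$ modulo terms that vanish on $V$; wedging with $dz$ and using that $i_*$ intertwines $\debar$ gives the stated equations on $V$. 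The almost semi-meromorphicity of $\omega$ follows because $R\wedge dz$ is, on a log-resolution $\pi\colon\tilde D\to D$ along which $F^{-1}(0)$ becomes normal crossings, the pushforward of semi-meromorphic forms times residue factors in a single variable, so after restricting to $V$ and composing modifications one realizes $\omega$ as the pushforward under a smooth modification of $V$ of a semi-meromorphic current; alternatively one invokes directly the fact quoted in the excerpt that almost semi-meromorphic currents are closed under such operations.

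The factorization $\omega_k=\alpha_k\omega_{k-1}$ with $\alpha_k$ smooth off $V^k$ and $\mathrm{Hom}(E_{p+k-1}|_V,E_{p+k}|_V)$-valued is, I expect, the main obstacle and the part that genuinely uses the construction in \cite{AW1}. The idea is that the smooth form $u$ solving \eqref{nablau} is built iteratively: $u_{p+k}$ is gotten from $u_{p+k-1}$ by applying a smooth $\mathrm{Hom}(E_{p+k-1},E_{p+k})$-valued form $\sigma_{k}$ (roughly a minimal inverse of $f_{p+k}$ twisted by a Koszul/Cauchy-type factor) together with a $\debar$; correspondingly $U_{p+k}$ and hence $R_{p+k}$ is obtained from $R_{p+k-1}$ by multiplication with an almost semi-meromorphic form $\alpha_k$ that is smooth precisely where $f_{p+k}$ has optimal rank, i.e.\ outside $V^k$, plus a $\debar$-exact piece that must be shown to drop out after wedging with $dz$ and passing to $V$. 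Making this precise requires tracking the homotopy formulas of \cite{AW1} carefully and using the dimension principle (together with $\dim V^k<n-k$) to kill the unwanted lower-bidegree contributions; once $R_{p+k}\wedge dz=i_*(\alpha_k\omega_{k-1})$ on $V_{reg}$ is verified, the SEP of both sides upgrades it to an identity on all of $V$. The remaining bookkeeping — smoothness of $\omega_r$ on $V_{reg}$, uniqueness — then follows routinely, and the whole proposition reduces to reading off \cite[Proposition~3.3]{AS}, of which this is an abbreviated version.
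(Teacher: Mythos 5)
The paper does not actually prove this proposition; it is quoted (in abbreviated form) from \cite[Proposition~3.3]{AS}, so there is no in-paper argument to compare against. Your outline is broadly consistent with the strategy of the cited proof: realize $R\wedge dz$ as $i_*\omega$ for an intrinsic current $\omega$ on $V$, read off the bidegree and bundle decomposition and the relations $f_{p+r}|_V\,\omega_r=\debar\omega_{r-1}$ from the defining identities $R_k=\debar U_k-f_{k+1}U_{k+1}$ together with $f_kf_{k+1}=0$, and obtain the $\alpha_k$ from the (almost semi-meromorphic) minimal inverses $\sigma_{p+k}$ of $f_{p+k}$, which are smooth exactly where $f_{p+k}$ has optimal rank, i.e.\ outside $V^k$.

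Two substantive points, though. First, your ordering is circular: to invoke Proposition~\ref{matssats} you need $R\wedge dz$ to have the SEP with respect to $V$, but for the components $R_{p+r}$ with $r\geq 1$ this SEP is itself proved \emph{via} the factorization $R_{p+r}=\alpha_r R_{p+r-1}$ combined with the dimension principle and the estimate $\textrm{codim}\, V^r>p+r$; the dimension principle alone handles only $r=0$, since $V\cap\{h=0\}$ has codimension merely $p+1$ while $R_{p+r}$ has bidegree $(0,p+r)$. So the factorization has to be established before, not after, the reduction to $V$. Second, that factorization --- which you rightly identify as the main content --- is only gestured at: one must actually carry out the bookkeeping with the $\sigma_k$ and the homotopy formulas of \cite{AW1}, and check that the $\debar$-exact remainders disappear after wedging with $dz$; this is exactly the part of \cite[Proposition~3.3]{AS} that cannot be reduced to the general facts already quoted in this paper. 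A minor remark: $dh\wedge(R\wedge dz)=0$ holds for trivial degree reasons, since $dz$ already has full holomorphic degree $N$; no annihilation-by-$d\J_V$ argument is needed there.
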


It is sometimes useful to reformulate \eqref{strut} suggestively as
\begin{equation}\label{strut2}
R\wedge dz_1\wedge \cdots \wedge dz_N = \omega\wedge [V],
\end{equation}
where $[V]$ is the current of integration along $V$.

The following result will be useful for us when defining our dualizing complex.

\begin{proposition}[Lemma~3.5 in \cite{AS}]\label{omegadiv}
If $\psi$ is a smooth $(n,q)$-form on $V$, then there is a smooth $(0,q)$-form $\psi'$ on $V$ with 
values in $E_p^*|_V$ such that $\psi=\omega_0\wedge \psi'$.
\end{proposition}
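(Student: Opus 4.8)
The plan is to prove Proposition~\ref{omegadiv} by a local argument on $V_{reg}$ first and then extend across $V_{sing}$ using that the currents in question have the SEP. To set up the local statement, recall from Proposition~\ref{structureprop} that $\omega_0$ is smooth of bidegree $(n,0)$ on $V_{reg}$ and takes values in $E_p|_V$. Since $E_0$ is the trivial line bundle and $f_p\colon E_p\to E_{p-1}$ has optimal rank precisely outside $V=V^{-1}\cup\cdots$, one sees that on $V_{reg}$ the relation \eqref{strut} (equivalently \eqref{strut2}) forces $\omega_0$ to be, up to an invertible holomorphic factor, the Poincar\'e-residue-type form $dz/df$-analogue; in particular $\omega_0$ is nonvanishing as a section of $E_p|_V$ at every point of $V_{reg}$. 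More precisely, the key pointwise fact I would isolate is: \emph{at each $x\in V_{reg}$, the vector $\omega_0(x)\in (E_p)_x\otimes\bigwedge^{n}T^*_xV$ spans a free rank-one $\hol_{V,x}$-direction}, because $R_p\wedge dz$ is generically a nonzero $(n,0)$-current with values in $E_p$ and the structure form identity \eqref{strut2} identifies it with $\omega_0$ times integration along $V$.

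First I would work on $V_{reg}$. There, locally choose a holomorphic frame $e$ for the line bundle $E_0$ (which is trivial) and recall that $f_p$ surjects onto $\ker f_{p-1}$; the relevant statement is that $\omega_0$, as an $(n,0)$-form with values in $E_p|_{V_{reg}}$, generates the $\hol_{V_{reg}}$-module of such forms that pair correctly under the resolution. Concretely: given a smooth $(n,q)$-form $\psi$ on $V$, restrict to $V_{reg}$, write $\psi = \beta\wedge dz_I$-type expression locally, and use that $\omega_0$ is a nonvanishing holomorphic $(n,0)$-form valued in a line-subbundle $L\subset E_p|_{V_{reg}}$ to define $\psi' := \langle\,\cdot\,,\omega_0\rangle$-type contraction — more precisely, pick a smooth section $\sigma$ of $E_p^*|_{V_{reg}}$ with $\langle\sigma,\omega_0\rangle$ equal to a fixed nonvanishing holomorphic $(n,0)$-form $\eta$, and set $\psi' = (\psi/\eta)\,\sigma$ where $\psi/\eta$ is the well-defined smooth $(0,q)$-function quotient. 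Then $\omega_0\wedge\psi' = (\psi/\eta)\,\langle\omega_0,\sigma\rangle = \psi$ on $V_{reg}$ after adjusting so the pairing lands in the line bundle $E_0$ correctly. This gives existence of a smooth $\psi'$ on $V_{reg}$ with $\omega_0\wedge\psi'=\psi$ there; one must also check $\psi'$ is smooth \emph{on} $V$ in the sense of the paper, i.e.\ extends from $V_{reg}$ — here I would use that $\psi$ itself is smooth on $V$ and $\omega_0$ is almost semi-meromorphic and nonvanishing (in the line-bundle direction) near a generic point, then argue the quotient extends.

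The main obstacle is precisely the behavior across $V_{sing}$: a priori $\psi'$ constructed on $V_{reg}$ could blow up. To handle this I would invoke Proposition~\ref{structureprop} and Proposition~\ref{multprop}: the point is that the constraints $f_p|_V\omega_0=0$ and the fact that $\omega_0$ is almost semi-meromorphic with principal value-type singularities mean that one can choose the "splitting" $\sigma\in E_p^*|_V$ to be smooth on all of $V$ (not just $V_{reg}$), because $E_p^*$ is a trivial bundle over $D$, hence its sections restrict to smooth sections over $V$ in the intrinsic sense. Then $\psi' := $ the appropriate contraction of $\psi$ against this global smooth $\sigma$ is manifestly smooth on $V$, and the identity $\omega_0\wedge\psi'=\psi$, valid on $V_{reg}$, holds on all of $V$ because both sides are smooth $(n,q)$-forms on $V$ agreeing on the dense open $V_{reg}$ — and smooth forms on $V$ are determined by their restriction to $V_{reg}$. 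So the real content reduces to: \emph{produce one smooth section $\sigma$ of $E_p^*|_V$ with $\langle\omega_0,\sigma\rangle$ a fixed nonvanishing holomorphic $(n,0)$-form $\eta$ on $V$} (say $\eta = dz$ pulled back suitably, or a local generator of $\mathit{\Omega}^n$ on $V_{reg}$). This is where I expect to lean on the explicit description in Section~\ref{ssec:structure} of $\omega_0$ via the residue current $R_p$ and the relation \eqref{strut}; in the hypersurface case it is transparent ($\omega_0=\omega$, the Poincar\'e residue, and $\sigma$ is dual to the generating section of $E_p=E_1$), and the general case follows by the same bookkeeping using that $f_p$ is surjective onto its image sheaf. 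I would then remark that since the statement is local, it suffices to do this near each point, and paracompactness/existence of the global structure form is not needed.
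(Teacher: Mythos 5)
First, a point of order: the paper does not prove this proposition; it is imported verbatim from \cite{AS} (Lemma~3.5 there), so there is no in-paper argument to compare against. Judged on its own merits, your proposal is fine on $V_{reg}$ but has a genuine gap exactly where the content of the lemma lies, namely across $V_{sing}$.

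The problematic step is the reduction to a \emph{single} pair $(\sigma,\eta)$: a smooth section $\sigma$ of $E_p^*|_V$ with $\eta=\langle\omega_0,\sigma\rangle$ a ``nonvanishing'' holomorphic $n$-form, so that $\psi'=(\psi/\eta)\,\sigma$. Near a singular point the $\mathcal{E}^{0,0}_V$-module of smooth $(n,0)$-forms on $V$ (restrictions of ambient smooth forms) is generated by the $i^*dz_I$, $|I|=n$, and is in general \emph{not} principal, so no such $\eta$ exists and the quotient $\psi/\eta$ is not smooth. Concretely, for the cusp $V=\{z_1^2=z_2^3\}\subset\C^2$ one has, up to nonzero constants, $\omega_0=z_2^{-2}\,dz_1\otimes e_1=z_1^{-1}\,dz_2\otimes e_1$ on $V_{reg}$, hence $i^*dz_1=\omega_0\wedge(c_1z_2^2\,e_1^*)$ and $i^*dz_2=\omega_0\wedge(c_2z_1\,e_1^*)$; in the normalization $z_1=t^3$, $z_2=t^2$ the ratios $z_1/z_2^2=1/t$ and $z_2^2/z_1=t$ are not restrictions of smooth ambient functions, so neither $i^*dz_1$ nor $i^*dz_2$ generates the other, and a fortiori no single $\langle\omega_0,\sigma\rangle$ with $\sigma$ smooth generates the module. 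Your justification that one can ``choose the splitting $\sigma$ to be smooth on all of $V$ because $E_p^*$ is a trivial bundle'' is a non sequitur: triviality says nothing about the existence of a section with the required divisibility property. The argument that actually works decomposes $\psi=\sum_{|I|=n}\alpha_I\wedge i^*dz_I$ with $\alpha_I$ smooth $(0,q)$-forms and shows that \emph{each} $i^*dz_I$ equals $\omega_0\wedge\xi_I$ for a (different, in general) \emph{holomorphic} $E_p^*$-valued $\xi_I$; this is where the residue-theoretic input enters (via \eqref{strut} and the identification of the $\debar$-closed elements of $\A_V^{n,0}$ of this type with $\mathscr{E}xt^{p}(\hol_D/\J,\mathit{\Omega}^N_D)$ realized by $h\mapsto\omega_0\cdot h$, cf.\ the proof of Theorem~\ref{thm:resol} and \cite{ANoetherDual}), and it is precisely the step you defer to ``the same bookkeeping''. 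A minor further point: $\omega_0\wedge\psi'$ is not a smooth form on $V$ but only an element of $\W_V$ by Proposition~\ref{multprop}, so the passage from equality on $V_{reg}$ to equality on $V$ should be phrased via the SEP of both sides rather than via smoothness.
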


\subsection{Koppelman formulas in $\C^N$}\label{ssec:koppelCn}
We recall some basic constructions from \cite{AintrepI} and \cite{AintrepII}.
Let $D\Subset\C^N$ be a domain (not necessarily pseudoconvex at this point), let $k(z,\zeta)$ be an integrable
$(N,N-1)$-form in $D\times D$, and let $p(z,\zeta)$ be a smooth $(N,N)$-form in $D\times D$. Assume that
$k$ and $p$ satisfy the equation of currents 
\begin{equation}\label{currentKoppel}
\debar k(z,\zeta) = [\Delta^D] - p(z,\zeta)
\end{equation}
in $D\times D$, where $[\Delta^D]$ is the current of integration along the diagonal.
Applying this current equation to test forms $\psi(z)\wedge \varphi(\zeta)$ it is straightforward to verify that for
any compactly supported $(p,q)$-form $\varphi$ in $D$ one has the following Koppelman formula
\begin{equation*}
\varphi(z) = \debar_{z} \int_{D_{\zeta}} k(z,\zeta)\wedge \varphi(\zeta) + 
\int_{D_{\zeta}} k(z,\zeta)\wedge \debar\varphi(\zeta) + \int_{D_{\zeta}} p(z,\zeta)\wedge \varphi(\zeta).
\end{equation*}

In \cite{AintrepI} Andersson introduced a very flexible method of producing solutions to \eqref{currentKoppel}.
Let $\eta=(\eta_1,\ldots,\eta_N)$ be a holomorphic tuple in $D\times D$ that defines the diagonal and let 
$\Lambda_{\eta}$ be the exterior algebra spanned by $T^*_{0,1}(D \times D)$
and the $(1,0)$-forms $d\eta_1,\ldots,d\eta_N$. On forms with values in $\Lambda_{\eta}$ interior multiplication
with $2\pi i \sum\eta_j \partial/\partial \eta_j$, denoted $\delta_{\eta}$, is defined; put $\nabla_{\eta}=\delta_{\eta}-\debar$.

Let $s$ be a smooth $(1,0)$-form in $\Lambda_{\eta}$ such that $|s|\lesssim |\eta|$
and $|\eta|^2\lesssim |\delta_{\eta}s|$ and let $B=\sum_{k=1}^N s\wedge (\debar s)^{k-1}/(\delta_{\eta}s)^k$.
It is proved in \cite{AintrepI} that then $\nabla_{\eta} B = 1-[\Delta^D]$. Identifying terms of top degree
we see that $\debar B_{N,N-1} = [\Delta^D]$ and we have found a solution to \eqref{currentKoppel}.
For instance, if we take $s=\partial |\zeta-z|^2$ and $\eta=\zeta-z$, then the resulting $B$ is sometimes called
the full Bochner-Martinelli form and the term of top degree is the classical Bochner-Martinelli kernel. 

A smooth section $g(z,\zeta)=g_{0,0}+\cdots +g_{N,N}$ of $\Lambda_{\eta}$, defined for $z\in D_1\subset D$
and $\zeta\in D_2\subset D$, such that $\nabla_{\eta} g=0$ and
$g_{0,0}|_{\Delta^D\cap D'} = 1$, where $D':=D_1\cap D_2$, is called a
\emph{weight} in $D_1\times D_2$.
 It follows that $\nabla_{\eta} (g\wedge B) = g-[\Delta^D]$ and, identifying 
terms of bidegree $(N,N-1)$, we get that
\begin{equation}\label{gulp}
\debar (g\wedge B)_{N,N-1} = [\Delta^D] - g_{N,N}
\end{equation} 
in $D'\times D'$.  Hence $(g\wedge B)_{N,N-1}$ and $g_{N,N}$ give a
solution to \eqref{currentKoppel} in $D'\times D'$.

If $D$ is pseudoconvex and $K$ is a holomorphically convex compact
subset, then one can find a weight $g$ in $D'\times D$ for some 
neighborhood $D'\subset D$ of $K$ such that $z\mapsto g(z,\zeta)$ is holomorphic 
in $D'$, which in particular means that there are no differentials of
the form 
$d\bar z_j$, and $\zeta \mapsto g(z,\zeta)$ has compact support in $D$;
see, e.g., Example~2 in \cite{AintrepII}.

\subsection{Koppelman formulas for $(0,q)$-forms on a complex space}\label{ssec:koppel0qX}
We briefly recall from \cite{AS} the construction of Koppelman formulas for $(0,q)$-forms 
on $V\subset D$. The basic idea is to use the currents $U$ and $R$ discussed in Section~\ref{ssec:rescurr} 
to construct a weight that will yield an integral formula of division/interpolation type in the same spirit
as in, e.g., \cite{BoB, P}. 

\smallskip

Let \eqref{eq:resolOJ} be a resolution of $\hol_{D}/\J_V$, where as before $\J_V$ is the sheaf in
$D$ associated to $V\stackrel{i}{\hookrightarrow} D$. 
One can find, see \cite[Proposition~5.3]{AintrepII}, holomorphic 
$\Lambda_\eta$-valued Hefer morphisms $H_k^{\ell}\colon E_k\to
E_{\ell}$ of bidegree $(k-\ell,0)$ such that 
$H_k^k=I_{E_k}$ and 
\begin{equation}\label{retur}
\delta_{\eta} H_k^{\ell} = H_{k-1}^{\ell} f_k(\zeta) - f_{\ell+1}(z)H_k^{\ell+1}, \quad k>1.
\end{equation}

Let $F$ be  a holomorpic
tuple in $D$ such that $\{F=0\}=V$, let 
$U^{\epsilon}=\chi(|F|/\epsilon) u$, and let 
\begin{equation*}
R^\epsilon:=1-\sum f_k U_k^\epsilon + \dbar U^\epsilon, 
\end{equation*}
so that $R^\epsilon=\sum_{k\geq 0} R_k^\epsilon$, where $R_0^\epsilon=1-
\chi(|F|/\epsilon)$ and 
$R_k^{\epsilon}=\dbar\chi(|F|/\epsilon) \wedge u$ for
$k\geq 1$. 
Then $\lim_{\epsilon\to 0}U^\epsilon = U$ and $\lim_{\epsilon\to
  0}R^\epsilon = R$, cf.\ \eqref{gulU} and \eqref{lajbans}, and 
moreover
\begin{equation}\label{glambda}
\gamma^{\epsilon} := \sum_{k=0}^N H^0_k R^{\epsilon}_k(\zeta) + f_1(z) \sum_{k=1}^N H^1_k U^{\epsilon}_k(\zeta).
\end{equation}
is a weight in $D'\times D'$ for $\epsilon > 0$.
Let $g$ be an arbitrary weight in $D'\times D'$. Then
$\gamma^{\epsilon}\wedge g$ is again a weight in $D'\times D'$ and we get
\begin{equation}\label{plast}
\debar (\gamma^{\epsilon}\wedge g\wedge B)_{N,N-1} = [\Delta^D] - (\gamma^{\epsilon}\wedge g)_{N,N}
\end{equation}
in the current sense in $D'\times D'$, cf.\ \eqref{gulp}. Let us 
proceed formally and, also, let us temporarily assume that $V$ is
Cohen-Macaulay and that \eqref{eq:resolOJ} ends at level $\kappa$, so that $R$ is $\debar$-closed.
Then, multiplying \eqref{plast} with $R(z)\wedge dz$
and using \eqref{dualprincip} so that $f_1(z)R(z)=0$, we get that
\begin{equation}\label{tre}
\debar \left(R(z)\wedge dz\wedge (HR^{\epsilon}(\zeta)\wedge g\wedge B)_{N,N-1}\right) = 
R(z)\wedge dz\wedge [\Delta^D] - R(z)\wedge dz\wedge(HR^{\epsilon}(\zeta)\wedge g)_{N,N},
\end{equation}
where $HR^{\epsilon} = \sum_{k=0}^N H^0_k R^{\epsilon}_k$, cf.\ \eqref{glambda}.
In view of \eqref{strut2} we have $R(z)\wedge dz\wedge [\Delta^D]=\omega\wedge [\Delta^V]$,
where $[\Delta^V]$ is the integration current along the diagonal $\Delta^V \subset V\times V \subset D\times D$,
and formally letting $\epsilon\to 0$ in \eqref{tre} we thus get
\begin{equation}\label{plutt}
\debar \Big( \omega(z)\wedge [V_z]\wedge (HR(\zeta)\wedge g\wedge B)_{N,N-1} \Big) = 
\omega\wedge [\Delta^V] - \omega(z)\wedge [V_z]\wedge (HR(\zeta)\wedge g)_{N,N}.
\end{equation}
To see what this means we will use \eqref{strut2}. Notice first that,
since $H$, $R$, $g$, and $B$ takes values in $\Lambda_\eta$, one can factor out $d\eta=d\eta_1\wedge \cdots \wedge d\eta_N$
from $(HR(\zeta)\wedge g\wedge B)_{N,N-1}$ and $(HR(\zeta)\wedge g)_{N,N}$.
After making these factorization in \eqref{plutt} we may replace $d\eta$ by 
$C_{\eta}(z,\zeta)d\zeta$, where $C_{\eta}(z,\zeta)=N!\det(\partial \eta_j/\zeta_k)$, 
since $\omega(z)\wedge [V_z]$ has full degree in $dz_j$. More precisely, 
let $\epsilon_1,\ldots,\epsilon_N$ be a basis for an auxiliary trivial complex vector bundle over $D\times D$
and replace all occurrences of 
$d\eta_j$ in $H$, $g$, and $B$ by $\epsilon_j$. Denote the resulting forms
by $\hat{H}$, $\hat{g}$, and $\hat{B}$ respectively and let
\begin{equation}\label{eq:k}
k(z,\zeta)= C_{\eta}(z,\zeta)\epsilon_N^*\wedge \cdots \wedge \epsilon_1^* \lrcorner
\sum_{k=0}^n \hat{H}_{p+k}^0 \omega_k(\zeta) \wedge (\hat{g}\wedge \hat{B})_{n-k, n-k-1}
\end{equation}
\begin{equation}\label{eq:p}
p(z, \zeta) = C_{\eta}(z,\zeta)\epsilon_N^*\wedge \cdots \wedge \epsilon_1^* \lrcorner
\sum_{k=0}^n \hat{H}_{p+k}^0 \omega_k(\zeta) \wedge \hat{g}_{n-k, n-k}.
\end{equation}
Notice that $k$ and $p$ have bidegrees $(n,n-1)$ and $(n,n)$ respectively.
In view of \eqref{strut2} we can replace $(HR\wedge g\wedge B)_{N,N-1}$ and $(HR\wedge g)_{N,N}$ 
with $[V_{\zeta}]\wedge k(z,\zeta)$ and $[V_{\zeta}]\wedge p(z,\zeta)$
respectively in \eqref{plutt}. It follows that 
\begin{equation*}
\debar \big (\omega(z)\wedge k(z,\zeta)\big ) = \omega\wedge [\Delta^V] - \omega(z)\wedge p(z,\zeta)
\end{equation*}
holds in the current sense at least on $V_{reg}\times V_{reg}$.
The formal computations above can be made rigorous, see \cite[Section~5]{AS}, and
combined with Proposition~\ref{omegadiv} we get Proposition~\ref{koppelcurrent} below;
notice that $\omega=\omega_0$ and $\debar\omega=0$ since we are assuming that 
$V$ is Cohen-Macaulay and that \eqref{eq:resolOJ} ends at level $\kappa$.
 
The following result will be the starting point of the next section and it holds without
any assumption about Cohen-Macaulay.

\begin{proposition}[Lemma~5.3 in \cite{AS}]\label{koppelcurrent}
With $k(z,\zeta)$ and $p(z,\zeta)$ defined by \eqref{eq:k} and \eqref{eq:p} respectively we have
\begin{equation*}
\debar k(z, \zeta) = [\Delta^V] - p(z,\zeta)
\end{equation*}
in the sense of currents on $V_{reg}\times V_{reg}$.
\end{proposition}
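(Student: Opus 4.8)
The plan is to make rigorous the formal derivation carried out just above the statement in Section~\ref{ssec:koppel0qX}. Fix $D'\Subset D$ and an arbitrary weight $g$ with respect to $z\in D'$ of the kind discussed in Section~\ref{ssec:koppelCn}. For $\real\,\lambda\gg 0$ the form $\gamma^\lambda\wedge g$ is a weight, so the Koppelman identity \eqref{plast} holds as an equality of currents in $D'\times D$. The point is that wedging \eqref{plast} with $R(z)\wedge dz$ and then letting $\lambda\to 0$ makes this identity collapse, via the structure identity \eqref{strut2}, precisely onto the kernels $k$ and $p$ of \eqref{eq:k} and \eqref{eq:p}.

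First I would multiply \eqref{plast}, in the variable $z$, by the current $R(z)\wedge dz$. Here $R(z)$ is pseudomeromorphic, and apart from $[\Delta^D]$ the objects occurring in \eqref{plast} are the holomorphic Hefer forms $H$, the $|F(\zeta)|^{2\lambda}$-regularized currents $R^\lambda(\zeta)$ and $U^\lambda(\zeta)$, the smooth weight $g$, and the almost semi-meromorphic Bochner--Martinelli form $B$; hence all the products with $R(z)\wedge dz$ are well defined, by Proposition~\ref{multprop} together with the basic calculus of pseudomeromorphic currents. Since $f_1(z)R(z)=0$ by \eqref{dualprincip}, the term $f_1(z)\sum_k H^1_k U^\lambda_k$ in $\gamma^\lambda$ is annihilated, so $\gamma^\lambda$ reduces to $HR^\lambda=\sum_k H^0_k R^\lambda_k$. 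Letting $\lambda\to 0$ --- all the current-valued functions of $\lambda$ in sight have analytic continuations to a neighborhood of the origin, by \eqref{gulU}--\eqref{gulR}, and one takes their values at $\lambda=0$ --- and using $\debar(R(z)\wedge dz)=\debar R(z)\wedge dz$ together with \eqref{strut2}, we arrive at the identity \eqref{plutt}, now read as currents on $V_{reg}\times V_{reg}$, up to an extra term carrying the factor $\debar R(z)$. On $V_{reg}\times V_{reg}$ this extra term vanishes: the sets $V^r$, $r\ge 1$, are invariants of $\hol_V$ that are empty at Cohen-Macaulay points and hence contained in $V_{sing}$, so there $R=R_p$ and $\debar R_p=0$.

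Next I would identify \eqref{plutt} with the assertion. Since $\omega(z)\wedge[V_z]=R(z)\wedge dz$ has full degree $dz_1\wedge\cdots\wedge dz_N$ in $z$, one may factor $d\eta=d\eta_1\wedge\cdots\wedge d\eta_N$ out of $(HR\wedge g\wedge B)_{N,N-1}$ and $(HR\wedge g)_{N,N}$ and replace it by $C_\eta(z,\zeta)\,d\zeta_1\wedge\cdots\wedge d\zeta_N$, only the $d\zeta$-components of the $d\eta_j$ surviving against the full $dz$-degree of $\omega(z)$; passing to the auxiliary frame $\epsilon_1,\ldots,\epsilon_N$ and applying \eqref{strut2} once more to rewrite $H^0_{p+k}R_{p+k}(\zeta)\wedge d\zeta$ in terms of $\hat{H}_{p+k}^0\omega_k(\zeta)\wedge[V_\zeta]$, the two expressions in parentheses turn into $[V_\zeta]\wedge k(z,\zeta)$ and $[V_\zeta]\wedge p(z,\zeta)$ with $k$ and $p$ exactly as in \eqref{eq:k} and \eqref{eq:p}. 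Thus \eqref{plutt} becomes $\debar(\omega_0(z)\wedge k(z,\zeta))=\omega_0\wedge[\Delta^V]-\omega_0(z)\wedge p(z,\zeta)$ on $V_{reg}\times V_{reg}$, where we used that $\omega=\omega_0$ there (the higher pieces $\omega_r$ correspond to $R_{p+r}$, which are supported on $V^r\subset V_{sing}$). Finally, dividing by the generically nonvanishing form $\omega_0$ --- legitimate by Proposition~\ref{omegadiv}, which says that every smooth $(n,q)$-form on $V_{reg}$ is $\omega_0$-divisible --- gives $\debar k(z,\zeta)=[\Delta^V]-p(z,\zeta)$ on $V_{reg}\times V_{reg}$.

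The step I expect to be the main obstacle is the multiplication of the current identity \eqref{plast} by $R(z)\wedge dz$ combined with the passage $\lambda\to 0$: one must verify that the relevant current-valued functions of $\lambda$ really do have analytic continuations past $\lambda=0$, that taking the value at $\lambda=0$ commutes with $\debar$ and with wedging by $R(z)\wedge dz$, and that the various products of pseudomeromorphic and almost semi-meromorphic currents are the intended ones. This is precisely the content of \cite[Section~5]{AS}; it rests on the construction of $U$ and $R$ in Section~\ref{ssec:rescurr}, on \eqref{gulU}--\eqref{gulR}, and on Proposition~\ref{multprop}. By comparison, the concluding division by $\omega_0$ via Proposition~\ref{omegadiv} is a soft step.
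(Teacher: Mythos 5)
Your overall skeleton matches the paper's: the paper itself only sketches the Cohen--Macaulay case of this computation in Section~\ref{ssec:koppel0qX} and otherwise cites \cite[Lemma~5.3]{AS} for the rigorous, general statement, so deferring the analytic continuation issues to \cite[Section~5]{AS} is consistent with what the paper does. The problem is your treatment of the non-Cohen--Macaulay case, which is where the real content lies and where your argument breaks down.

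You dispose of the extra term carrying $\debar R(z)$ by asserting that on $V_{reg}$ one has $R=R_p$, $\debar R=0$ and $\omega=\omega_0$, ``the higher pieces $\omega_r$ being supported on $V^r\subset V_{sing}$.'' This is not what Proposition~\ref{structureprop} says, and it is false in general. The proposition gives $\omega_r=\alpha_r\omega_{r-1}$ with $\alpha_r$ \emph{smooth} outside $V^r$ --- not zero there --- so $\omega_r$ is in general a nonzero smooth form on $V_{reg}$ for $r\geq 1$. Indeed, if $\omega_r$ were supported on $V^r$ (or even on $V_{sing}$ when that set has codimension $>r$), the dimension principle would force $\omega_r\equiv 0$, since $\omega_r$ is pseudomeromorphic of bidegree $(*,r)$ and $\dim V^r<n-r$; but then the quasi-isomorphism in the proof of Theorem~\ref{thm:resol} would give $\E xt^{p+r}(\hol_D/\J_V,\mathit{\Omega}^N_D)=0$ for all $r\geq1$, i.e.\ every $V$ would behave as if Cohen--Macaulay. (Concretely, for two $2$-planes in $\C^4$ meeting at the origin one has $\E xt^{3}\neq 0$, so $R_3\neq 0$, and $R_3$ cannot be supported at the origin by the dimension principle; hence $\omega_1\neq 0$ on $V_{reg}$.) Consequently $\debar\bigl(R(z)\wedge dz\bigr)=\sum_k f_{k+1}(z)R_{k+1}(z)\wedge dz$ does \emph{not} vanish over $V_{reg}$, your extra term survives, and your concluding identity with only $\omega_0(z)$ is not the one that comes out (note also that the kernels \eqref{eq:k}--\eqref{eq:p} genuinely involve all the $\omega_k(\zeta)$, which is inconsistent with $\omega=\omega_0$ on $V_{reg}$). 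The actual proof must keep these terms and cancel them against the $f_{\ell+1}(z)H^{\ell+1}_k$-terms produced by the Hefer identities $\delta_\eta H^\ell_k=H^\ell_{k-1}f_k(\zeta)-f_{\ell+1}(z)H^{\ell+1}_k$ (the $\nabla$-calculus of \cite[Section~5]{AS}); this bookkeeping is the essential missing idea, not a soft step that can be bypassed on $V_{reg}\times V_{reg}$.
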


\begin{remark}
In \cite{AS} it is assumed that 
$g$ is a weight in $D'\times D$, where $D'\Subset D$ and $\zeta\mapsto
g(z,\zeta)$ has compact support in $D$, 
but the proof goes through for any weight.
\end{remark}

\smallskip

The integral operators $\mathscr{K}$ and $\mathscr{P}$ for forms in $\W^{0,q}$ introduced in \cite{AS} are defined as follows.
Let $g$ 
in \eqref{eq:k} and \eqref{eq:p} be a weight in $D'\times D$, where
$D'\Subset D$ and $\zeta\mapsto g(z,\zeta)$ has compact support in
$D$, cf.\ Section~\ref{ssec:koppelCn}, 
and let $\mu\in \W^{0,q}(D)$. Since $\omega$ and $B$ are almost semi-meromorphic 
$k(z,\zeta)$ and $p(z,\zeta)$ are also almost semi-meromorphic and it follows from 
Proposition~\ref{multprop} that $k(z,\zeta)\wedge \mu(\zeta)$ and $p(z,\zeta)\wedge\mu(\zeta)$ are in $\W(V'\times V)$, 
where $V'=D'\cap V$. 
Let $\tilde{\pi} \colon V'_z\times V_{\zeta} \to V'_z$ be the natural
projection onto $V'_z$. 
It follows that
\begin{equation*}
\mathscr{K} \mu (z) := \tilde{\pi}_* \big(k(z,\zeta)\wedge \mu(\zeta)\big),
\end{equation*}
\begin{equation*}
\mathscr{P} \mu (z) := \tilde{\pi}_* \big(p(z,\zeta)\wedge \mu(\zeta)\big),
\end{equation*}
are in $\W(V'_z)$.
The sheaves $\A_V^{0,\bullet}$ are then morally defined to be the smallest sheaves that contain
$\mathcal{E}_V^{0,\bullet}$ and are closed under operators $\mathscr{K}$ and under multiplication
with $\mathcal{E}_V^{0,\bullet}$. More precisely, the stalk $\A_{V,x}^{0,q}$ consists of those germs of currents
which can be written as a finite sum of of terms 
\begin{equation*}
\xi_m\wedge \K_m\big(\cdots \xi_1\wedge \K_1(\xi_0)\cdots\big),
\end{equation*}
where $\xi_j$ are smooth $(0,*)$-forms and $\K_j$ are integral operators at $x$ of the above form; 
cf.\ \cite[Definition~7.1]{AS}.

\section{Koppelman formulas for $(n,q)$-forms}\label{koppelsec}

Let $V$ be a pure $n$-dimensional analytic subset of a pseudoconvex domain $D\subset \C^N$ and 
let $\omega$ be a structure form on $V$. Let 
$g$ be a weight in $D\times D'$, where $D'\subset D$ and let 
$k(z,\zeta)$ and $p(z,\zeta)$ be the kernels defined 
respectively in \eqref{eq:k} and \eqref{eq:p}.
Since $k$ and $p$ are almost semi-meromorphic it follows from Proposition~\ref{multprop} that if
$\mu=\mu(z)\in \mathcal{W}^{n,q}(V)$, then 
$k(z,\zeta)\wedge \mu(z)$ and $p(z,\zeta)\wedge \mu(z)$ are well-defined currents
in $\mathcal{W}(V\times V)$.
Assume that $z\mapsto g(z,\zeta)$ has compact support in $D$ or that
$\mu$ has 
compact support in $V$.
Let $\pi \colon V_{z}\times V'_{\zeta} \to V'_{\zeta}$ be the natural
projection, where, as above, $V'=D'\cap V$, and define
\begin{equation}\label{tomteK}
\check{\mathscr{K}}\mu (\zeta) := \pi_* \big(k(z,\zeta)\wedge \mu(z)\big)
\end{equation}
\begin{equation}\label{tomteP}
\check{\mathscr{P}}\mu (\zeta) := \pi_* \big(p(z,\zeta)\wedge \mu(z)\big).
\end{equation}
It follows that $\check{\mathscr{K}} \mu$ and $\check{\mathscr{P}} \mu$ are well-defined currents in 
$\mathcal{W}(V'_\zeta)$. Notice that $\check{\mathscr{P}} \mu$ is of the form $\sum_r\omega_r\wedge \xi_r$, where
$\xi_r$ is a smooth $(0,*)$-form (with values in an appropriate bundle) in general, and holomorphic
if the weight $g(z,\zeta)$ is chosen holomorphic in $\zeta$; cf.\ \eqref{eq:p}. It is natural to write
\begin{equation*}
\check{\mathscr{K}}\mu (\zeta) = \int_{V_z} k(z,\zeta)\wedge \mu(z), \quad
\check{\mathscr{P}}\mu (\zeta) = \int_{V_z} p(z,\zeta)\wedge \mu(z).
\end{equation*}

We have the following analogue of Proposition~6.3 in \cite{AS}.
\begin{proposition}\label{Wpropp}
Let $\mu(z)\in \mathcal{W}^{n,q}(V)$ and assume that $\debar \mu \in
\mathcal{W}^{n,q+1}(V)$.
Let $\check{\mathscr{K}}$ and $\check{\mathscr{P}}$ be as above. 
Then 
\begin{equation}\label{hurv}
\mu = \debar \check{\mathscr{K}} \mu + \check{\mathscr{K}} (\debar \mu) + \check{\mathscr{P}} \mu
\end{equation}
in the sense of currents on $V'_{reg}$.
\end{proposition}

\begin{proof}
If $\varphi=\varphi(\zeta)$ is a $(0,n-q)$-test form on $V'_{reg}$ it follows, cf.\ the beginning of Section~\ref{ssec:koppelCn}, 
from Proposition~\ref{koppelcurrent} that
\begin{equation}\label{gaston}
\varphi(z)= \debar_z \int_{V'_{\zeta}} k(z,\zeta)\wedge \varphi(\zeta) +
\int_{V'_{\zeta}} k(z,\zeta)\wedge \debar \varphi(\zeta) +
\int_{V'_{\zeta}} p(z,\zeta)\wedge \varphi(\zeta)
\end{equation}
for $z\in V'_{reg}$.
By \cite[Lemma~6.1]{AS}\footnote{The proof goes through also in our setting, i.e., when
$g$ not necessarily has compact support in $D_{\zeta}$ but
$\varphi(\zeta)$ has.} 
the first two terms on the right hand side are smooth on $V'$. The last term
is smooth $V'$ since $z\mapsto p(z,\zeta)$ is smooth. 
Assume that $z\mapsto g(z,\zeta)$ has compact support in $D$. Then so have
$z\mapsto k(z,\zeta)$ and $z\mapsto p(z,\zeta)$. Thus each term in the
right hand side of \eqref{gaston} is a test form in $z$, and so $\mu$
acts on each term. Thus \eqref{hurv} follows in this case. If $\mu$
has compact support \eqref{hurv} holds without the assumption that
$z\mapsto g(z,\zeta)$ has compact support.

For the general case, let $h=h(z)$ be a holomorphic tuple
such that $\{h=0\}=V_{sing}$ and let $\chi_{\epsilon}=\chi(|h|/\epsilon)$.
Then the proposition holds for $\chi_{\epsilon} \mu$ (since $k$ and $p$ have compact support in $z$).
Since $k(z,\zeta)\wedge \mu(z)$ and $p(z,\zeta)\wedge \mu(z)$ are in $\mathcal{W}(V'\times V)$
it follows that $\check{\mathscr{K}} (\chi_{\epsilon}\mu)\to \check{\mathscr{K}} \mu$ and that
$\check{\mathscr{P}} (\chi_{\epsilon}\mu) \to \check{\mathscr{P}} \mu$ in the sense of currents, and consequently
$\debar\check{\mathscr{K}} (\chi_{\epsilon}\mu) \to \debar\check{\mathscr{K}} \mu$ in the current sense.
It remains to see that 
$\lim_{\epsilon\to 0}\check{\mathscr{K}} (\debar(\chi_{\epsilon} \mu))=\check{\mathscr{K}}(\debar\mu)$.
In fact, since by assumption $\debar \mu \in \mathcal{W}(V)$ it follows 
that $\check{\mathscr{K}} (\chi_{\epsilon}\debar\mu)\to \check{\mathscr{K}} (\debar \mu)$ and so
\begin{equation}\label{eq:urk0}
\lim_{\epsilon\to 0} \check{\mathscr{K}} (\debar(\chi_{\epsilon} \mu))=
\check{\mathscr{K}}(\debar\mu) + \lim_{\epsilon \to 0} \check{\mathscr{K}}(\debar\chi_{\epsilon}\wedge \mu);
\end{equation}
it also follows that
\begin{equation}\label{eq:urk} 
\debar \chi_{\epsilon}\wedge \mu = \debar (\chi_{\epsilon} \mu) - \chi_{\epsilon} \debar \mu\to \debar \mu - \debar \mu=0.
\end{equation}
Now, if $\zeta$ is in a compact subset of $V'_{reg}$ and $\epsilon$ is sufficiently small, 
then $k(z,\zeta)\wedge \debar \chi_{\epsilon}(z)$ is a smooth form times $\omega=\omega(\zeta)$.
Since $\mu(z)\wedge \omega(\zeta)$ is just a tensor product it follows from \eqref{eq:urk} that
$\debar \chi_{\epsilon}(z)\wedge \mu(z) \wedge \omega(\zeta)\to 0$.
Hence, $\check{\mathscr{K}}(\debar\chi_{\epsilon}\wedge \mu)\to 0$ as a current on 
$V'_{reg}$ and so by \eqref{eq:urk0} we have 
$\lim_{\epsilon\to 0}\check{\mathscr{K}} (\debar(\chi_{\epsilon} \mu))=\check{\mathscr{K}}(\debar\mu)$. 
\end{proof}

\section{The dualizing Dolbeault complex of $\B^{n,q}_X$-currents}\label{Asection}
Let $X$ be a reduced complex space of pure dimension $n$.
We define our sheaves $\B_X^{n,\bullet}$ in a way similar to the definition of $\A_X^{0,\bullet}$;
see the end of Section~\ref{ssec:koppel0qX}.
In a moral sense $\oplus_q\B_X^{n,q}$ then becomes the smallest sheaf that contains 
$\oplus_q\mathcal{E}_X^{n,q}$ and that is closed under integral operators $\check{\mathscr{K}}$ and exterior
products with elements of $\oplus_q\mathcal{E}_X^{0,q}$.  

\begin{definition}\label{A-sheaves}
We say that an $(n,q)$-current $\psi$ on an open set $V\subset X$ is a section of $\B_X^{n,q}$, 
$\psi\in \B^{n,q}(V)$, if, for every $x\in V$, the germ $\psi_x$ can be written as a finite sum of terms
\begin{equation}\label{Acurrent}
\xi_m\wedge \check{\mathscr{K}}_m\left( \cdots \xi_1\wedge \check{\mathscr{K}}_1(\omega\wedge \xi_0) \cdots \right),
\end{equation}
where $\xi_j$ are smooth $(0,*)$-forms, $\check{\mathscr{K}}_j$ are integral operators at $x$ given by \eqref{tomteK}
with kernels of the form \eqref{eq:k}, and $\omega$ is a structure form at $x$.
\end{definition}

Notice that $\omega$ takes values in some bundle $\oplus_j E_j$ so we let $\xi_0$ take values in $\oplus_j E^*_j$ 
to make $\omega\wedge \xi_0$ scalar valued.

\smallskip

It is clear that $\check{\mathscr{K}}$ preserves $\oplus_q\B_X^{n,q}$. 
Notice that we allow $m=0$ in the definition above so that $\B_X^{n,\bullet}$ contains all 
currents of the form $\omega\wedge \xi_0$, where $\xi_0$ is smooth with values in $\oplus_jE^*_j$.
Since $\check{\mathscr{P}}\mu$ is of the form $\omega\wedge \xi$ for a smooth $\xi$,
also $\check{\mathscr{P}}$ preserves $\oplus_q\B_X^{n,q}$.

Recall that if $\mu\in \W^{n,*}(V)$, then $\check{\mathscr{K}} \mu \in \W^{n,*}(V')$, where $V'$ is 
a relatively compact subset of $V$. Since $\omega\wedge \xi_0\in \W_X^{n,*}$ it follows that 
$\B_X^{n,q}$ is a subsheaf of $\W_X^{n,q}$. In fact, by Proposition~\ref{prop:A} below we can say more. 

\begin{definition}\label{def:domain}
A current $\mu\in \oplus_q\W_X^{n,q}$ is said to be in {\em the domain of $\debar$}, $\mu \in \textrm{Dom}\,\debar$, 
if $\debar \mu \in \oplus_q\W_X^{n,q}$.
\end{definition}

Assume that $\mu\in \W_X^{n,q}$ is smooth on $X_{reg}$, let $h$ be a
holomorphic tuple such that $\{h=0\}=X_{sing}$, and, as above, let
$\chi_{\epsilon}=\chi(|h|/\epsilon)$. Then $\debar (\chi_{\epsilon}\mu)\to \debar \mu$ since $\mu$ has the SEP. 
In view of the first equality in \eqref{eq:urk} it follows that $\debar \mu$ has the SEP if and only if
$\debar\chi_{\epsilon}\wedge \mu\to 0$ as $\epsilon\to 0$; this last condition 
can be interpreted as a ``boundary condition'' on $\mu$ at $X_{sing}$.

\begin{proposition}\label{prop:A}
Let $X$ be a reduced complex space of pure dimension $n$. Then
\begin{itemize}
\item[(i)] $\B_X^{n,q}\big|_{X_{reg}} = \mathcal{E}_X^{n,q}\big|_{X_{reg}}$,
\item[(ii)] $\mathcal{E}_X^{n,q} \subset \B_X^{n,q} \subset \textrm{Dom}\, \debar$.
\end{itemize}
\end{proposition}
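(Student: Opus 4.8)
The plan is to verify, in order, that $\mathcal{E}_X^{n,q}\subset\A_X^{n,q}$, that $\A_X^{n,q}$ and $\mathcal{E}_X^{n,q}$ agree on $X_{reg}$, and finally that $\A_X^{n,q}\subset\textrm{Dom}\,\debar$; the first assertion and one half of the second are immediate, while the last inclusion carries the weight of the proof. For $\mathcal{E}_X^{n,q}\subset\A_X^{n,q}$: a germ of a smooth $(n,q)$-form can, by Proposition~\ref{omegadiv}, be written $\omega_0\wedge\psi'$ with $\psi'$ smooth and $E_p^*|_V$-valued; extending $\psi'$ by zero on the remaining summands of $\oplus_jE_j^*$ gives a smooth $\xi_0$ with $\omega\wedge\xi_0$ equal to the given form, that is, a term \eqref{Acurrent} with $m=0$. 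This also yields the inclusion ``$\supseteq$'' of the second assertion.

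For the reverse inclusion on $X_{reg}$ I would show, by induction on the number $m$ of integral operators occurring in \eqref{Acurrent}, that every such term is smooth on $X_{reg}$. For $m=0$ this holds since each component $\omega_r$ of the structure form is smooth on $V_{reg}$ by Proposition~\ref{structureprop}. For the inductive step one uses that, for $\zeta$ ranging over a relatively compact subset of $V_{reg}$, the kernel $k(z,\zeta)$ in \eqref{eq:k} is smooth off the diagonal and has there only an integrable Bochner--Martinelli-type singularity, so that $\check{\mathscr{K}}_m$ applied to the preceding term (which is smooth on $V_{reg}$ by the inductive hypothesis and lies in $\W$) is again smooth near every regular point --- the analogue of \cite[Lemma~6.1]{AS} --- and wedging by the smooth $\xi_m$ preserves smoothness.

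For $\A_X^{n,q}\subset\textrm{Dom}\,\debar$, that is, $\debar\psi\in\W_X^{n,q+1}$ for a term \eqref{Acurrent}, I again induct on $m$. For $m=0$, with $\psi=\omega\wedge\xi_0=\sum_r\omega_r\wedge(\xi_0)_r$, the Leibniz rule together with the identities $\debar\omega_r=\pm f_{p+r+1}|_V\,\omega_{r+1}$ of Proposition~\ref{structureprop} (with the convention $\omega_n:=0$) show that $\debar\psi$ is a finite sum of currents $\gamma_s\wedge\omega_s$ with each $\gamma_s$ smooth; since $\W$ contains the almost semi-meromorphic $\omega_s$ and is stable under multiplication by smooth forms, $\debar\psi\in\W$. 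For the inductive step write $\psi=\xi_m\wedge\check{\mathscr{K}}_m\psi'$ with $\psi'$ of the form \eqref{Acurrent} with $m-1$ operators, so that $\psi'\in\W$ and, by the inductive hypothesis, $\debar\psi'\in\W$. Then $\debar\psi=\debar\xi_m\wedge\check{\mathscr{K}}_m\psi'\pm\xi_m\wedge\debar\check{\mathscr{K}}_m\psi'$, and since $\check{\mathscr{K}}_m$ preserves $\W$ it remains to show $\debar\check{\mathscr{K}}_m\psi'\in\W$. Here $\check{\mathscr{K}}_m\psi'\in\W$ is smooth on $X_{reg}$ by the parts already proved, so by the criterion for $\debar\mu$ to lie in $\W$ recalled just before the proposition it is enough to check that $\debar\chi_\epsilon\wedge\check{\mathscr{K}}_m\psi'\to0$, where $\chi_\epsilon=\chi(|h|/\epsilon)$ and $h$ cuts out $X_{sing}$. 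Since $\chi_\epsilon$ depends only on $\zeta$, this expression equals $\pi_*\big(\debar\chi_\epsilon\wedge k(z,\zeta)\wedge\psi'(z)\big)$ by the projection formula, and as $k$ is almost semi-meromorphic, Proposition~\ref{multprop} gives $k\wedge\psi',\,k\wedge\debar\psi'\in\W(V_z\times V'_\zeta)$.

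The main obstacle is exactly this last step: one must deduce $\debar\check{\mathscr{K}}_m\psi'\in\W$ --- equivalently, that the homotopy identity $\debar\check{\mathscr{K}}_m\psi'=\psi'-\check{\mathscr{K}}_m\debar\psi'-\check{\mathscr{P}}_m\psi'$, which Proposition~\ref{Wpropp} yields only on $V'_{reg}$, persists across $V'_{sing}$. The difference of its two sides is a pseudomeromorphic $(n,q)$-current supported on $V'_{sing}$, and the dimension principle annihilates such a current only when $q=0$; for $q\geq1$ a bare dimension count is not enough. Instead one has to control the regularizations $\debar\chi_\epsilon\wedge k(z,\zeta)$ near the singular locus, using the Koppelman equation $\debar k=[\Delta^V]-p$ of Proposition~\ref{koppelcurrent} on $V_{reg}\times V_{reg}$ together with the explicit assembly of $k$ from the structure form $\omega$ (for which $\debar\omega_r=\pm f_{p+r+1}|_V\,\omega_{r+1}$), the Hefer morphisms, and the Bochner--Martinelli forms, as well as the pseudomeromorphic calculus --- much in the spirit of \cite[Sections~5--7]{AS}.
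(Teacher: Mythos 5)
Your treatment of $\mathcal{E}_X^{n,q}\subset\A_X^{n,q}$ via Proposition~\ref{omegadiv}, of part (i), and of the base case $m=0$ of the last inclusion all match the paper. But the inductive step for $\A_X^{n,q}\subset\textrm{Dom}\,\debar$ is where the real content lies, and there your proposal stops short: you correctly reduce everything to showing $\debar\chi_\epsilon\wedge\check{\mathscr{K}}_m\psi'\to 0$, observe that a bare dimension count on $V$ fails, and then defer to ``controlling the regularizations \ldots\ in the spirit of [AS]''. That is precisely the gap. Note also that routing the argument through the persistence of the homotopy formula of Proposition~\ref{Wpropp} across $V_{sing}$ is backwards relative to the paper's logic: there, Theorem~\ref{thm:debar} and Theorem~\ref{thm:Homotopy} (which extend that formula across the singular set) are deduced \emph{from} Proposition~\ref{prop:A}, so one cannot use them here.

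The missing idea is to lift the whole computation to the $(m+1)$-fold product $V^{m+1}$. Set $T=\bigwedge_{j=1}^m k_j(w^{j-1},w^j)\wedge\omega(w^0)$, so that $\mu=\pi_*(\xi\wedge T)$ for a suitable smooth $\xi$, and prove directly that $\tau_r:=\lim_{\epsilon\to 0}\debar\chi(|h(w^m)|/\epsilon)\wedge T_r=0$ by double induction over $m$ and $r$ (here $T_r$ is the piece of $T$ involving $\omega_r$). Off each partial diagonal $\{w^j=w^{j-1}\}$ the kernel $k_j$ is a smooth form times a structure form in $w^j$, so $T$ splits there as a tensor product of shorter currents of the same type; the induction hypothesis in $m$ then forces $\textrm{supp}\,\tau_r\subset\{w^1=\cdots=w^m\}\cap(Sing(V_{w^m})\times V^m)$, which has codimension at least $mn+1$ in $V^{m+1}$. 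Since $\tau_0$ has bidegree $(*,mn-m+1)$ --- the antiholomorphic degree grows by only $n-1$ per kernel while the codimension of the multi-diagonal grows by $n$ --- the dimension principle kills $\tau_0$ for $m\geq1$. For $r\geq1$ one bootstraps using $\omega_r=\alpha_r\omega_{r-1}$ with $\alpha_r$ smooth outside $V^r$ (Proposition~\ref{structureprop}): each step shrinks the support by at least one more in codimension, matching the one extra unit of antiholomorphic degree, and the dimension principle applies again. This product-space bookkeeping is exactly what your proposal is missing, and without it the inductive step does not close.
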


To prove $(i)$ we need to prove that if $\mu\in\W(V)$ is smooth in a
neighborhood of a given point $x\in V'_{reg}$, then $\check{\mathscr{K}}
\mu (z)$ is smooth in a neighborhood of $x$. This is proved in the
same way as part (i) of Lemma~6.1 in \cite{AS}. 
The proof (of the second inclusion) of $(ii)$ is similar to the proof
that $\A_X^{0,q}\subset \textrm{Dom}\, \debar$ in \cite{AS}, see Section~7 and 
Lemmas~6.4 and~4.1 in \cite{AS}. We include a proof for the reader's
convenience. 

\begin{proof}[Proof of $(ii)$]
Let $\psi$ be a smooth $(n,q)$-form on $X$ and let $\omega=\sum_r\omega_r$ be a structure form. 
Then, by Proposition~\ref{omegadiv}, there is smooth $(0,q)$-form $\xi$ (with values in the appropriate bundle) 
such that $\psi=\omega_0\wedge \xi$ and so $\mathcal{E}_X^{n,q} \subset \B_X^{n,q}$. 

To prove the second inclusion of (ii)
we may assume that $\mu$ is of the form \eqref{Acurrent}. Let $k_j(w^{j-1},w^j)$, $j=1,\ldots,m$, 
be the integral kernel corresponding to $\check{\K}_j$; $w^j$ are coordinates on $V$ for each $j$.
We define an almost semi-meromorphic current $T$ on $V^{m+1}$ (the $m+1$-fold Cartesian product) by
\begin{equation}\label{hund-1}
T:=\bigwedge_{j=1}^m k_j(w^{j-1},w^j)\wedge \omega(w^0), 
\end{equation}
and we let $T_r$ be the term of $T$ corresponding to $\omega_r$.
Notice that $\pi_*(\xi\wedge T)=\mu$ for a suitable smooth $(0,*)$-form $\xi$
on $V^{m+1}$, where $\pi\colon V^{m+1}\to V_{w^m}$ is the natural projection.
We claim that 
\begin{equation}\label{klot}
\lim_{\epsilon\to 0}\debar \chi(|h(w^m)|/\epsilon)\wedge T_r=0
\end{equation}
for all $r$, where $h$ is a holomorphic tuple such that
$\{h=0\}=V_{sing}$. 
Taking this for granted, 
\[
\lim_{\epsilon\to 0} \dbar\chi_\epsilon\wedge \mu=
\pi_*\big (\lim_{\epsilon\to 0} \dbar \chi(|h(w^m)|/\epsilon) \wedge \xi\wedge
T\big )=0, 
\]
and thus $\mu\in \textrm{Dom}\, \debar$, cf.\ the discussion after
Definition~\ref{def:domain}.

We will prove that \eqref{klot} holds for all $r$ by double induction
over  $m$ and $r$. 
If $m=0$ then $T=\omega(w^0)$ and, since $\debar\omega_r=f_{r+1}|_V \omega_{r+1}$ by \eqref{structureprop},
it follows that $\debar T$ has the SEP, i.e., $\lim_{\epsilon\to 0}\debar \chi(|h|/\epsilon)\wedge T=0$.

Assume that \eqref{klot} holds for $m\leq k-1$ and all $r$. The left hand side of \eqref{klot}, with $m=k$,
defines a pseudomeromorphic current $\tau_r$ of bidegree $(*,kn-k+r+1)$ since each $k_j$ has bidegree $(*,n-1)$
and clearly $\textrm{supp}\, \tau_r\subset Sing(V_{w^m})\times V^m$.
If $w^j\neq w^{j-1}$, then $k_j(w^{j-1},w^j)$ is a smooth form times some structure form
$\tilde{\omega}(w^j)$. Thus $T$, with $m=k$, is a smooth form times the \emph{tensor product} of two currents, 
each of which is of the form \eqref{hund-1} with $m<k$. By the induction hypothesis, it follows that
\eqref{klot}, with $m=k$, holds outside $\{w^j=w^{j-1}\}$ for all $j$. Hence,
$\tau_r$ has support in $\{w^1=\cdots =w^k\}\cap (Sing(V_{w^m})\times V^m)$, which has codimension at least
$kn+1$ in $V^{k+1}$. Since $\tau_0$ has bidegree $(*,kn-k+1)$, $k\geq 1$, it follows from the dimension principle that
$\tau_0=0$. 

By Proposition~\ref{structureprop}, there is a $(0,1)$-form $\alpha_1$ such that $\omega_1=\alpha_1\omega_0$ and 
$\alpha_1$ is smooth outside $V^1$ (cf.\ \eqref{ballong}) which has codimension at least $2$ in $V$.
Since $\tau_1=\alpha_1(w^0)\tau_0$ outside $V^1_{w^0}$ and $\tau_0=0$ it follows that $\tau_1$ has support in 
$\{w^1=\cdots =w^k\}\cap (V^1_{w^0}\times V^m)$. This set has codimension at least $kn+2$ in $V^{m+1}$
and $\tau_1$ has bidegree $(*,kn-k+2)$ so the dimension principle shows that $\tau_1=0$. Continuing in this way
we get that $\tau_r=0$ for all $r$ and hence, \eqref{klot} holds with $m=k$. 
\end{proof}

\begin{theorem}\label{thm:debar}
Let $X$ be a reduced complex space of pure dimension $n$. Then
$\debar \colon \B_X^{n,q}\to \B_X^{n,q+1}$.
\end{theorem}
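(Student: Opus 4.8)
The plan is to prove the statement by induction on the number $m$ of nested integral operators in a representation \eqref{Acurrent}: I will show that for every germ $\mu$ that is a single term of the form \eqref{Acurrent}, the current $\debar\mu$ is again a finite sum of terms of that form. Since every section of $\A_X^{n,q}$ is locally a finite sum of such terms and $\debar$ raises the anti-holomorphic degree by one, this yields $\debar\colon\A_X^{n,q}\to\A_X^{n,q+1}$. A point I will use repeatedly is Proposition~\ref{prop:A}(ii): every section of $\A_X^{n,\bullet}$ lies in $\textrm{Dom}\,\debar$, so that all the currents obtained below by applying $\debar$ are a priori elements of $\W_X^{n,\bullet}$, and equalities of such currents may be checked on $X_{reg}$.

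For the base case $m=0$ a term is $\omega\wedge\xi_0$ with $\xi_0$ a smooth $(0,\bullet)$-form and $\omega=\omega_0+\cdots+\omega_{n-1}$ a structure form. By the Leibniz rule $\debar(\omega\wedge\xi_0)=\debar\omega\wedge\xi_0\pm\omega\wedge\debar\xi_0$; the second term is again of the form \eqref{Acurrent} with $m=0$, and for the first I would use the relations $\debar\omega_r=f_{p+r+1}|_V\omega_{r+1}$ from Proposition~\ref{structureprop} and absorb the holomorphic (hence smooth) morphisms $f_{p+r+1}|_V$ into the smooth factor, rewriting $\debar\omega\wedge\xi_0$ as a finite sum of terms $\omega_{r+1}\wedge\xi_0'$ with $\xi_0'$ smooth, i.e.\ again terms of the form \eqref{Acurrent}.

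For the inductive step I would write a term with $m$ operators as $\mu=\xi_m\wedge\check{\mathscr{K}}_m(\nu)$, where $\nu$ is a term with $m-1$ operators, so that the induction hypothesis gives $\debar\nu\in\A_X^{n,\bullet}$. By the Leibniz rule, $\debar\mu=\debar\xi_m\wedge\check{\mathscr{K}}_m\nu\pm\xi_m\wedge\debar\check{\mathscr{K}}_m\nu$. The first summand is visibly of the form \eqref{Acurrent} with $m$ operators. For the second, let $\check{\mathscr{P}}_m$ be the companion operator of $\check{\mathscr{K}}_m$ coming from the same Koppelman formula and apply Proposition~\ref{Wpropp} to $\nu$ (this is legitimate: $\nu\in\W^{n,\bullet}$ and $\debar\nu\in\A\subset\W^{n,\bullet}$, and the compact-support requirement is met by working in a small neighbourhood of the base point and choosing the weights with compact support, as is implicit in Definition~\ref{A-sheaves}). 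This gives $\nu=\debar\check{\mathscr{K}}_m\nu+\check{\mathscr{K}}_m(\debar\nu)+\check{\mathscr{P}}_m\nu$ on $V'_{reg}$, hence $\debar\check{\mathscr{K}}_m\nu=\nu-\check{\mathscr{K}}_m(\debar\nu)-\check{\mathscr{P}}_m\nu$ there. Since $\check{\mathscr{K}}$ and $\check{\mathscr{P}}$ preserve $\A_X^{n,\bullet}$ and $\debar\nu\in\A_X^{n,\bullet}$, the right-hand side is a section of $\A_X^{n,\bullet}$, in particular of $\W_X^{n,\bullet}$; and the left-hand side lies in $\W_X^{n,\bullet}$ by Proposition~\ref{prop:A}(ii). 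As two currents with the SEP that agree on the regular part agree everywhere, $\debar\check{\mathscr{K}}_m\nu$ equals this $\A$-section as a current on $V'$. Finally $\xi_m\wedge\debar\check{\mathscr{K}}_m\nu\in\A_X^{n,\bullet}$ since $\A_X^{n,\bullet}$ is closed under wedge multiplication by smooth $(0,\bullet)$-forms, and therefore $\debar\mu\in\A_X^{n,\bullet}$.

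The hard part will be the passage from the homotopy identity, which holds a priori only on $V_{reg}$, to an honest equality of currents on all of $V'$; this is precisely where Proposition~\ref{prop:A}(ii) — guaranteeing that $\debar\check{\mathscr{K}}_m\nu$ is genuinely an element of $\W$ and not merely a current on $V_{reg}$ — combined with the SEP, is indispensable. A secondary technical nuisance is keeping track of supports and of the weights attached to the nested operators $\check{\mathscr{K}}_j$, so that Proposition~\ref{Wpropp} is applicable at each stage; this should be routine given that the whole statement is local.
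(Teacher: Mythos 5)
Your proposal is correct and follows essentially the same route as the paper: induction on the number of nested operators, with the base case handled via the relations $\debar\omega_r=f_{p+r+1}|_V\,\omega_{r+1}$ from Proposition~\ref{structureprop}, and the inductive step handled by solving for $\debar\check{\mathscr{K}}_m\nu$ in the homotopy formula of Proposition~\ref{Wpropp} and upgrading the resulting identity from $V'_{reg}$ to $V'$ via the SEP, using that $\A_X^{n,\bullet}\subset\textrm{Dom}\,\debar$. No substantive differences from the paper's argument.
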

\begin{proof}
Let $\psi$ be a germ of a current in $\B_X^{n,q}$ at some point $x$;
we may assume that 
\begin{equation*}
\psi=\xi_m\wedge \check{\mathscr{K}}_m\left( \cdots \xi_1\wedge \check{\mathscr{K}}_1(\omega\wedge \xi_0) \cdots \right),
\end{equation*}
see Definition~\ref{A-sheaves}.

We will prove the theorem by induction over $m$. Assume first that $m=0$ so that $\psi=\omega\wedge \xi_0$;
recall that $\xi_0$ takes values in $\oplus_j E_j^*$ so that $\psi$ is scalar valued. Then,
by Proposition~\ref{structureprop}, we have that 
\begin{equation*}
\debar \psi = \debar \omega \wedge \xi_0 \pm \omega\wedge \debar \xi_0
= f\omega \wedge \xi_0 \pm \omega\wedge\debar \xi_0 
=\omega\wedge f^*\xi_0 \pm \omega\wedge \debar \xi_0,
\end{equation*}
where $f=\oplus_{r=0}^n f_{p+r}|_V$ and $f^*$ is the transpose of $f$. Hence, $\debar \psi$
is in $\B_X^{n,q+1}$.
Assume now that $\debar \psi'\in \oplus_q\B_X^{n,q}$, where
\begin{equation*}
\psi'=\xi_{m-1}\wedge \check{\mathscr{K}}_{m-1}\left( \cdots \xi_1\wedge \check{\mathscr{K}}_1(\omega\wedge \xi_0) \cdots \right).
\end{equation*}
Then $\psi'\in \textrm{Dom}\,\debar \subset \W_X$ and by Proposition~\ref{prop:A}
$\psi'$ is smooth on $X_{reg}$. Thus, from Proposition~\ref{Wpropp} it follows that 
\begin{equation}\label{penna}
\psi'=\debar \check{\mathscr{K}}_{m} \psi' + \check{\mathscr{K}}_{m}(\debar \psi') + \check{\mathscr{P}}_{m}\psi'
\end{equation} 
in the current sense on $V_{reg}$, where $V$ is some neighborhood of $x$. 
By the induction hypothesis, $\debar \psi'\in \oplus_q\B_X^{n,q}$ and since 
$\check{\mathscr{K}}_{m}$ and $\check{\mathscr{P}}_{m}$ preserve $\oplus_q\B_X^{n,q}$ 
and furthermore $\oplus_q\B_X^{n,q}\subset \textrm{Dom}\, \debar$ it follows that every term of \eqref{penna}
has the SEP. Thus, \eqref{penna} holds in fact on $V$. Finally, notice that
$\psi=\xi_m\wedge\check{\mathscr{K}}_m\psi'$ and so, since $\psi'$, $\check{\mathscr{K}}_{m}(\debar \psi')$, and 
$\check{\mathscr{P}}_{m}\psi'$ all are in $\oplus_q\B_X^{n,q}$, it follows that $\debar \psi \in \B_X^{n,q+1}$.
\end{proof}

\medskip

\begin{proof}[Proof of Theorem~\ref{thm:Homotopy}]
Choose a weight $g$ in $D\times D'$, where $D'\Subset D$, such that 
$z\mapsto g(z,\zeta)$ has compact support in $D$, cf.\
Section~\ref{ssec:koppelCn}. 
Let $k(z,\zeta)$ and $p(z,\zeta)$ be the kernels defined by
\eqref{eq:k} and \eqref{eq:p}, respectively, and let
$\check{\mathscr{K}}$ and $\check{\mathscr{P}}$ be the associated
integral operators.

Let $\psi\in \B^{n,q}(V)$. By Proposition~\ref{Wpropp},
\begin{equation}\label{penna2}
\psi=\debar \check{\mathscr{K}} \psi + \check{\mathscr{K}}(\debar \psi) + \check{\mathscr{P}}\psi
\end{equation}
holds on $V'_{reg}$. Since $\check{\mathscr{K}}$ and  
$\check{\mathscr{P}}$ map $\oplus_q \B^{n,q}(V)$ to $\oplus_q \B^{n,q}(V')$
it follows from Theorem~\ref{thm:debar} that every term of \eqref{penna2} has the SEP. Hence, \eqref{penna2}
holds on $V'$ and the theorem follows.
\end{proof}

\medskip

\begin{proof}[Proof of Theorem~\ref{thm:resol}]
Let $V$ be a pure $n$-dimensional analytic subset of a pseudoconvex domain 
$D \subset \C^N$, let $\J_V$ be the sheaf in $D$ defined by $V$, 
let $i\colon V \hookrightarrow D$ be the inclusion, and, as above, let $\kappa=N-n$ be the codimension of $V$ in $D$. 
Let \eqref{eq:resolOJ} be a free resolution of $\hol_{D}/\J_V$ in (possibly a slightly smaller 
domain still denoted) $D$ and let $\omega=\sum_r\omega_r$ be an associated structure form.

Dualizing the complex \eqref{eq:resolOJ} 
and tensoring with the invertible sheaf
$\mathit{\Omega}_D^N$ gives the complex
\begin{equation}\label{eq:dualcplx}
0 \to \hol(E^*_0)\otimes_{\hol_D} \mathit{\Omega}_D^N \stackrel{f_1^*}{\longrightarrow} \cdots 
\stackrel{f_m^*}{\longrightarrow} \hol(E^*_m) \otimes_{\hol_D} \mathit{\Omega}_D^N \to 0.
\end{equation}
It is well-known that the cohomology sheaves of \eqref{eq:dualcplx} are isomorphic to 
$\Ext^{\bullet}(\hol_{D}/\J_V,\mathit{\Omega}_D^N)$ and that
$\Ext^{k}(\hol_{D}/\J_V,\mathit{\Omega}_D^N)=0$ for $k<\kappa$. Notice that if $V$ is Cohen-Macaulay,
i.e., if we can take $m=\kappa=\textrm{codim}\, V$ in \eqref{eq:resolOJ}, then 
$\Ext^{k}(\hol_{D}/\J_V,\mathit{\Omega}_D^N)=0$ for $k\neq \kappa$.

We define mappings $\varrho_k \colon \hol(E^*_{\kappa+k})\otimes \mathit{\Omega}_D^N \to \B_V^{n,k}$ by letting
$\varrho_k(h dz)=0$ for $k<0$ and $\varrho_k(h dz) = \omega_k \cdot h$ for $k\geq 0$;
here we let $\B_V^{n,k}:=0$ for $k< 0$ and $\hol(E_k^*)\otimes \Omega^N_D:=0$ for $k>m$.
We get a map
\begin{equation}\label{eq:diagr}
\varrho_{\bullet}\colon \left(\hol(E^*_{\kappa+\bullet})\otimes \mathit{\Omega}_D^N, f^*_{\kappa+\bullet}\right)
\longrightarrow \left(\B_V^{n,\bullet}, \debar \right)
\end{equation}
which is a morphism of complexes since if $h\in \hol(E_{\kappa+k}^*)$, then, by Proposition~\ref{structureprop},
\begin{equation*}
\debar \varrho_k (h dz) = \debar \omega_k \cdot h = f_{\kappa+k+1} \omega_{k+1}\cdot h = \omega_{k+1} \cdot f^*_{\kappa+k+1}h =
\varrho_{k+1}(f^*_{\kappa+k+1}h).
\end{equation*}
Hence, \eqref{eq:diagr} induces a map on cohomology. We claim that $\varrho_{\bullet}$ in fact is a quasi-isomorphism,
i.e., that $\varrho_{\bullet}$ induces an isomorphism on cohomology level. 
Given the claim it follows that $\HH^{k}(\B_V^{n,\bullet})$ is 
coherent since the corresponding cohomology sheaf of $(\hol(E^*_{\kappa+\bullet})\otimes \mathit{\Omega}_D^N, f^*_{\kappa+\bullet})$
is $\Ext^{\kappa+k}(\hol_{D}/\J_V,\mathit{\Omega}_D^N)$, which is coherent.

To prove the claim, recall first that $i_* \omega_k = R_k\wedge dz$. 
Thus, by \cite[Theorem~7.1]{ANoetherDual} the mapping on cohomology
is injective. For the surjectivity, choose 
a weight $g$ in $D\times D'$, where $D'\Subset D$, such that $g$ is
holomorphic in $\zeta$ and has compact support in $D_z$, cf.\
Section~\ref{ssec:koppelCn}, let
$k(z,\zeta)$ and $p(z,\zeta)$ be the integral kernels defined by
\eqref{eq:k} and \eqref{eq:p}, respectively, and let
$\check{\mathscr{K}}$ and $\check{\mathscr{P}}$ be the corresponding integral
operators. 
Let $\psi\in \B^{n,k}(V)$ be $\debar$-closed. By Theorem~\ref{thm:Homotopy} we get
\begin{equation*}
\psi(\zeta) = \debar \int_{V_z} k(z,\zeta)\wedge \psi(z) + 
\int_{V_z} p(z,\zeta)\wedge \psi(z)
\end{equation*}
in $V\cap D'$. Hence,
the $\debar$-cohomology class of $\psi$ is represented by the last
integral. 
Since $g$ is holomorphic in $\zeta$, the summand with index $k$ in
\eqref{eq:p} has exactly $n-k$ differentials of the form $d\bar z_j$ (and
$k$ differentials of the form $d\bar \zeta_j$). It
follows that  
\begin{multline*}
\int_{V_z} p(z,\zeta)\wedge \psi(z)=
 \int_{V_z} C_{\eta}(z,\zeta)\epsilon_N^*\wedge \cdots \wedge \epsilon_1^* \lrcorner
\hat{H}_{p+k}^0 \omega_k(\zeta) \wedge \hat{g}_{n-k, n-k}\wedge \psi(z)\\=:
\omega_k(\zeta) \wedge \int_{V_z} G(z,\zeta)\wedge \psi(z),
\end{multline*}
where $G$ takes values in $E_{p+k}^*$. Note that $G$ is holomorphic in
$\zeta$ since $g$ is. 
We will show that 
\begin{equation}\label{knaochta}
f_{p+k+1}^*\int_{V_z} G(z,\zeta)\wedge \psi(z)=0.
\end{equation}
Taking \eqref{knaochta} for granted, it follows that the class of $\psi$ is in the
image of the map on cohomology induced by $\varrho_k$, which proves the claim.

To prove \eqref{knaochta} first note that 
$d\eta \wedge G=H^0_{p+k}\wedge g_{n-k,n-k}$. 
By \eqref{retur}, 
\begin{multline}\label{umgas}
f_{p+k+1}^*H^0_{p+k} \wedge g_{n-k,n-k}=H^0_{p+k+1}f_{p+k+1}\wedge
g_{n-k,n-k}=\\
\delta_\eta H^0_{p+k+1} \wedge
g_{n-k,n-k} + 
f_1(z) H^1_{p+k}\wedge
g_{n-k,n-k}. 
\end{multline}
Since $H^0_{p+k+1}\wedge g_{n-k,n-k}$ takes values in $\Lambda_\eta$
and is of degree $(N+1,n-k)$ it vanishes and thus the first term in
the right-most expression in 
\eqref{umgas} equals 
\begin{equation*}
\pm H^0_{p+k+1}\wedge \delta_\eta g_{n-k,n-k}=\pm H^0_{p+k+1}\wedge
\dbar g_{n-k-1,n-k-1}=\pm \dbar \big (H^0_{p+k+1}\wedge
g_{n-k-1,n-k-1} \big ), 
\end{equation*}
where we have used that $\nabla_{\eta} g=0$ and that $H^0_{p+k+1}$ is
holomorphic. 
Using that  $H_{p+k}^1\wedge g_{n-k,n-k}$ and
$H^0_{p+k+1}\wedge g_{n-k-1,n-k-1}$ take values in $\Lambda_\eta$ and
have degree $(N,*)$ we get that 
\begin{equation*}
f^*_{p+k+1}H^0_{p+k}\wedge g_{n-k,n-k}=d\eta \wedge\big (\dbar A + f_1(z)
B\big )
\end{equation*}
for some smooth $A$ and $B$. 
Hence 
\begin{equation}
f_{p+k+1}^*\int_{V_z} G(z,\zeta)\wedge \psi(z)=
\int_{V_z} \dbar A \wedge \psi(z) + \int_{V_z} f_1(z) B \wedge
\psi(z)=0. 
\end{equation}
The first integral vanishes by Stokes' theorem since $\psi$ is
$\dbar$-closed and $G$ has compact support in $z$ since $g$ has. The
second integral vanishes since $f_1(z)=0$ on $V_z$.

\smallskip 

If $V$ is Cohen-Macaulay, then \eqref{eq:dualcplx} is exact except for at level $p$ and so $(\B_V^{n,\bullet},\debar)$
is exact except for at level $0$ where the cohomology is $\omega_V^{n,0}=\textrm{ker}\, (\debar \colon \B_V^{n,0}\to \B_V^{n,1})$.
Thus, \eqref{RSWkomplexCM} is exact.
\end{proof}

\section{The trace map}\label{sec:pair}
The basic result of this section is the following theorem. It is the key to define our trace map. 

\begin{theorem}\label{thm:parning}
Let $X$ be a reduced complex space of pure dimension $n$. There is a unique map
\begin{equation*}
\wedge \colon \B^{n,q}_X\times \A^{0,q'}_X \to   \W^{n,q+q'}_X \cap \textrm{Dom}\, \debar
\end{equation*}
extending the exterior product on $X_{reg}$. 
\end{theorem}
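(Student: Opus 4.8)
The plan is to reduce everything to the pseudomeromorphic calculus recalled in Section~\ref{sec:prelim}, in particular to Proposition~\ref{multprop}. \emph{Uniqueness} is immediate: two currents in $\W^{n,q+q'}_X$ that agree on $X_{reg}$ coincide by the SEP, so there is at most one such extension, and bilinearity and $\mathcal{E}_X^{0,0}$-bilinearity of the map then follow since they hold on $X_{reg}$. For \emph{existence} the statement is local, so let $V\subset X$ be identified with a pure $n$-dimensional analytic subset of a pseudoconvex $D\subset\C^N$ and let $\varphi\in\A^{0,q'}(V)$, $\psi\in\A^{n,q}(V)$. By Definition~\ref{A-sheaves} and bilinearity we may assume that $\psi$ is a single generating term $\xi_m\wedge\check{\mathscr{K}}_m(\cdots\xi_1\wedge\check{\mathscr{K}}_1(\omega\wedge\xi_0)\cdots)$, and we argue by induction on the number $m$ of integral operators. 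Since multiplication by a smooth form preserves $\W_X$ and $\textrm{Dom}\,\debar$ and commutes, up to sign, with the exterior product on $V_{reg}$, the outermost factor $\xi_m$ may be carried along at the end; thus it is enough to treat $\psi=\omega\wedge\xi_0$ ($m=0$) and $\psi=\check{\mathscr{K}}(\psi')$ where $\psi'$ is a generating term built from $m-1$ operators.

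For the base case $\psi=\omega\wedge\xi_0$, with $\omega$ a structure form and $\xi_0$ smooth, recall that $\omega$ is almost semi-meromorphic and $\varphi\in\A^{0,q'}(V)\subset\W^{0,q'}(V)$, so by Proposition~\ref{multprop} there is a well-defined current $\omega\wedge\varphi\in\W(V)$; we set $\varphi\wedge\psi:=\pm\,\omega\wedge\varphi\wedge\xi_0$, which on $V_{reg}$ is the ordinary exterior product. To see that $\varphi\wedge\psi\in\textrm{Dom}\,\debar$, apply the Leibniz rule for the product of an almost semi-meromorphic current and a current in $\W$: $\debar(\omega\wedge\varphi)=\debar\omega\wedge\varphi\pm\omega\wedge\debar\varphi$, where $\debar\omega$ is, componentwise, $f_{p+r+1}|_V\,\omega_{r+1}$ and hence again almost semi-meromorphic by Proposition~\ref{structureprop}, while $\debar\varphi\in\A^{0,q'+1}(V)\subset\W(V)$ by Theorem~\ref{ASmainthm}; thus both terms lie in $\W(V)$, and wedging with $\xi_0$ and $\debar\xi_0$ stays in $\W(V)$.

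For the inductive step, write $\psi=\check{\mathscr{K}}(\psi')$ with $\check{\mathscr{K}}$ given by a kernel $k=k(z,\zeta)$ of the form \eqref{eq:k}, so that $\check{\mathscr{K}}(\psi')(\zeta)=\pi_*\big(k(z,\zeta)\wedge\psi'(z)\big)$ for the simple projection $\pi\colon V_z\times V_\zeta\to V_\zeta$. The tensor product $\psi'(z)\otimes\varphi(\zeta)$ is pseudomeromorphic and has the SEP, hence lies in $\W(V_z\times V_\zeta)$; since $k$ is almost semi-meromorphic, Proposition~\ref{multprop} yields $k\wedge(\psi'(z)\otimes\varphi(\zeta))\in\W(V_z\times V_\zeta)$, and as $\pi_*$ preserves $\W$ we may define
\begin{equation*}
\varphi\wedge\psi:=\pm\,\pi_*\big(k(z,\zeta)\wedge(\psi'(z)\otimes\varphi(\zeta))\big),
\end{equation*}
which lies in $\W^{n,q+q'}(V_\zeta)$. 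By the projection formula this equals $\pm\,\varphi(\zeta)\wedge\pi_*\big(k\wedge\psi'(z)\big)=\pm\,\varphi\wedge\check{\mathscr{K}}(\psi')$ on $V_{reg}$, so it extends the exterior product there. To verify $\varphi\wedge\psi\in\textrm{Dom}\,\debar$, differentiate and use the Leibniz rules for almost semi-meromorphic$\,\wedge\,\W$ currents and for tensor products, together with the current Koppelman formula $\debar k=[\Delta^V]-p$ (Proposition~\ref{koppelcurrent}, in the version valid on all of $V\times V$):
\begin{equation*}
\debar(\varphi\wedge\psi)=\pm\,\pi_*\big([\Delta^V]\wedge(\psi'\otimes\varphi)\big)\pm\pi_*\big(p\wedge(\psi'\otimes\varphi)\big)\pm\pi_*\big(k\wedge(\debar\psi'\otimes\varphi)\big)\pm\pi_*\big(k\wedge(\psi'\otimes\debar\varphi)\big).
\end{equation*}
Here $\debar\psi'\in\A^{n,*}(V)\subset\W(V)$ by Theorem~\ref{thm:debar}, $\debar\varphi\in\A^{0,*}(V)\subset\W(V)$ by Theorem~\ref{ASmainthm}, and $p$ is almost semi-meromorphic, so the last three terms are in $\W(V_\zeta)$ by Proposition~\ref{multprop} and closedness of $\W$ under $\pi_*$. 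The first term, after identifying $\Delta^V$ with $V$ via $\pi$, equals $\pm\,\varphi\wedge\psi'$, which belongs to $\W(V)$ by the induction hypothesis since $\psi'$ is built from fewer integral operators. Hence $\debar(\varphi\wedge\psi)\in\W(V_\zeta)$, closing the induction. Finally, since the constructed current lies in $\W_X$ and extends the product on $X_{reg}$, uniqueness makes it independent of the chosen representations of $\varphi$ and $\psi$, and the local pieces glue.

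I expect the main obstacle to be precisely the diagonal term $\pi_*\big([\Delta^V]\wedge(\psi'\otimes\varphi)\big)$: since $[\Delta^V]$ is not almost semi-meromorphic this product is not covered by Proposition~\ref{multprop} and must be interpreted through the product already constructed at the previous stage — which is why the induction has to be organized by the number of integral operators in $\psi$ (one could equally well run it on the structure of $\varphi$). A secondary difficulty is the careful bookkeeping of the various Leibniz and commutation rules for the non-smooth currents involved — for almost semi-meromorphic$\,\wedge\,\W$ currents, for tensor products, and for pushforwards under simple projections — together with the auxiliary facts that tensor products of $\W$-currents are again in $\W$ and that the current Koppelman identity of Proposition~\ref{koppelcurrent} holds on all of $V\times V$; all of these are available from the calculus developed in \cite{AS} and \cite{AW}.
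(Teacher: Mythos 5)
Your uniqueness argument and the construction of a $\W$-extension via Proposition~\ref{multprop} and tensor products are reasonable, but the proof has a genuine gap exactly where you suspect it, and the gap is not a technicality: it is the content of the theorem. Your argument for $\varphi\wedge\psi\in\textrm{Dom}\,\debar$ rests on Leibniz rules of the form $\debar(a\wedge\mu)=\debar a\wedge\mu\pm a\wedge\debar\mu$ for $a$ almost semi-meromorphic and $\mu\in\W$, and on a version of Proposition~\ref{koppelcurrent} ``valid on all of $V\times V$''. Neither is available. For an almost semi-meromorphic current the naive Leibniz rule fails in general: the difference between $\debar(a\wedge\mu)$ and the two expected terms is a pseudomeromorphic ``residue'' current supported where $a$ is not smooth (the model case is $\debar(1/f)\neq 0$ even though $1/f$ is $\debar$-closed where smooth), and showing that these defect currents vanish is precisely what requires work. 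Likewise, Proposition~\ref{koppelcurrent} is proved only on $V_{reg}\times V_{reg}$; extending it across the singular set is essentially equivalent to the boundary-term analysis you are trying to avoid. Already in your base case, $\debar(\omega\wedge\varphi)=\debar\omega\wedge\varphi\pm\omega\wedge\debar\varphi$ is an assertion that needs proof, not a citation. Finally, the diagonal term $\pi_*\bigl([\Delta^V]\wedge(\psi'\otimes\varphi)\bigr)$ is not defined by any of the tools you invoke, since $[\Delta^V]$ is not almost semi-meromorphic; you flag this but do not resolve it.

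The paper's proof is organized precisely to overcome these obstacles. It regularizes $[\Delta^V]$ by the current $M^{\lambda}$ built from generators of the diagonal ideal (with $M^{\lambda}|_{\lambda=0}=[\Delta^V]$ by \cite{ALelong}, \cite{ASWY}), forms on $V^{\ell+m+2}$ the single current $T^{\lambda}$ incorporating \emph{all} the kernels occurring in both $\psi$ and $\varphi$ together with the structure form, proves analytic continuation to $\lambda=0$ via a resolution of singularities, and then establishes the SEP with respect to the diagonal, annihilation by the diagonal ideal, and the vanishing $\lim_{\epsilon\to 0}\debar\chi_{\epsilon}\wedge T=0$ all at once by a double induction (over the index $k$ of $\omega_k$ and over $\ell+m$) using the dimension principle together with the factorizations $\omega_k=\alpha_k\omega_{k-1}$ and the codimension bounds on the sets $V^k$; Proposition~\ref{matssats} then descends the result to a current on $V\simeq\Delta^V$. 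If you want to salvage your inductive scheme, you would have to replace each appeal to a Leibniz rule by an argument showing that the corresponding defect current, which is pseudomeromorphic and supported on a set of controlled codimension, vanishes by the dimension principle --- at which point you would essentially be reproving Lemma~\ref{lma:main}.
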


The uniqueness is clear since two currents with the SEP that are equal on $X_{reg}$ are equal on $X$.
It is moreover clear that $\wedge$ is $\mathcal{E}^{0,0}_X$-bilinear. Indeed, if, e.g., $\varphi_1$ and
$\varphi_2$ are sections of $\A_X^{0,q'}$, $\psi$ is a section of $\B_X^{n,q}$, and $\xi_1$ and $\xi_2$
are sections of $\mathcal{E}^{0,0}_X$, then $\psi\wedge (\xi_1\varphi_1+\xi_2\varphi_2)$, $\psi\wedge\xi_1\varphi$,
and $\psi\wedge\xi_2\varphi_2$ have the SEP by Theorem~\ref{thm:parning} and
$\psi\wedge (\xi_1\varphi_1+\xi_2\varphi_2)=\psi\wedge\xi_1\varphi_1 + \psi\wedge\xi_2\varphi_2$ on $X_{reg}$.
We get bilinear pairings of $\C$-vector spaces, $\B_c^{n,n-q}(X)\times \A^{0,q}(X)\to \C$ and 
$\B^{n,n-q}(X)\times \A_c^{0,q}(X)\to \C$, given by $(\psi,\varphi)\mapsto \int_X \psi\wedge\varphi:= \psi\wedge\varphi . 1$,
where $1$ here denotes the function constantly equal to $1$; we will refer to these maps as 
\emph{trace maps on the level of currents}. We also get \emph{trace maps on the level of cohomology}:

\begin{corollary}\label{tracecor}
Let $\varphi$ and $\psi$ be sections of $\A_X^{0,q'}$ and $\B_X^{n,q}$ respectively. Then
$\debar(\psi\wedge\varphi)=\debar\psi\wedge\varphi \pm \psi\wedge\debar\varphi$. Moreover,
there are bilinear maps of $\C$-vector spaces
\begin{equation*}
H^q\big(\A^{0,\bullet}(X), \debar\big) \times H^{n-q}\big(\B_c^{n,\bullet}(X),\debar\big) \to \C,
\end{equation*}
\begin{equation*}
H^q\big(\A_c^{0,\bullet}(X), \debar\big) \times H^{n-q}\big(\B^{n,\bullet}(X),\debar\big) \to \C,
\end{equation*}
given by $([\varphi]_{\debar},[\psi]_{\debar})\mapsto \int_X \psi\wedge \varphi$.
\end{corollary}

\begin{proof}
By Theorem~\ref{thm:parning}, $\debar(\psi\wedge\varphi)$ has the SEP; cf.\ Definition~\ref{def:domain}.
By Theorem~\ref{thm:debar} and \cite[Theorem~1.2]{AS}, respectively, $\debar\psi$ is a section of $\B_X^{n,q+1}$ and 
$\debar\varphi$ is a section of $\A_X^{0,q'+1}$. Thus, $\debar\psi\wedge\varphi$ and $\psi\wedge\debar\varphi$
have the SEP by Theorem~\ref{thm:parning} and so 
$\debar(\psi\wedge\varphi)=\debar\psi\wedge\varphi \pm \psi\wedge\debar\varphi$ since it holds on $X_{reg}$.
The last part of the corollary immediately follows.
\end{proof}

\begin{proof}[Proof of Theorem~\ref{thm:parning}]
We have already noticed that if $\psi|_{X_{reg}}\wedge\varphi|_{X_{reg}}$ has an extension with the SEP, then it is unique.
To see that such an extension exists, let $V$ be a relatively compact open subset of a pure 
$n$-dimensional analytic subset of some pseudoconvex
domain in some $\C^N$.   
Let $\phi=(\phi_1,\ldots,\phi_s)$ be generators for the radical ideal sheaf over $V\times V$
associated to the diagonal $\Delta^V \subset V\times V$. Let
\begin{equation*}
A_{\epsilon}:=\chi(|\phi|/\epsilon) \frac{\partial \log |\phi|^2}{2\pi i} \wedge
(dd^c \log |\phi|^2)^{n-1}.
\end{equation*}
Notice that if $p\colon W\to V\times V$ is a holomorphic map such that, locally on $W$, $p^*\phi=\phi_0\phi'$
for a holomorphic function $\phi_0$ and a non-vanishing holomorphic tuple $\phi'$, then
\begin{equation}\label{pool}
2\pi i p^*A_{\epsilon} = \chi(|\phi_0\phi'|/\epsilon)
\big(d\phi_0/\phi_0 + \partial |\phi'|^2/|\phi'|^2\big)\wedge (dd^c \log |\phi'|^2)^{n-1}.
\end{equation}
Thus, in view of Section~\ref{ssec:PM}, $A:=\lim_{\epsilon\to 0}A_{\epsilon}$ exists and defines an almost semi-meromorphic
current on $V\times V$. Let 
\begin{equation}\label{lut}
M_{\epsilon}:=\debar\chi(|\phi|/\epsilon)\wedge \frac{\partial \log |\phi|^2}{2\pi i} \wedge
(dd^c \log |\phi|^2)^{n-1} = \debar A_{\epsilon} - \chi(|\phi|/\epsilon) (dd^c\log |\phi|^2)^n. 
\end{equation}
Similarly to \eqref{pool} one checks that the limit of the last term on the right-hand side defines an
almost semi-meromorphic current. Thus, the limit $M:=\lim_{\epsilon\to 0}M_{\epsilon}$ exists and defines a pseudomeromorphic
$(n,n)$-current on $V\times V$ supported on $\Delta^V$. Notice that $M$ is the difference of an almost semi-meromorphic current and
the $\debar$-image of such a current. Hence, by Proposition~\ref{multprop}, for any pseudomeromorphic current $\tau$,
$M\wedge\tau$ is a well-defined pseudomeromorphic current.
It is well-known that $M=[\Delta^V]$ on $V_{reg}\times V_{reg}$
and so, in view of the dimension principle, $M=[\Delta^V]$ on $V\times V$; cf.\ \cite[Corollary~1.3]{ASWY}. 

Let $\psi\in \B^{n,q}(V)$ and $\varphi\in\A^{0,q'}(V)$. The tensor product $\psi(w)\wedge\varphi(z)$
is a pseudomeromorphic current on $V\times V$ by Section~\ref{ssec:PM}, and so
$M\wedge\psi(w)\wedge\varphi(z)=\lim_{\epsilon\to 0} M_{\epsilon}\wedge\psi(w)\wedge\varphi(z)$
is a pseudomeromorphic currents on $V\times V$ with support on $\Delta^V$.
Notice also that
since $\psi$ and $\varphi$ are smooth on $V_{reg}$, we have 
\begin{equation}\label{plutten}
M\wedge\psi(w)\wedge\varphi(z)=[\Delta^V]\wedge\psi(w)\wedge\varphi(z)=i_*(\psi|_{V_{reg}}\wedge\varphi|_{V_{reg}})
\end{equation} 
on $V_{reg}\times V_{reg}$,
where $i\colon \Delta^V\to V\times V$ is the inclusion and where we have made the identification $\Delta^V\simeq V$.

\begin{lemma}\label{lma:main}
The pseudomeromorphic currents $M\wedge\psi(w)\wedge\varphi(z)$ and $\debar\big(M\wedge\psi(w)\wedge\varphi(z)\big)$
have the SEP with respect to $\Delta^V$.
\end{lemma}

Let $g$ be a holomorphic function such that $g|_{\Delta^V}=0$. Then $g[\Delta^V]=0=dg\wedge [\Delta^V]$
and so, since $\psi(w)\wedge\varphi(z)$ is smooth on  $V_{reg}\times V_{reg}$ and $M=[\Delta^V]$, we have
\begin{equation}\label{luta}
gM\wedge\psi(w)\wedge\varphi(z)=dg\wedge M\wedge\psi(w)\wedge\varphi(z)=0
\end{equation}
on $V_{reg}\times V_{reg}$.
In fact, by Lemma~\ref{lma:main}, \eqref{luta} holds on $V\times V$ and so, by Proposition~\ref{matssats}
and Lemma~\ref{lma:main} again,
there is a $\mu\in \W(V)$ such that $M\wedge\psi(w)\wedge\varphi(z)=i_*\mu$. Hence,
in view of \eqref{plutten}, $\mu$ is an extension of $\psi|_{V_{reg}}\wedge\varphi|_{V_{reg}}$ to $V$
with the SEP. We will denote the extension by $\psi\wedge\varphi$.

It remains to see that $\psi\wedge\varphi$ is in $\text{Dom}\, \debar$.
However, $\debar\big(M\wedge\psi(w)\wedge\varphi(z)\big)=i_* \debar(\psi\wedge\varphi)$ and 
$\debar\big(M\wedge\psi(w)\wedge\varphi(z)\big)$
has the SEP with respect to $\Delta^V$ by Lemma~\ref{lma:main}. It follows that $\debar(\psi\wedge\varphi)$
has the SEP on $V$, i.e., $\psi\wedge\varphi$ is in $\text{Dom}\, \debar$.
\end{proof}

\begin{proof}[Proof of Lemma~\ref{lma:main}]
We may assume, cf.\ Definition~\ref{A-sheaves} and the end of Section~\ref{ssec:koppel0qX}, that
\begin{equation*}
\psi=\xi_m\wedge \check{\mathscr{K}}_{m} \left(\cdots \xi_1 \wedge \check{\mathscr{K}}_{1}(\omega\wedge \xi_0)\cdots\right),
\quad
\varphi=\tilde{\xi}_{\ell}\wedge \mathscr{K}_{\ell} \left(\cdots \tilde{\xi}_1 \wedge \mathscr{K}_{1}(\tilde{\xi}_0)\cdots\right),
\end{equation*}
where $\xi_i$ and $\tilde{\xi}_j$ are smooth $(0,*)$-forms, $\omega=\sum_k\omega_k$ is a structure form
associated with a free resolution \eqref{eq:resolOJ}, and
$\check{\mathscr{K}}_{i}$ and $\mathscr{K}_{j}$ are integral operators for $(n,*)$-forms and 
$(0,*)$-forms respectively.
Let $\check{k}_j(w^{j-1},w^j)$ be the integral kernel corresponding to $\check{\mathscr{K}}_{j}$
and let $k_j(z^j,z^{j-1})$ be the integral kernel corresponding to $\mathscr{K}_{j}$;
$w^j$ and $z^j$ are coordinates on $V$.
We will assume that for each $j$, $z^{j}\mapsto k_{j+1}(z^{j+1},z^j)$ has compact support where 
$z^{j}\mapsto k_j(z^j,z^{j-1})$ is defined and similarly for $\check{k}_j$; 
possibly we will have to multiply by a smooth cut-off function
that we however will suppress. Now, consider
\begin{equation}\label{hund0}
T:= \lim_{\epsilon\to 0} M_{\epsilon}(z^{\ell},w^m)\wedge
\bigwedge_{j=1}^m \check{k}_j(w^{j-1},w^j) \wedge \omega(w^0) \wedge \bigwedge_{j=1}^{\ell} k_j(z^j,z^{j-1}),
\end{equation}
which is a pseudomeromorphic current on $V^{\ell+m+2}$ supported on $\{z^{\ell}=w^m\}$; 
cf.\ Proposition~\ref{multprop}.\footnote{In this proof 
$V^j$ will mean either the Cartesian product of $j$ copies of $V$ or the $j^{\textrm{th}}$ 
set in \eqref{ballong}. We hope that it will be clear from the context what we are aiming at.}
Notice that $M(z^{\ell},w^m)\wedge\psi(w^m)\wedge\varphi(z^{\ell})=\pi_* (T\wedge\xi)$, where 
$\pi\colon V^{\ell+m+2}\to V_{z^{\ell}}\times V_{w^m}$ is the natural projection and $\xi$ is a suitable 
smooth form on $V^{\ell+m+2}$. In view of the paragraph following the dimension principle in Section~\ref{ssec:PM},
 it suffices to show that $T$ and $\debar T$ have the SEP with respect to $\{z^{\ell}=w^m\}$.
Let $h=h(z^{\ell},w^m)$ be a germ of a holomorphic tuple in $V\times V$ that is generically non-vanishing on the diagonal;
we will consider $h$ also as a germ of a tuple on $V^{\ell+m+2}$ and we denote its zero-set there by $H$.
In view of Section~\ref{ssec:PM}, what we are to show is that  
$\mathbf{1}_H T=\mathbf{1}_H \debar T=0$.

Let $T_k$ be the part of $T$ corresponding to $\omega_k(w^0)$ and notice that $T_k$ is a pseudomeromorphic
current of bidegree $(*,n(\ell+m+1)-m-\ell+k)$. We will show that $T$ and $\debar T$ have the SEP
by double induction over $\ell+m$ and $k$.

Assume first that $\ell=m=0$. Then $T_k=M(z^0,w^0)\wedge\omega_k(w^0)$ and we know that $T_k=[\Delta^V]\wedge\omega_k(w^0)$
for $w^0\in V_{reg}$ since $\omega_k(w^0)$ is smooth there. Hence, since $[\Delta^V]$ has the SEP with respect to $\Delta^V$,
$\mathbf{1}_H T_k=0$ outside of $\{w^0\in V_{sing}\}$
and it follows that $\text{supp} (\mathbf{1}_H T_k)\subset \{z^0=w^0\in V_{sing}\}$, which has codimension $\geq n+1$ in
$V\times V$. Since $\mathbf{1}_H T_0$ has bidegree $(*,n)$, the dimension principle implies that $\mathbf{1}_H T_0=0$.
By Proposition~\ref{structureprop}, $\omega_k=\alpha_k\omega_{k-1}$, where $\alpha_k$ is smooth 
outside of $V^k$, which has codimension $\geq k+1$ in $V$. 
Hence, $\text{supp}\,\mathbf{1}_H T_1\subset\{w^0\in V^1\}$, which has codimension $\geq n+2$ in $V\times V$.
Since $\mathbf{1}_H T_1$ has bidegree $(*,n+1)$, the dimension principle implies that also $\mathbf{1}_H T_1=0$.
Continuing in this way, we get that $\mathbf{1}_H T_k=0$. Hence, $T=M(z^0,w^0)\wedge\omega(w^0)$ has the SEP with respect to 
$\Delta^V$ and arguing as in the paragraph following Lemma~\ref{lma:main} we see that $T=i_*\omega$.
Since $\debar\omega=f\omega$ by Proposition~\ref{structureprop}, it follows that
$\debar T=i_*\debar\omega=i_*f\omega$ and thus, $\debar T$ has the SEP with respect to $\Delta^V$.

Let now $\ell+m=s\geq 1$ in \eqref{hund0} and assume that $T$ and $\debar T$ have the SEP with respect to $\{z^{\ell}=w^m\}$ for
$\ell+m\leq s-1$. Let $1\leq r\leq \ell$;
if $z^{r-1}\neq z^{r}$ then
$k_r(z^r,z^{r-1})$ is a smooth form times some structure form $\tilde{\omega}(z^{r-1})$. Hence,
outside of $\{z^r=z^{r-1}\}$,
$T$ is a smooth form times the {\em tensor product} of
\begin{equation*}
\tilde{\omega}(z^{r-1}) \bigwedge_{j=1}^{r-1} k_j(z^j,z^{j-1})
\end{equation*}
and some current $\tilde{T}$, where $\tilde{T}$ is of the form \eqref{hund0} with $\ell+m=s-r$
depending on the variables $z^r,\ldots,z^{\ell}$ and $w^0,\ldots,w^m$.
From the induction hypothesis it thus follows that $\mathbf{1}_H T$ and 
$\mathbf{1}_H\debar T$ have supports contained in 
$\{z^0=\ldots=z^{\ell}\}$.
Similarly, let $1\leq r\leq m$. If $w^{r-1}\neq w^r$ then $\check{k}_r(w^{r-1},w^r)$ is a smooth form times
some structure form $\tilde{\omega}(w^r)$ and so, outside of $\{w^{r-1}=w^{r}\}$, $T$ is a smooth form times
the tensor product of 
\begin{equation*}
\bigwedge_{j=1}^{r-1} \check{k}_j(w^{j-1},w^j)\wedge \omega(w^0)
\end{equation*} 
and a current of the form \eqref{hund0} with $\ell+m=s-r$ depending on the variables $z^0,\ldots,z^{\ell}$ and $w^r,\ldots,w^m$. 
Thus, again from the induction hypothesis,
it follows that $\mathbf{1}_H T$ and $\mathbf{1}_H\debar T$ have supports contained in
$\{w^0=\ldots=w^{m}\}$. In addition, since $T$ vanishes outside of $\{z^{\ell}=w^m\}$,
we have that the supports of $\mathbf{1}_H T$ and $\mathbf{1}_H\debar  T$
must be contained in the diagonal $\Delta^V=\{z^0=\cdots=z^{\ell}=w^m=\cdots=w^0\}\subset V^{\ell+m+2}$.
Hence, we see that $\mathbf{1}_H T$ and $\mathbf{1}_H\debar T$ have supports contained
in $\Delta^V\cap H$, which has codimension $\geq n(s+1)+1$. Since $\mathbf{1}_H T_0$ has bidegree $(*,n(s+1)-s)$ and 
$\mathbf{1}_H\debar T_0$ has bidegree $(*,n(s+1)-s+1)$ we have 
$\mathbf{1}_H T_0=\mathbf{1}_H\debar T_0=0$ by the dimension principle. Since $T_1=\pm \alpha_1(w^0)T_0$ and $\alpha_1$ is smooth 
outside of $V^1$, which has codimension $\geq 2$ in $V$, it follows that $\mathbf{1}_H T_1$ and $\mathbf{1}_H\debar T_1$
have supports in $\Delta^V\cap\{w^0\in V^1\}$, which then has codimension $\geq n(s+1)+2$. The dimension principle then
shows that $\mathbf{1}_H T_1=\mathbf{1}_H\debar T_1=0$. By induction over $k$, using that $T_k=\pm \alpha_k(w^0)T_{k-1}$
with $\alpha_k$ smooth outside of $V^k$, that $\text{codim}_V\, V^k\geq k+1$, and the dimension principle, we obtain 
$\mathbf{1}_H T_k=\mathbf{1}_H\debar T_k=0$ for all $k$. 
\end{proof}

\section{Serre duality}

\subsection{Local duality}\label{ssec:locdual1}
Let $V$ be a pure $n$-dimensional analytic subset of a pseudoconvex domain $D\subset \C^N$, let 
$D'\Subset D$ be a strictly pseudoconvex subdomain, and let $V'=V\cap D'$.  
Consider the complexes
\begin{equation}\label{eq:cplx1}
0\to \A^{0,0}(V') \stackrel{\debar}{\longrightarrow} \A^{0,1}(V') 
\stackrel{\debar}{\longrightarrow} \cdots \stackrel{\debar}{\longrightarrow} \A^{0,n}(V') \to 0
\end{equation}
\begin{equation}\label{eq:cplx2}
0 \to \B^{n,0}_{c}(V') \stackrel{\debar}{\longrightarrow}
\B^{n,1}_{c}(V') \stackrel{\debar}{\longrightarrow} \cdots
\stackrel{\debar}{\longrightarrow} \B^{n,n}_{c}(V') \to 0.
\end{equation}
From Corollary~\ref{tracecor} we have the trace map
\begin{equation}\label{eq:pair1}
Tr\colon H^0\left(\A^{0,\bullet}(V'),\debar\right) \times H^n\left(\B_{c}^{n,\bullet}(V'),\debar\right) \to \C, \quad 
Tr([\varphi],[\psi]) = \int_{V'} \varphi \psi.
\end{equation}
By \cite[Theorem~1.2]{AS}
the complex \eqref{eq:cplx1} is exact except for at the level $0$ where the cohomology
is $\hol(V')$, cf.\ the introduction. 

\begin{theorem}\label{thm:locdual1}
The complex \eqref{eq:cplx2} is exact except for at the top level and
the pairing \eqref{eq:pair1} makes $H^n(\B^{n,\bullet}_{c}(V'))$ the topological dual of the 
Frech\'{e}t space $H^0(\A^{0,\bullet}(V'))=\hol(V')$; in particular \eqref{eq:pair1} is non-degenerate.
\end{theorem}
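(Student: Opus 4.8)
The plan is to establish Theorem~\ref{thm:locdual1} in two halves: first the exactness of \eqref{eq:cplx2} away from the top level, and then the duality statement for the top cohomology. For the exactness, I would invoke the semi-global homotopy formula of Theorem~\ref{thm:Homotopy} with $V'=V\cap D'$ and $D'\Subset D$ strictly pseudoconvex. Given a $\debar$-closed $\psi\in\A^{n,q}_c(V')$ with $q<n$, the formula $\psi=\debar\check{\K}\psi+\check{\K}(\debar\psi)+\check{\Proj}\psi$ reduces the problem to understanding $\check{\Proj}\psi$. Choosing the weight $g$ holomorphic in $\zeta$ and with compact support in $D_z$ (as in the proof of Theorem~\ref{thm:resol}), for degree reasons \eqref{eq:p} shows $\check{\Proj}\psi$ is a finite sum $\sum_r\omega_r\wedge\xi_r$ with $\xi_r$ holomorphic; since $\psi$ is $\debar$-closed and $q<n$, a degree count forces the relevant $\omega_r$-terms to contribute nothing to cohomology, or more precisely one shows $\check{\Proj}\psi$ is itself $\debar$-exact in $\A^{n,\bullet}_c(V')$ by a further application of the homotopy formula, exactly paralleling how \cite{AS} handles the $(0,q)$ case. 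The compact support of the primitive $\check{\K}\psi$ follows because $g$ has compact support in $D_z$ and $\psi$ has compact support, so the kernel localizes.

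For the duality statement, the strategy is to identify both sides with known dual pairs. The left factor is already pinned down: $H^0(\A^{0,\bullet}(V'))=\hol(V')$ by Theorem~\ref{ASmainthm}(iii), and this is a Fréchet space (a closed subspace of all holomorphic functions on $V'$ with the topology of uniform convergence on compacts — or, after embedding, the restriction of holomorphic functions on $D'$). The plan is to show that the trace pairing \eqref{eq:pair1} identifies $H^n(\A^{n,\bullet}_c(V'))$ with the topological dual $\hol(V')'$. One direction — that the pairing is well-defined and induces a map $H^n(\A^{n,\bullet}_c(V'))\to\hol(V')'$ — is already in hand from Section~\ref{sec:pair}, where it is shown the trace descends to cohomology using that $\psi\wedge\tilde\varphi\in\textrm{Dom}\,\debar$. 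For surjectivity and injectivity, I would use the fact that by Theorem~\ref{thm:resol} the complex $(\A^{n,\bullet}_{V'},\debar)$ computes $\E xt^{p+\bullet}(\hol_D/\J_V,\mathit{\Omega}^N_D)$ and in particular $(\A^{n,\bullet},\debar)$ is a fine resolution of $\omega^{n,0}_{V'}$ in the Cohen--Macaulay case — but since here $V'$ need not be Cohen--Macaulay, I would instead argue directly via a Serre-type argument on the strictly pseudoconvex $V'$: the key analytic input is that $\debar$ has closed range on the relevant spaces (which follows from the strict pseudoconvexity of $D'$ together with the $L^2$ or integral-formula estimates implicit in the construction of $\check{\K},\check{\Proj}$), so that $H^n(\A^{n,\bullet}_c(V'))$ is Hausdorff and the pairing between it and $H^0(\A^{0,\bullet}(V'))$ is the dual pairing of a complex with its topological dual.

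Concretely, I would run the following: embed $V'$ in $D'$, resolve $\hol_{D'}/\J_V$, and transport the question to $D'$ using the structure form $\omega$ and the isomorphism $i_*\omega_k=R_k\wedge dz$; then the pairing \eqref{eq:pair1} becomes, via \eqref{strut2}, the classical pairing between $\debar$-cohomology of $(0,\bullet)$-forms on $V'$ valued in appropriate bundles and compactly supported residue-current-valued classes on $D'$. On $D'$ strictly pseudoconvex this is a version of the local duality theorem for $\E xt$ sheaves of Andersson--Wulcan type (cf.\ \cite{ANoetherDual,ASWY}), and the non-degeneracy there is known. Injectivity on the $\A^{n,\bullet}_c$ side is the statement that a $\debar$-closed $\psi\in\A^{n,n}_c(V')$ with $\int_{V'}\varphi\psi=0$ for all $\varphi\in\hol(V')$ is $\debar$-exact with compact support; this follows from the surjectivity of the first map and a duality/Hahn--Banach argument once closed range is established.

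The main obstacle I anticipate is precisely the closed-range / Hausdorffness input and the passage from the smooth strictly pseudoconvex picture to the current-valued $\A$-sheaf picture: one must know that the relevant $\debar$-complexes of $\A$-currents have the same cohomology as their smooth counterparts on $X_{reg}$ extended by the SEP, and that no cohomology is lost or gained across $X_{sing}$. This is where Proposition~\ref{prop:A}, Theorem~\ref{thm:debar}, and the homotopy formula do the heavy lifting, but assembling them into a clean duality — particularly matching topologies so that $\hol(V')$ appears with its genuine Fréchet topology and its dual is realized — is the delicate part. I expect the proof to reduce the duality to the already-known algebraic local duality for $\E xt$-sheaves plus the resolution statement of Theorem~\ref{thm:resol}, with the integral operators $\check{\K},\check{\Proj}$ providing the concrete homotopies that make everything explicit.
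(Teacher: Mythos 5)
Your outline of the exactness part has the right skeleton (homotopy formula, kill $\check{\mathscr{P}}\psi$, read off a compactly supported primitive), but the weight is chosen in the wrong variable, and this breaks both halves of the argument. The operators $\check{\mathscr{K}},\check{\mathscr{P}}$ integrate over $z$ and produce a current in $\zeta$. To make $\check{\mathscr{P}}\psi=\int_{V_z}p(z,\zeta)\wedge\psi(z)$ vanish for $q<n$ one takes $g$ \emph{holomorphic in $z$}: then $p$ carries no $d\bar z_j$, so the integral over the $n$-dimensional $V_z$ against the $(n,q)$-form $\psi(z)$ vanishes unless $q=n$; and to make $\check{\mathscr{K}}\psi$ compactly supported one takes $g$ \emph{compactly supported in $\zeta$}, the output variable. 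Your choice (holomorphic in $\zeta$, compact support in $D_z$) is the one used in the proof of Theorem~\ref{thm:resol} for the non-compactly-supported complex; with it $\check{\mathscr{P}}\psi$ is of the form $\omega_q\wedge(\text{holomorphic})$ and is in general a nonzero representative of an $\E xt^{p+q}$-class, so the assertion that a "degree count forces the $\omega_r$-terms to contribute nothing to cohomology" is unjustified (and compact support of $g$ in $z$ does not give compact support of $\check{\mathscr{K}}\psi$ in $\zeta$). The fix is just to swap the roles of the variables, but as written the argument does not go through.

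The duality half is where the proposal is genuinely incomplete: it defers the two essential steps to "closed range'' and "known local duality for $\E xt$-sheaves,'' which is both vague and partly circular, since Theorem~\ref{thm:resol} computes the cohomology of $\A^{n,\bullet}(V')$ \emph{without} supports, whereas the statement to be proved is precisely the compactly supported counterpart. The paper's proof needs no closed-range input. Injectivity is direct: with $g$ holomorphic in $z$, the term $\int_{V_z}p(z,\zeta)\wedge\psi(z)$ is a sum of pairings of $\psi$ against functions holomorphic near $\operatorname{supp}\psi$, so the hypothesis $\int\varphi\psi=0$ for all $\varphi\in\hol(V')$ kills it and the homotopy formula gives $\psi=\debar\check{\mathscr{K}}\psi$ with compactly supported primitive. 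Surjectivity is the real content and is absent from your plan: one lifts a continuous functional $\lambda$ on $\hol(V')\cong\hol(D')/\J(D')$ to $\hol(D')$, represents it by Hahn--Banach as a measure $\mu$ supported near a compact $K\Subset D'$, and then uses the projection kernel $\tilde p(z,\zeta)=\sum_k\omega_k(\zeta)\wedge p_k(z,\zeta)$ (holomorphic in $z\in U(K)$, compactly supported in $\zeta$) together with the uniform convergence of $f_\epsilon(z)=\int_{V'_\zeta}\chi(|h|/\epsilon)\tilde p(z,\zeta)f(\zeta)$ to a holomorphic extension of $f$ near $K$ (a regularization result from \cite{JEBHS}) to obtain, after Fubini, $\lambda(f)=\int_{V'}f\,\psi$ with the explicit element $\psi=\sum_k\omega_k\wedge\int_z p_k(z,\cdot)\,d\mu(z)\in\A^{n,n}_{c}(V')$. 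Without this construction the identification of $H^n(\A^{n,\bullet}_c(V'))$ with the full topological dual of $\hol(V')$ is not established.
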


\begin{proof}
Let $\psi\in \B^{n,q}_{c}(V')$ be $\debar$-closed. Moreover, let $g$
be a weight 
in $D''\times D'$, where $D''\subset D'$ is a neighborhood of
$\textrm{supp}\, \psi$, such that $g$ is holomorphic in $z$ and has
compact support in $D'_\zeta$, cf.\ Section~\ref{ssec:koppelCn}, and let $k(z,\zeta)$ and $p(z,\zeta)$
be the integral kernels defined by \eqref{eq:k} and \eqref{eq:p},
respectively. 
Since $\psi$ has compact support in $D''$, Theorem~\ref{thm:Homotopy} shows that
\begin{equation}\label{krut}
\psi(\zeta) = \debar_{\zeta}\int_{V'_z} k(z,\zeta)\wedge \psi(z) + 
\int_{V'_z} k(z,\zeta)\wedge \debar\psi(z) + \int_{V'_z} p(z,\zeta)\wedge \psi(z),
\end{equation}
holds on $V'$. The second term on the right hand side vanishes since $\debar \psi=0$. 
Since $g$ is holomorphic in $z$ the kernel $p$ has degree $0$ in $d\bar{z}_j$ and hence, also
the last term vanishes if $q\neq n$.
The first integral on the right hand side is in $\B^{n,q-1}_{c}(V')$ since $g$ has compact support in 
$D'_{\zeta}$ and so \eqref{eq:cplx2} is exact except for at level $n$.

To see that $H^n(\B^{n,\bullet}_{c}(V'))$ is the topological dual of $\hol(V')$,
recall that the topology on $\hol(V')\cong \hol(D')/\mathcal{J}(D')$ is the quotient topology,
where $\J_V$ be the sheaf in $D$ associated with $V\subset D$.
It is clear that each $[\psi]\in H^n(\B^{n,\bullet}_{c}(V'))$ yields a continuous linear functional on $\hol(V')$
via \eqref{eq:pair1}. Moreover, if $q=n$ and $\int_{V'} \varphi\psi=0$ for all $\varphi\in \hol(V')$ then, 
since $p(z,\zeta)$ is holomorphic in $z$ by the choice of $g$, the last integral on the right hand side of \eqref{krut}
vanishes and thus $[\psi]=0$. Hence, $H^n(\B^{n,\bullet}_{c}(V'))$ is a subset of the topological dual of $\hol(V')$.

To see that there is equality, let $\lambda$ be a continuous linear functional on $\hol(V')$. By composing with the projection
$\hol(D') \to \hol(D')/\mathcal{J}(D')$ we get a continuous functional $\tilde{\lambda}$ on $\hol(D')$.
By definition of the topology on $\hol(D')$, $\tilde{\lambda}$ is carried by some compact subset $K\Subset D'$.
By the Hahn-Banach theorem, $\tilde{\lambda}$ can be extended to a 
continuous linear functional on $C^0(D')$ and so it is given as integration against some measure $\mu$ on $D'$
that has support in a neighborhood $U(K)\Subset D'$ of $K$.
Let $\tilde g$ be a weight in $U(K)\times D'$ 
that 
depends holomorphically on $z\in U(K)$ and that has compact support in
$D'_\zeta$, and let $\tilde{p}(z,\zeta)$ be the integral kernel
defined from $\tilde g$ as in \eqref{eq:p}, and let $\mathscr P$ be
the corresponding integral operator.  
Let $f\in \hol(V')$ and define the sequence $f_{\epsilon}(z)\in \hol(K)$ by
\begin{equation*}
f_{\epsilon}(z) = \int_{V'_{\zeta}} \chi_\epsilon(\zeta)
\tilde{p}(z,\zeta) f(\zeta), 
\end{equation*}
where, as above, $\chi_\epsilon=\chi(|h|/\epsilon)$ and $h=h(\zeta)$ is a holomorphic tuple such that
$\{h=0\}=V_{sing}$.
For each $z$ in a neighborhood in $V'$ of $K\cap V'$ we have that
$\lim f_{\epsilon}(z)=\mathscr{P} f (z)=f(z)$  
by \cite[Theorem~1.4]{AS}. We claim that $f_{\epsilon}$ in fact converges uniformly 
in a neighborhood of $K$ in $D'$ to some $\tilde{f}\in \hol(K)$, which then is an extension of $f$ to a neighborhood 
in $D'$ of $K$. 
To see this, first notice by \eqref{eq:p} that 
$\tilde{p}(z,\zeta)$ is a sum of terms $\omega_k(\zeta)\wedge p_k(z,\zeta)$ where
$p_k(z,\zeta)$ is smooth in both variables and holomorphic for $z\in U(K)$. 
By Proposition~\ref{structureprop}, the $\omega_k$ are almost semi-meromorphic. The claim then follows from a simple instance
of \cite[Theorem 1]{JEBHS}\footnote{Take $p=0$, $q=1$, and $\mu=1$ in this theorem.}. 
We now get
\begin{eqnarray*}
\lambda(f) &=& \lim_{\epsilon\to 0}\int_z f_{\epsilon}(z) d\mu(z) = 
\lim_{\epsilon\to 0}\int_{z} \int_{V'_{\zeta}} \chi_\epsilon(\zeta)
\tilde{p}(z,\zeta) f(\zeta) d\mu(z) \\
&=&
\lim_{\epsilon\to 0}\int_{V'_{\zeta}} f(\zeta) \chi_\epsilon(\zeta)
\int_{z} \tilde{p}(z,\zeta) d\mu(z) \\
&=& 
\lim_{\epsilon\to 0}\int_{V'_{\zeta}} f(\zeta) \chi_\epsilon(\zeta)
\sum_k \omega_k(\zeta)\wedge \int_{z} p_k(z,\zeta) d\mu(z) \\
&=&
\int_{V'_{\zeta}} f(\zeta) \sum_k \omega_k(\zeta)\wedge \int_{z} p_k(z,\zeta) d\mu(z).
\end{eqnarray*}
But $\zeta \mapsto \int_{V_z} p_k(z,\zeta) d\mu(z)$ is smooth and compactly supported in $D'$ 
and so $\lambda$ is given as integration against
some element $\psi \in \B_{c}^{n,n}(V')$; hence $\lambda$ is realized by the cohomology class $[\psi]$ and the
theorem follows.
\end{proof}

\begin{corollary}\label{rmk:locdual1}
Let $F\to V$ be a vector bundle, $\F=\hol(F)$ the associated locally free $\hol$-module, and $\F^*=\hol(F^*)$.
Then the following pairing is non-degenerate
\begin{equation*}
Tr\colon H^0(V', \F) \times H^n\big (\F^*\otimes
\B^{n,\bullet}_{c}(V')\big ) \to \C,  \quad ([\varphi],[\psi]) \mapsto \int_{V'}\varphi \psi.
\end{equation*}
\end{corollary}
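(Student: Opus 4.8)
The plan is to reduce the bundle-valued statement to the scalar case already proved in Theorem~\ref{thm:locdual1}. First I would observe that $V'$ is Stein, being a closed analytic subset of the strictly pseudoconvex — hence Stein — domain $D'$, and that $\overline{V'}$ is compact. By Cartan's Theorem A together with this relative compactness there is a finite-rank surjection $\hol_{V'}^{\,m}\twoheadrightarrow \F|_{V'}$; since $\F$ is locally free one has $\mathscr{E}xt^k(\F,\cdot)=0$ for $k\ge 1$, so $\textrm{Ext}^1(\F,\cdot)=H^1(V',\mathscr{H}om(\F,\cdot))$, which vanishes on the Stein space $V'$ by Cartan's Theorem B. Hence the surjection splits and $\hol_{V'}^{\,m}\cong \F\oplus\G$. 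Dualizing gives $\hol_{V'}^{\,m}\cong \F^*\oplus\G^*$, and if the inclusion of the $\F^*$-summand is chosen to be the transpose of the projection onto the $\F$-summand, then these two decompositions are compatible with the canonical pairings: the standard bilinear form on $\hol_{V'}^{\,m}$ restricts to the canonical pairing $\F\otimes\F^*\to\hol_{V'}$ on corresponding summands and has vanishing cross-terms $\F\otimes\G^*$ and $\G\otimes\F^*$.

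Next I would tensor the complexes $(\A^{0,\bullet}_{V'},\debar)$ and $(\A^{n,\bullet}_{V'},\debar)$ with this splitting. Since $\debar$ acts $\hol$-linearly after passing to local holomorphic frames, $\F\otimes\A^{0,\bullet}_{V'}$ and $\F^*\otimes\A^{n,\bullet}_{V'}$ are direct summands, as complexes of fine sheaves, of $(\A^{0,\bullet}_{V'})^{m}$ and $(\A^{n,\bullet}_{V'})^{m}$ respectively; by \cite[Corollary~1.3]{AS} the former is still a resolution of $\F$. Taking global sections — with compact support on the $(n,\bullet)$-side — and then cohomology, operations that commute with finite direct sums and carry direct summands to direct summands, one finds that $H^0(V',\F)=H^0(\F\otimes\A^{0,\bullet}(V'))$ is a direct summand of $\hol(V')^{m}$ and that $H^n(\F^*\otimes\A^{n,\bullet}_{c}(V'))$ is a direct summand of $H^n(\A^{n,\bullet}_{c}(V'))^{m}$, in each case compatibly with the trace map because of the compatibility set up in the previous paragraph. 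Consequently the pairing of the corollary is an orthogonal direct summand of the $m$-fold orthogonal sum of the pairing \eqref{eq:pair1}.

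Finally, the $m$-fold orthogonal sum of \eqref{eq:pair1} is non-degenerate because \eqref{eq:pair1} is, by Theorem~\ref{thm:locdual1}; and an orthogonal direct sum of pairings $P_1\oplus P_2$ is non-degenerate if and only if each $P_i$ is. Applying this with $P_1$ the pairing of the corollary finishes the argument.

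The step I expect to be the main obstacle — really the only point requiring care — is the bookkeeping in matching the algebraic decomposition with the trace map: one must arrange the splitting of $\hol_{V'}^{\,m}\to\F$ and its dual so that the canonical pairing on $\F\otimes\F^*$ is exactly the restriction of the standard pairing on $\hol_{V'}^{\,m}$ and the two cross pairings vanish; once this is correctly in place, the passage through $\otimes\,\A^{\bullet}$, through (compactly supported) global sections, and through cohomology is purely formal. As an alternative one could bypass the reduction and simply rerun the proof of Theorem~\ref{thm:locdual1} with $\F$ carried along throughout — the Koppelman operators extend to $\F^*$-valued currents once one uses Hefer forms valued in the relevant $\textrm{Hom}$-bundles, and the Hahn–Banach part applies verbatim to $\F^*$-valued functionals — but the direct-summand route seems cleaner and shorter.
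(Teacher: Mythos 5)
Your argument is correct, but it is genuinely different from what the paper intends. The paper offers no written proof of the corollary: the authors regard it as immediate because every step of the proof of Theorem~\ref{thm:locdual1} (the Koppelman formula \eqref{krut}, the holomorphic-in-$z$ weight, the Hahn--Banach/measure argument) is local or semi-global and therefore goes through verbatim with $\F$ carried along, using local holomorphic frames and $\mathrm{Hom}$-bundle--valued kernels --- this is exactly the alternative you sketch in your last sentence, and it is the route the introduction alludes to (``it causes no problems to insert a locally free sheaf''). Your main route instead reduces to the scalar case by splitting $\F$ off as a direct summand of $\hol_{V'}^{\,m}$: since $V'=V\cap D'$ is a closed analytic subset of the Stein domain $D'$ it is Stein, Cartan~A plus compactness of $\overline{V'}$ gives the finite surjection, and the vanishing of $\mathrm{Ext}^1(\F,\G)=H^1(V',\mathscr{H}om(\F,\G))$ splits it; the holomorphic idempotent $e$ and its transpose then make the decompositions of $\hol_{V'}^{\,m}$ and of its dual orthogonal for the standard pairing, and since tensoring, (compactly supported) global sections, and cohomology are additive, the corollary's pairing becomes an orthogonal direct summand of the $m$-fold sum of \eqref{eq:pair1}, whence non-degenerate. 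All the individual steps check out, including the final one (an orthogonal summand of a non-degenerate pairing is non-degenerate: pair a nonzero class in one summand against the whole space and kill the cross term). What your approach buys is that one never has to re-inspect the integral-formula machinery with bundle coefficients --- the reduction is purely formal once the splitting is in place; what it costs is the use of Cartan~A/B and the global Steinness of $V'$, which the paper's intended argument does not need since it only uses that $\F$ is \emph{locally} free. Either way the corollary stands.
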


By Theorem~\ref{thm:resol}, if $X$ is Cohen-Macaulay, 
then the complex $(\F^*\otimes \B^{n,\bullet}_V, \debar)$ is a resolution of $\F^*\otimes \omega^{n,0}_V$
and so we get a non-degenerate pairing
\begin{equation*}
H^0(V', \F) \times H^n_{c}(V', \F^*\otimes \omega^{n,0}_{V}) \to \C.
\end{equation*}

\subsection{Global duality}\label{ssec:globdual1}
From the local duality an abstract global duality follows by a patching argument using
\v{C}ech cohomology, see \cite{RaRu}, cf.\ also \cite[Theorem~(I)]{AK}. 
To see that this abstract global duality is realized by Theorem~\ref{thm:main2}
we will make this patching argument explicit using a perhaps
non-standard formalism for \v{C}ech cohomology; cf.\ \cite[Section~7.3]{Horm}

Let $\F$ be a sheaf on $X$ and let $\V=\{V_j\}$ be a locally finite covering of $X$.
We let $C^k(\V, \F)$ be the group of formal sums
\begin{equation*}
\sum_{i_0 \cdots i_k} f_{i_0\cdots i_k} V_{i_0}\wedge \cdots \wedge V_{i_k}, \quad  f_{i_0\cdots i_k}
\in \F(V_{i_0}\cap \cdots \cap V_{i_k})
\end{equation*}
with the suggestive computation rules, e.g., $f_{12}V_1\wedge V_2 + f_{21}V_2\wedge V_1= (f_{12}-f_{21})V_1\wedge V_2$.
Each element of $C^k(\V, \F)$ thus has a unique representation of the form
\begin{equation*}
\sum_{i_0 < \cdots < i_k} f_{i_0\cdots i_k} V_{i_0}\wedge \cdots \wedge V_{i_k}
\end{equation*}
that we will abbreviate as $\sum'_{|I|=k+1} f_IV_I$.
The coboundary operator $\delta \colon C^k(\V, \F) \to C^{k+1}(\V, \F)$ can in this formalism
be taken to be the formal wedge product 
\begin{equation*}
\delta (\sum'_{|I|=k+1} f_I V_I) = (\sum'_{|I|=k+1} f_I V_I) \wedge (\sum_j V_j).
\end{equation*}
If $\V$ is a Leray covering for $\F$, then $H^k(C^{\bullet}(\V, \F), \delta)\cong H^k(X, \F)$.
Indeed, let $(\F^{\bullet},d)$ be a flabby resolution of $\F$. Then $H^k(X,\F)=H^k(\F^{\bullet}(X),d)$ and
applying standard homological algebra to the double complex
$C^{\bullet}(\V, \F^{\bullet})$ one shows that $H^k(C^{\bullet}(\V,\F),\delta)\simeq H^k(\F^{\bullet}(X), d)$. 
If $\F$ is fine, i.e., a $\mathcal{E}^{0,0}_X$-module, then the complex $(C^{\bullet}(\V, \F), \delta)$
is exact except for at level $0$ where $H^0(C^{\bullet}(\V, \F), \delta)\cong H^0(X, \F)$.

\smallskip

Let $\G'$ be a precosheaf on $X$. Recall, see, e.g., \cite[Section~3]{AK}, 
that a precosheaf of abelian groups is an 
assignment that to each open set $V$ associates an abelian group $\G'(V)$, together with
inclusion maps $i^V_W\colon \G'(V)\to \G'(W)$ for $V\subset W$ such that 
$i^{V'}_W=i^{V}_W i^{V'}_V$ if $V'\subset V\subset W$. 
We define $C^{-k}_c(\V, \G')$ to be the group of formal sums
\begin{equation*}
\sum_{i_0 \cdots i_k} g_{i_0\cdots i_k} V^*_{i_0}\wedge \cdots \wedge V^*_{i_k}, 
\end{equation*}
where $g_{i_0\cdots i_k} \in \G'(V_{i_0}\cap \cdots \cap V_{i_k})$ 
and only finitely many $g_{i_0\cdots i_k}$ are non-zero;
for $k<0$ we let $C_c^{-k}(\V,\G')=0$.
We define a coboundary operator $\delta^*\colon C^{-k}_c(\V, \G') \to C^{-k+1}_c(\V, \G')$ by
formal contraction
\begin{equation*}
\delta^*(\sum_{|I|=k+1}' g_I V^*_I) = \sum_j V_j \lrcorner \sum_{|I|=k+1}' g_I V^*_I,
\end{equation*}
see \eqref{aj} and \eqref{oj} below.
If $\G$ is a sheaf (of abelian groups), then 
$V\to \G_{c}(V)$ is a precosheaf $\G'$ by extending sections by $0$.
We will write $C_c^{-k}(\V,\G)$ in place of $C_c^{-k}(\V,\G')$.

Assume now that there, for every open $V\subset X$, is a map $\F(V)\otimes \G'(V) \to \F'(V)$ where $\F'$ and $\G'$
are precosheaves on $X$. We then define a contraction map 
$\lrcorner\colon C^k(\V, \F) \times C^{-\ell}_c(\V, \G') \to C_c^{k-\ell}(\V,\F')$ by using the following computation rules. 
\begin{equation}\label{aj}
V_i \lrcorner V_j^* =
\left\{ \begin{array}{cc}
1, & i=j \\
0, & i\neq j \end{array}\right. ,
\end{equation}
\begin{equation}\label{oj}
V_i\lrcorner (V^*_{j_0}\wedge \cdots \wedge V^*_{j_{\ell}}) = \sum_{m=0}^{\ell} (-1)^{m}
V^*_{j_0}\wedge \cdots (V_i \lrcorner V^*_{j_m}) \cdots \wedge V^*_{j_{\ell}},
\end{equation}
\begin{equation*}
(V_{i_0}\wedge \cdots \wedge V_{i_k})\lrcorner V_J^* = 
\left\{ \begin{array}{cc}
0, & k > |J| \\
((V_{i_0}\wedge \cdots \wedge V_{i_{k-1}}))\lrcorner (V_{i_k}\lrcorner V_J^*), & k\leq |J| \end{array}\right. .
\end{equation*}
If $\F'$ and $\G'$ are sheaves we define in a similar way also the contraction
$\lrcorner\colon C^{-k}_c(\V, \G') \times C^{\ell}(\V, \F) \to C^{\ell-k}(\V,\F')$.
If $g=g_I V_I^*$ and $f=f_J V_J$, then
$g\lrcorner f=g_I f_J V_I^* \lrcorner V_J$, where $g_I f_J$
is the extension to $\bigcap_{i\in J\setminus I}V_i$ by $0$; this is
well-defined since $g_I f_J$ is $0$ in a neighborhood of the boundary
of $\bigcap_{j\in J}V_j$ in $\bigcap_{i\in J\setminus I}V_i$.

\begin{lemma}\label{lma:finecech}
If $\G$ is a fine sheaf, then 
\begin{equation*}
H^{-k}(C^{\bullet}_c(\V, \G), \delta^*) = \left\{
\begin{array}{cc}
0, & k \neq 0 \\
H^0_{c}(X, \G), & k=0
\end{array}\right. . 
\end{equation*}
\end{lemma}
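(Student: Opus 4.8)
The plan is to build an explicit contracting homotopy for the complex $(C^{\bullet}_c(\V,\G),\delta^*)$ out of a partition of unity. Since $\G$ is fine it is a module over $\mathcal{E}^{0,0}_X$, so there is a partition of unity $\{\rho_j\}\subset\mathcal{E}^{0,0}_X$ subordinate to $\V$, i.e.\ $\textrm{supp}\,\rho_j\subset V_j$, with the supports locally finite, and $\sum_j\rho_j\equiv 1$. Form the formal ``star $1$-cochain'' $\rho=\sum_j\rho_j V_j^*$ and define $s\colon C^{-k}_c(\V,\G)\to C^{-k-1}_c(\V,\G)$ by $s\alpha=\rho\wedge\alpha$, the wedge being the (antisymmetric) formal product in the $V_j^*$. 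The first thing to check is that $s$ is well defined: if $g_I\in\G_c(V_I)$ then $\rho_jg_I$ is supported in $\textrm{supp}\,\rho_j\cap\textrm{supp}\,g_I$, which is compact and contained in $V_j\cap V_I$, hence $\rho_jg_I\in\G_c(V_{\{j\}\cup I})$, and local finiteness of $\V$ keeps the resulting sum a finite cochain.

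Next I would record the two algebraic facts that drive the proof: first, by the rules \eqref{aj}--\eqref{oj}, $\delta^*$ is exactly interior multiplication $\iota_{\mathbf 1}:=\sum_j V_j\lrcorner(\cdot)$ by the formal ``vector'' $\mathbf 1=\sum_j V_j$, and this $\iota_{\mathbf 1}$ is an antiderivation of star-degree $-1$ with $\iota_{\mathbf 1}^2=0$; second, $\iota_{\mathbf 1}\rho=\sum_j\rho_j=1$. The antiderivation identity applied to $\rho\wedge\alpha$ then gives the Cartan-type formula $\iota_{\mathbf 1}(\rho\wedge\alpha)=\alpha-\rho\wedge\iota_{\mathbf 1}\alpha$, that is, $\delta^*s\alpha+s\delta^*\alpha=\alpha$. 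For $k\ge 1$ every term here lies in a genuine member of the complex, so this shows exactness at $C^{-k}_c$ and hence $H^{-k}(C^{\bullet}_c(\V,\G),\delta^*)=0$ for $k\neq 0$.

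For degree $0$ I would introduce the augmentation $\epsilon\colon C^0_c(\V,\G)\to H^0_c(X,\G)$, $\epsilon(\sum_j g_jV_j^*)=\sum_j g_j$ (sections extended by zero), which is precisely $\iota_{\mathbf 1}$ restricted to $C^0_c$. Then $\epsilon\circ\delta^*=\iota_{\mathbf 1}^2=0$, so $\textrm{im}\,(\delta^*\colon C^{-1}_c\to C^0_c)\subset\ker\epsilon$; conversely, the Cartan formula specializes for $\alpha\in C^0_c$ to $\delta^*s\alpha=\alpha-s(\epsilon\alpha)$, so $\epsilon\alpha=0$ forces $\alpha=\delta^*(s\alpha)$, giving the reverse inclusion. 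Finally $\epsilon$ is onto, since any compactly supported section $c$ equals $\epsilon(\sum_j(\rho_jc)V_j^*)$, a finite sum by local finiteness. Hence $H^0(C^{\bullet}_c(\V,\G),\delta^*)=C^0_c/\textrm{im}\,\delta^*=C^0_c/\ker\epsilon\cong H^0_c(X,\G)$, as claimed.

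The argument is essentially formal once the partition of unity is available; the only places needing attention are the compact-support bookkeeping that makes $s$ land in $C^{\bullet}_c$ and the sign conventions in the antiderivation rule for $\iota_{\mathbf 1}$ — that is the step I expect to be the most error-prone to write out in detail, though it presents no genuine difficulty.
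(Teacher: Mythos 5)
Your proof is correct and follows essentially the same route as the paper: both construct the contracting homotopy $g\mapsto\chi\wedge g$ from a partition of unity and use the antiderivation identity $\delta^*(\chi\wedge g)=g-\chi\wedge\delta^*g$ together with $\delta^*\chi=1$. Your treatment of the augmentation in degree $0$ and of the compact-support bookkeeping merely spells out steps the paper leaves implicit.
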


\begin{proof}
Let $\{\chi_j\}$ be a smooth partition of unity subordinate to $\V$ and let
$\chi=\sum_j \chi_j V^*_j$. Since $\delta^* \chi =\sum \chi_j=1$ we have 
\begin{equation*}
\delta^*(\chi\wedge g) = \delta^*(\chi) \cdot g - \chi\wedge \delta^*(g) = g - \chi\wedge \delta^*(g)
\end{equation*}
for $g\in C^{-k}_c(\V, \G)$.
Hence, if $g$ is $\delta^*$-closed, then $g$ is $\delta^*$-exact. It follows that the complex
\begin{equation*}
\cdots \stackrel{\delta^*}{\longrightarrow} C^{-1}_c(\V, \G) \stackrel{\delta^*}{\longrightarrow} 
C^{0}_c(\V, \G) \stackrel{\delta^*}{\longrightarrow}
H^0_{c}(X, \G) \to 0
\end{equation*}
is exact and so the lemma follows.
\end{proof}

\smallskip

\begin{center}
---
\end{center}

\smallskip

Let $X$ be a paracompact reduced complex space of pure dimension $n$.
Let $\aleph$ be the precosheaf on $X$ defined by 
\begin{equation*}
\aleph(V)=H^n(\B^{n,\bullet}_{c}(V), \debar),
\end{equation*}
\begin{equation*}
i^V_W\colon \aleph(V)\to \aleph(W), \quad i^V_W([\psi])=[\tilde{\psi}],
\end{equation*} 
where $\psi\in \B^{n,n}_{c}(V)$ and $\tilde{\psi}$ is the extension of $\psi$ by $0$.\footnote{In view of 
Theorem~\ref{thm:locdual1} and \cite[Proposition~8 (a)]{AK}, $\aleph$ is in fact a cosheaf.}
Let $\V=\{V_j\}$ be a suitable locally finite Leray covering of $X$ and consider the complexes
\begin{equation}\label{eq:cech0}
0  \to  C^0(\V, \hol_X)  \stackrel{\delta}{\longrightarrow}  C^1(\V, \hol_X)  \stackrel{\delta}{\longrightarrow}  \cdots \quad
\end{equation}
\begin{equation}\label{eq:cech}
 \quad \cdots  \stackrel{\delta^*}{\longrightarrow}  C^{-1}_c(\V, \aleph)  
\stackrel{\delta^*}{\longrightarrow}  C^0_c(\V, \aleph)  \to  0.
\end{equation}
By Theorem~\ref{thm:locdual1} we have non-degenerate pairings 
\begin{equation*}
Tr\colon C^k(\V, \hol_X)\times C^{-k}_c(\V, \aleph)\to \C, \quad Tr(f,g)= \int_X f \lrcorner g,
\end{equation*} 
induced by the trace map \eqref{eq:pair1}; 
in fact, Theorem~\ref{thm:locdual1} shows that these pairings make the complex \eqref{eq:cech} the topological dual of 
the complex of Frech\'{e}t spaces \eqref{eq:cech0}.
Moreover, if $f\in C^{k-1}(\V, \hol_X)$ and $g\in C^{-k}_c(\V, \aleph)$ we have
\begin{eqnarray}\label{ost}
Tr(\delta f, g) &=& \int_X (\delta f) \lrcorner g= \int_X \big(f\wedge \sum_j V_j \big) \lrcorner g =
\int_X f \lrcorner \big( (\sum_j V_j)\lrcorner g \big) \\ 
&=& \int_X f \lrcorner (\delta^* g) = Tr(f, \delta^* g). \nonumber
\end{eqnarray}
Hence, we get a well-defined pairing on cohomology level
\begin{equation}\label{eq:glass}
Tr\colon H^k\left( C^{\bullet}(\V, \hol_X) \right) \times H^{-k}\left( C^{\bullet}_c(\V, \aleph) \right) \to \C, \quad
Tr([f],[g]) = \int_X f\lrcorner g.
\end{equation}
Since $\V$ is a Leray covering we have 
\begin{equation}\label{eq:korv}
H^k\left( C^{\bullet}(\V, \hol_X) \right) \cong H^k(X,\hol_X)
\cong H^k\left(\A^{0,\bullet}(X)\right),
\end{equation}
and these isomorphisms induce canonical topologies
on $H^k(X,\hol_X)$ and $H^k\left(\A^{0,\bullet}(X)\right)$; cf.\ \cite[Lemma~1]{RaRu}.
To understand $H^{-k}\left( C^{\bullet}_c(\V,\aleph) \right)$, consider the double complex
\begin{equation*}
K^{-i,j}:=C^{-i}_c(\V, \B^{n,j}_X),
\end{equation*}
where the map $K^{-i,j}\to K^{-i+1, j}$ is the coboundary operator $\delta^*$ and 
the map $K^{-i,j}\to K^{-i,j+1}$ is $\debar$. We have that $K^{-i,j}=0$ if $i<0$ or $j<0$ or $j>n$.
Moreover, the ``rows'' $K^{-i,\bullet}$ are, by Theorem~\ref{thm:locdual1}, exact except for at 
the $n^{\textrm{th}}$ level where the cohomology is $C_c^{-i}(\V,\aleph)$; 
the ``columns'' $K^{\bullet,j}$ are exact except for at level $0$ where the cohomology 
is $\B^{n,j}_{c}(X)$ by Lemma~\ref{lma:finecech} since the sheaf $\B^{n,j}_X$ is fine.
By standard homological algebra (e.g., a spectral sequence argument) it follows that

\begin{equation}\label{eq:ut}
H^{-k}\left(C_c^{\bullet}(\V,\aleph)\right) \cong H^{n-k}\left(\B^{n,\bullet}_{c}(X), \debar\right),
\end{equation}
cf.\ also the proof of Theorem~\ref{thm:main2} below.
The vector space $C_c^{-k}(\V,\aleph)$ has a natural topology since it is the topological dual of 
the Frech\'{e}t space $C^k(\V,\hol_X)$;
therefore \eqref{eq:ut} gives a natural topology on $H^{n-k}(\B^{n,\bullet}_{c}(X))$.

\begin{lemma}\label{lma:hausdorff}
Assume that $H^k(X,\hol_X)$ and $H^{k+1}(X,\hol_X)$, considered as topological vector spaces, 
are Hausdorff. Then the pairing \eqref{eq:glass} is non-degenerate.
\end{lemma}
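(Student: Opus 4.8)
The plan is to deduce non-degeneracy of the pairing \eqref{eq:glass} from the fact, already noted, that the pairings $Tr\colon C^k(\V,\hol_X)\times C^{-k}_c(\V,\aleph)\to\C$ make the complex \eqref{eq:cech} the topological dual of the complex of Fr\'echet spaces \eqref{eq:cech0}. This is exactly the situation handled by abstract duality for complexes of topological vector spaces: if $(A^\bullet,d)$ is a complex of Fr\'echet spaces and $(B^\bullet,d')$ is its topological dual complex (so $B^{-k}=(A^k)'$ and $d'$ is the transpose of $d$), then the induced pairing $H^k(A^\bullet)\times H^{-k}(B^\bullet)\to\C$ is non-degenerate provided $H^k(A^\bullet)$ and $H^{k+1}(A^\bullet)$ are Hausdorff. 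I would cite this in the form used in \cite{RaRu} (Serre's lemma on duality of complexes, cf.\ also \cite[Theorem~(I)]{AK}); the Hausdorff hypothesis on $H^k$ and $H^{k+1}$ is precisely what is needed so that the image of $d$ is closed in the relevant degrees, which makes the transpose complex compute the dual of the cohomology.

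Concretely, first I would record that $C^k(\V,\hol_X)$ is a Fr\'echet space (a closed subspace of a product of spaces of holomorphic functions on Stein sets, via \eqref{eq:korv} and the identification $\hol(V')\cong\hol(D')/\J(D')$ used in the proof of Theorem~\ref{thm:locdual1}), that $\delta$ is continuous, and that by Theorem~\ref{thm:locdual1} the map $C^{-k}_c(\V,\aleph)\to (C^k(\V,\hol_X))'$ given by $g\mapsto Tr(\cdot,g)$ is an isomorphism of topological vector spaces intertwining $\delta^*$ with the transpose $\delta'$; this last point is \eqref{ost}. Then, since by \eqref{eq:korv} we have $H^k(C^\bullet(\V,\hol_X))\cong H^k(X,\hol_X)$ as topological vector spaces, the assumption that $H^k(X,\hol_X)$ and $H^{k+1}(X,\hol_X)$ are Hausdorff translates into the Hausdorff hypothesis needed to apply Serre's duality lemma to the pair of complexes \eqref{eq:cech0}, \eqref{eq:cech}. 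Applying that lemma yields that \eqref{eq:glass} is non-degenerate.

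The main obstacle, such as it is, is the bookkeeping needed to see that $Tr(\cdot,g)$ really is a topological isomorphism $C^{-k}_c(\V,\aleph)\xrightarrow{\ \sim\ }(C^k(\V,\hol_X))'$ and not merely a componentwise one: one must check that finite formal sums with the natural (strict inductive limit / direct sum) topology on the $C^{-k}_c$ side dualize correctly against the product/Fr\'echet topology on the $C^k$ side, i.e.\ that the dual of a countable product of Fr\'echet spaces is the direct sum of the duals and that this is compatible with passing to the closed subspaces cut out by the intersection conditions. This is standard functional analysis, but it is the place where care is required; once it is in place, the conclusion is immediate from the cited abstract lemma. I would therefore keep this step brief, referring to \cite[Section~7.3]{Horm} for the relevant facts about \v Cech complexes as topological vector spaces and to \cite{RaRu} for the duality lemma itself, and simply assemble the pieces.

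\begin{proof}
By \eqref{eq:korv} the complex \eqref{eq:cech0} is, in each degree, a Fr\'echet space: $C^k(\V,\hol_X)$ is a closed subspace of the countable product $\prod'_{|I|=k+1}\hol(V_I)$, where each $\hol(V_I)\cong\hol(D_I')/\J(D_I')$ carries its natural quotient Fr\'echet topology (cf.\ the proof of Theorem~\ref{thm:locdual1}), and $\delta$ is continuous. By Theorem~\ref{thm:locdual1}, for each multi-index $I$ the trace map makes $\aleph(V_I)=H^n(\A^{n,\bullet}_c(V_I),\debar)$ the topological dual of the Fr\'echet space $\hol(V_I)$; passing to finite formal sums and using that the dual of a countable product of Fr\'echet spaces is the direct sum of the duals, the map
\begin{equation*}
C^{-k}_c(\V,\aleph)\longrightarrow \big(C^k(\V,\hol_X)\big)', \qquad g\longmapsto \big(f\mapsto Tr(f,g)\big)=\big(f\mapsto \textstyle\int_X f\lrcorner g\big),
\end{equation*}
is an isomorphism of topological vector spaces. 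By \eqref{ost} this isomorphism intertwines $\delta^*$ on $C^{-\bullet}_c(\V,\aleph)$ with the transpose of $\delta$ on $C^\bullet(\V,\hol_X)$. Hence \eqref{eq:cech} is, as a complex of topological vector spaces, the dual complex of the complex of Fr\'echet spaces \eqref{eq:cech0}.

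Now recall that \eqref{eq:korv} is an isomorphism of topological vector spaces, so the hypothesis that $H^k(X,\hol_X)$ and $H^{k+1}(X,\hol_X)$ are Hausdorff is equivalent to $H^k\big(C^\bullet(\V,\hol_X)\big)$ and $H^{k+1}\big(C^\bullet(\V,\hol_X)\big)$ being Hausdorff. By Serre's duality lemma for complexes of Fr\'echet spaces and their duals (see \cite[Lemma~1 and its proof]{RaRu}, cf.\ also \cite[Theorem~(I)]{AK}), it follows that the induced pairing
\begin{equation*}
H^k\big(C^\bullet(\V,\hol_X)\big)\times H^{-k}\big(C^\bullet_c(\V,\aleph)\big)\longrightarrow \C
\end{equation*}
is non-degenerate; this is precisely the pairing \eqref{eq:glass}.
\end{proof}
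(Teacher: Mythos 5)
Your proof is correct and takes essentially the same route as the paper's: both reduce the statement to the abstract Serre/Ramis--Ruget duality lemma for a complex of Fr\'echet spaces and its topological dual complex (which the text preceding the lemma has already identified \eqref{eq:cech} to be), with the Hausdorff hypothesis supplying the closedness of the relevant images. The only difference is that the paper spells out the final functional-analytic step --- removing the closures of the images of $\delta$ and $\delta^*$ via the open mapping and Hahn--Banach theorems, citing \cite[Lemma~2]{RaRu} --- whereas you invoke the duality lemma as a single black box.
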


\begin{proof}
Since \eqref{eq:cech} is the topological dual of \eqref{eq:cech0} it follows (see, e.g., \cite[Lemma~2]{RaRu})
that the topological dual of 
\begin{equation}\label{eq:cohomgroup}
\textrm{Ker} \big(\delta \colon C^k(\V,\hol_X) \to C^{k+1}(\V,\hol_X)\big)/
\overline{\textrm{Im} \big(\delta \colon C^{k-1}(\V,\hol_X) \to C^{k}(\V,\hol_X)\big)}
\end{equation}
equals
\begin{equation}\label{eq:homgroup}
\textrm{Ker} \big(\delta^* \colon C^{-k}_c(\V, \omega^{n,n}_X) \to C^{-k+1}_c(\V,\omega^{n,n}_X)\big)/
\overline{\textrm{Im} \big(\delta^* \colon C^{-k-1}_c(\V,\omega^{n,n}_X) \to C^{-k}_c(\V,\omega^{n,n}_X)\big)}.
\end{equation}
Since $H^k(X,\hol_X)$ and $H^{k+1}(X,\hol_X)$ are Hausdorff it follows that the images of $\delta \colon C^{k-1} \to C^k$ 
and $\delta \colon C^k\to C^{k+1}$ are closed. Since the image of the latter map is closed it follows from 
the open mapping theorem and the Hahn-Banach theorem that also the image of $\delta^*\colon C_c^{-k-1}\to C_c^{-k}$
is closed. The images of $\delta$ and $\delta^*$ in \eqref{eq:cohomgroup}
and \eqref{eq:homgroup} are thus closed and so the closure signs may be removed. Hence, \eqref{eq:glass} makes
$H^{-k}(C_c^{\bullet}(\V,\omega_X^{n,n}))$ the topological dual of $H^k(X,\hol_X)$.
\end{proof}

\begin{remark}
If $X$ is compact the Cartan-Serre theorem says that the cohomology of coherent sheaves on $X$
is finite dimensional, in particular Hausdorff. 
In the compact case the pairing \eqref{eq:glass} is thus always
non-degenerate. 
The pairing \eqref{eq:glass} is also always non-degenerate if
$X$ is holomorphically convex since then, by \cite[Lemma~II.1]{Prill}, $H^k(X,\mathscr{S})$ is
Hausdorff for any coherent sheaf $\mathscr{S}$.

If $X$ is $q$-convex it follows from the Andreotti-Grauert theorem that for any coherent sheaf $\mathscr{S}$, 
$H^k(X,\mathscr{S})$ is Hausdorff for $k\geq q$. Hence, in this case, \eqref{eq:glass} is non-degenerate for $k\geq q$.
\end{remark}

\begin{proof}[Proof of Theorem~\ref{thm:main2}]
For notational convenience we assume that $\F=\hol_X$.
By Lemma~\ref{lma:hausdorff} we know that \eqref{eq:glass} is non-degenerate.
In view of the Dolbeault isomorphisms \eqref{eq:korv} and \eqref{eq:ut} we get an 
induced non-degenerate pairing
\begin{equation*}
Tr \colon H^k\left(\A^{0,\bullet}(X)\right) \times H^{n-k}\left(\B^{n,\bullet}_{c}(X)\right) \to \C.
\end{equation*}
It remains to see that this induced trace map is realized by
$([\varphi],[\psi]) \mapsto \int_X \varphi\wedge \psi$;
for this we will make \eqref{eq:korv} and \eqref{eq:ut} explicit.

Let $\{\chi_j\}$ be a partition of unity subordinate to $\V$, and let
$\chi=\sum_j \chi_j V_j^*$.
We will use the convention that forms \emph{commute} with all $V^*_i$ and $V_j$, i.e.,
if $\xi$ is a differential form then
\begin{equation*}
\xi V^*_I = V^*_I \xi, \quad V^*_I \lrcorner (\xi V_J) = \xi V^*_I\lrcorner V_J.
\end{equation*}
Moreover, we let $\debar(\xi V^*_I)=\debar \xi V^*_I$.
We now let
\begin{equation*}
T_{k,j} \colon C^k(\V,\hol_X) \to C^{k-j-1}(\V,\A^{0,j}_X), \quad T_{k,j}(f) = (\chi\wedge (\debar\chi)^j)\lrcorner f,
\end{equation*}
where we put $C^{-1}(\V, \A_X^{0,k})=\A^{0,k}(X)$ and $C^{\ell}(\V, \A_X^{0,k})=0$ 
for $\ell<-1$.\footnote{In fact, the image of $T_{k,j}$ is contained in $C^{k-j-1}(\V,\mathcal{E}^{0,j}_X)$.}
Using that $\chi \lrcorner V =1$ it is straightforward to verify that
\begin{equation}\label{vad}
T_{k,j}(\delta \tilde{f}) = \delta T_{k-1,j}(\tilde{f}) + (-1)^{k-j}
\debar T_{k-1,j-1} (\tilde{f}), \quad \tilde{f}\in C^{k-1}(\V,\hol_X).
\end{equation}
It follows that if $f\in C^k(\V,\hol_X)$ is $\delta$-closed then $T_{k,k}(f)$ is $\debar$-closed and if 
$f$ is $\delta$-exact then $T_{k,k}(f)$ is $\debar$-exact. Thus $T_{k,k}$ induces a map
\begin{equation*}
\textrm{Dol} \colon H^k(C^{\bullet}(\V,\hol_X)) \to H^k(\A^{0,\bullet}(X)), \quad
\textrm{Dol}([f]_{\delta}) = [T_{k,k}(f)]_{\debar};
\end{equation*}
this is a realization of the composed isomorphism \eqref{eq:korv}.

To make \eqref{eq:ut} explicit, let $[g]\in C^{-k}_c(\V,\aleph)$, where 
$g\in C^{-k}_c(\V,\B^{n,n}_X)$, be $\delta^*$-closed. This means
that there is a $\tau^{n-1}\in C^{-k+1}_c(\V,\B^{n,n-1}_X)$ such that $\delta^*g=\debar \tau^{n-1}$.
Hence, $\debar \delta^* \tau^{n-1} = \delta^* \debar \tau^{n-1} = \delta^*\delta^* g = 0$
and so by Theorem~\ref{thm:locdual1} there is a $\tau^{n-2}\in C_c^{-k+2}(\V,\B_X^{n,n-2})$ such that
$\delta^*\tau^{n-1} = \debar \tau^{n-2}$. Continuing in this way we obtain, for all $j$, 
$\tau^{n-j}\in C_c^{-k+j}(\V,\B_X^{n,n-j})$ such that $\delta^*\tau^{n-j} = \debar \tau^{n-j-1}$.
It follows that $\delta^*\tau^{n-k}\in \B^{n,n-k}_{c}(X)$, cf.\ the proof of Lemma~\ref{lma:finecech},
and that it is $\debar$-closed.
One can verify that if $[g]\in C^{-k}_c(\V,\aleph)$ is $\delta^*$-exact then $\delta^*\tau^{n-k}$ is $\debar$-exact
and so we get a well-defined map
\begin{equation*}
\textrm{Dol}^*\colon H^{-k}(C_c^{\bullet}(\V,\aleph)) \to H^{n-k}(\B^{n,\bullet}_{c}(X)), \quad
\textrm{Dol}^* ([g]_{\debar}) = [\delta^*\tau^{n-k}]_{\debar};
\end{equation*}
this is a realization of the isomorphism \eqref{eq:ut}.

Let now $f\in C^k(\V,\hol_X)$ be $\delta$-closed
and let $[g]\in C^{-k}_c(\V,\aleph)$ be $\delta^*$-closed. 
One checks that $\delta T_{k,0}(f)=(-1)^k f$ and thus, by
\eqref{vad}, we have
\begin{equation*}
\delta T_{k,j}(f) =
\left\{\begin{array}{ll}
(-1)^{k-j} \debar T_{k,j-1}(f), & 1\leq j \leq k \\
(-1)^k f, & j=0
\end{array}.\right.
\end{equation*}
Using this and the computation in \eqref{ost} we get
\begin{eqnarray*}
\int_X f\lrcorner g &=&
(-1)^k\int_X \delta T_{k,0}(f) \lrcorner g = (-1)^k\int_X T_{k,0}(f)\lrcorner \delta^*g 
= (-1)^k\int_X T_{k,0}(f) \lrcorner \debar \tau^{n-1}\\
&=& (-1)^{k+1}\int_X \debar T_{k,0}(f) \lrcorner \tau^{n-1} = (-1)^{2k}\int_X \delta T_{k,1}(f) \lrcorner \tau^{n-1} \\
&=&(-1)^{2k}\int_X T_{k,1}(f) \lrcorner \delta^*\tau^{n-1} = \cdots = (-1)^{k(k+1)}\int_X T_{k,k}(f) \lrcorner \delta^*\tau^{n-k} \\
&=& \int_X \textrm{Dol}([f]) \wedge \textrm{Dol}^*([g]).
\end{eqnarray*}
\end{proof}

\section{Compatibility with the cup product}\label{sec:cup}
Assume that $X$ is compact and Cohen-Macaulay. In view of
\cite[Theorem~1.2]{AS} 
 and Theorem~\ref{thm:resol} we have that
\begin{equation}\label{eq:Doliso}
H^k(X, \hol_X)\cong H^k\left(\A^{0,\bullet}(X), \debar\right) \quad \textrm{and} \quad 
H^k(X, \omega_X^{n,0})\cong H^k\left(\B^{n,\bullet}(X), \debar\right),
\end{equation}
cf.\ the introduction. 
Now we make these Dolbeault isomorphisms explicit in a slightly different way than in the previous section:
We adopt in this section the standard definition of \v{C}ech cochain groups so that
now 
\begin{equation*}
C^p(\V, \F):=\prod_{\alpha_0\neq \alpha_1\neq \cdots \neq \alpha_p} \F(V_{\alpha_0}\cap\cdots \cap V_{\alpha_p})
\end{equation*}
for a sheaf $\F$ on $X$ and a locally finite open cover $\V=\{V_{\alpha}\}$.

Let $\V$ be a 
Leray covering and let $\{\chi_{\alpha}\}$ be a smooth partition of unity subordinate to $\V$.
Following \cite[Chapter IV, \S 6]{Demailly}, given \v{C}ech cocycles $c\in C^p(\V, \hol_X)$ and $c'\in C^q(\V, \omega_X^{n,0})$
we define \v{C}ech cochains $f\in C^0(\V, \A^{0,p}_X)$ and 
$f' \in C^0(\V, \B^{n,q}_X)$ by
\begin{equation*}
f_{\alpha}= \sum_{\nu_0,\ldots,\nu_{p-1}} \debar \chi_{\nu_0} \wedge \cdots \wedge \debar \chi_{\nu_{p-1}} \cdot
c_{\nu_0 \cdots \nu_{p-1} \alpha} \quad \textrm{in} \quad V_{\alpha},
\end{equation*}
\begin{equation*}
f'_{\alpha}= \sum_{\nu_0,\ldots,\nu_{q-1}} \debar \chi_{\nu_0} \wedge \cdots \wedge \debar \chi_{\nu_{q-1}} \wedge
c'_{\nu_0 \cdots \nu_{q-1} \alpha} \quad \textrm{in} \quad V_{\alpha}.
\end{equation*}
In fact, $f$ and $f'$ are cocycles and define $\debar$-closed global sections
\begin{equation}\label{eq:Dolmap1}
\varphi=\sum_{\nu_p} \chi_{\nu_p} f_{\nu_p}=
\sum_{\nu_0,\ldots,\nu_{p}} \chi_{\nu_p}\debar \chi_{\nu_0} \wedge \cdots \wedge \debar \chi_{\nu_{p-1}} \cdot
c_{\nu_0 \cdots \nu_{p}} \in \A^{0,p}(X),
\end{equation} 
\begin{equation}\label{eq:Dolmap2}
\varphi'=\sum_{\nu_q} \chi_{\nu_q} f'_{\nu_q}=
\sum_{\nu_0,\ldots,\nu_{q}} \chi_{\nu_q}\debar \chi_{\nu_0} \wedge \cdots \wedge \debar \chi_{\nu_{q-1}} \wedge
c'_{\nu_0 \cdots \nu_{q}} \in \B^{n,q}(X).
\end{equation}
The Dolbeault isomorphisms \eqref{eq:Doliso} are then realized by
\begin{equation*}
H^p(X, \hol_X) \stackrel{\simeq}{\longrightarrow} H^p(\A^{0,\bullet}(X)), \quad
[c] \mapsto [\varphi], \quad \textrm{and}
\end{equation*}
\begin{equation*}
H^q(X, \omega_X^{n,0}) \stackrel{\simeq}{\longrightarrow} H^q(\B^{n,\bullet}(X)), \quad
[c'] \mapsto [\varphi'],
\end{equation*}
respectively.

We can now show that the cup product is compatible with our trace map on the level of cohomology.

\begin{proposition}\label{prop:commutative}
The following diagram commutes.
\begin{equation*}
\begin{array}{ccc}
H^p(X,\hol_X) \times H^q(X, \omega_X^{n,0}) & \stackrel{\cup}{\longrightarrow} & H^{p+q}(X, \omega_X^{n,0}) \\
\downarrow & & \downarrow \\
H^p(\A^{0,\bullet}(X)) \times H^q(\B^{n,\bullet}(X)) & \stackrel{\wedge}{\longrightarrow} & H^{p+q}(\B^{n,\bullet}(X)),
\end{array}
\end{equation*}
where the vertical mappings are the Dolbeault isomorphisms.
\end{proposition}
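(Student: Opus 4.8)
The plan is to reduce the commutativity of the diagram to a single explicit identity in $\A^{n,p+q}(X)$ and then to establish it by a homotopy argument of the kind used to prove that \v{C}ech and Dolbeault cup products agree. Fix \v{C}ech cocycles $c\in C^p(\V,\hol_X)$ and $c'\in C^q(\V,\omega_X^{n,0})$ and let $\varphi\in\A^{0,p}(X)$, $\varphi'\in\A^{n,q}(X)$ be the $\debar$-closed sections given by \eqref{eq:Dolmap1} and \eqref{eq:Dolmap2}. The cup product is represented at the cochain level by the cocycle $(c\smile c')_{\alpha_0\cdots\alpha_{p+q}}=c_{\alpha_0\cdots\alpha_p}\cdot c'_{\alpha_p\cdots\alpha_{p+q}}$ in $C^{p+q}(\V,\omega_X^{n,0})$; let $\Psi\in\A^{n,p+q}(X)$ be the result of inserting it into \eqref{eq:Dolmap2}. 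Going the other way around the diagram one looks at $\varphi\wedge\varphi'$. Note that $\varphi$ is in fact a smooth $(0,p)$-form, being a finite sum of holomorphic functions times smooth $\debar\chi$-factors; hence $\varphi\wedge\varphi'$ is just the $\mathcal{E}_X^{0,0}$-module action on the section $\varphi'\in\A^{n,q}(X)$, so it lies in $\A^{n,p+q}(X)$ and, by uniqueness, agrees with the wedge of Theorem~\ref{thm:parning}. Everything therefore comes down to producing an explicit $\sigma\in\A^{n,p+q-1}(X)$ with $\varphi\wedge\varphi'-\Psi=\debar\sigma$.

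First I would record the conceptual reason this should hold. Form the \v{C}ech--Dolbeault bicomplexes $K^{i,j}=C^i(\V,\A^{n,j}_X)$ and $L^{i,j}=C^i(\V,\A^{0,j}_X)$ with differentials $\delta$ and $\debar$. Since $X$ is Cohen--Macaulay, $(\A^{n,\bullet}_X,\debar)$ is a fine resolution of $\omega_X^{n,0}$ by Theorem~\ref{thm:resol}; as each $\A^{n,j}_X$ is fine and $\V$ is Leray for $\omega_X^{n,0}$, the total cohomology of $K$ is computed both by the edge complex $C^\bullet(\V,\omega_X^{n,0})$ and by the edge complex $\A^{n,\bullet}(X)$, and the resulting isomorphism is realized by the zig-zag built from the \v{C}ech homotopy $g\mapsto\sum_\nu\chi_\nu(V_\nu\lrcorner g)$ --- which is exactly the map \eqref{eq:Dolmap2}, since iterating $\debar$ after this homotopy produces the $\debar\chi$-factors; the same applies to $L$ and \eqref{eq:Dolmap1}. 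The \v{C}ech cup product together with the wedge product of Theorem~\ref{thm:parning} make $L^{\bullet,\bullet}\otimes K^{\bullet,\bullet}\to K^{\bullet,\bullet}$ a morphism of bicomplexes (using the Leibniz rule of Theorem~\ref{thm:parning} for compatibility with $\debar$), restricting to $\cup$ on the \v{C}ech edge and to $\wedge$ on the Dolbeault edge; the commutativity of the diagram is then the general principle that such a zig-zag isomorphism is multiplicative up to homotopy. To make this concrete I would exhibit $\sigma$ by telescoping: interpolate between $\Psi$ and $\pm\,\varphi\wedge\varphi'$ through a chain of explicit sections of $\A^{n,p+q}_X$ obtained from $c\smile c'$ by progressively turning the $\debar\chi$-factor attached to the index shared by the cup product into an undifferentiated $\chi$-factor and detaching the index strings of $c$ and $c'$, checking at each step that consecutive members differ by $\debar$ of an explicit term by means of the cocycle relations $\delta c=\delta c'=0$ and the partition-of-unity identities $\sum_\nu\chi_\nu=1$, $\sum_\nu\debar\chi_\nu=0$; cf.\ \cite[Chapter~IV, \S6]{Demailly}.

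The main obstacle is the bookkeeping in this last step: tracking the signs produced by reordering the anticommuting $\debar\chi$-factors and by the \v{C}ech differentials, and checking that each intermediate $\sigma$ really lies in $\A^{n,\bullet}(X)$ and is $\debar$-exact \emph{there}, not merely as a current. The latter point is harmless, since these terms are again finite sums of smooth $(0,*)$-forms wedged with sections of $\A^{n,0}_X$, hence sections of $\A^{n,\bullet}_X$, and $\debar$ preserves $\A^{n,\bullet}_X$ by Theorem~\ref{thm:debar}; the genuine analytic content hidden in the singular structure forms that enter the $c'_{\alpha_0\cdots\alpha_q}$ is already packaged in the Leibniz rule of Theorem~\ref{thm:parning}, so no further analysis of the singularities is required.
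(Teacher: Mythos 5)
Your reduction is sound, and your observation that $\varphi$ is a \emph{smooth} $(0,p)$-form --- so that $\varphi\wedge\varphi'$ is just the $\mathcal{E}^{0,0}_X$-module action on $\varphi'\in\A^{n,q}(X)$, lies in $\A^{n,p+q}(X)$, and agrees with the product of Theorem~\ref{thm:parning} by uniqueness --- is exactly what makes the singular structure forms harmless here. But the telescoping homotopy that you defer as ``bookkeeping'' is not needed: for the specific representatives \eqref{eq:Dolmap1} and \eqref{eq:Dolmap2} one has $\varphi\wedge\varphi'=\Psi$ \emph{on the nose}, so $\sigma=0$. The point is that $f=(f_\alpha)$ is a cocycle in $C^0(\V,\A^{0,p}_X)$ (this uses $\delta c=0$ together with $\sum_\nu\debar\chi_\nu=0$), i.e.\ $f_\alpha=f_\beta$ on overlaps, so the global form $\varphi$ satisfies $\varphi|_{V_\alpha}=f_\alpha$ with no cut-off attached. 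Writing $\varphi'=\sum_{\mu_0,\ldots,\mu_q}\chi_{\mu_q}\,\debar\chi_{\mu_0}\wedge\cdots\wedge\debar\chi_{\mu_{q-1}}\wedge c'_{\mu_0\cdots\mu_q}$, each summand of $\varphi\wedge\varphi'$ is supported in $V_{\mu_0}\cap\cdots\cap V_{\mu_q}$, where one may replace $\varphi$ by $f_{\mu_0}$; expanding $f_{\mu_0}=\sum_{\nu_0,\ldots,\nu_{p-1}}\debar\chi_{\nu_0}\wedge\cdots\wedge\debar\chi_{\nu_{p-1}}\, c_{\nu_0\cdots\nu_{p-1}\mu_0}$ and relabeling $\mu_j=\nu_{p+j}$ gives precisely \eqref{eq:cuphack}, which is $\Psi$. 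That one-line computation is the paper's entire proof. Your route --- the general multiplicativity-up-to-homotopy of the \v{C}ech--Dolbeault zig-zag --- is the robust statement that works for arbitrary choices of Dolbeault representatives and would certainly succeed if carried out, but as written the proposal stops exactly where the work would begin (producing $\sigma$ and tracking its signs); noticing the cocycle identity $\varphi|_{V_\alpha}=f_\alpha$ collapses that entire step.
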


\begin{proof}
Let $\V=\{V_{\alpha}\}$ be a Leray covering of $X$.
Let $[c]\in H^p(X, \hol_X)$ and $[c']\in H^q(X, \omega_X^{n,0})$, where 
$c\in C^p(\V, \hol_X)$ and $c'\in C^q(\V, \omega_X^{n,0})$ are cocycles. Then
$c\cup c' \in C^{p+q}(\V, \omega_X^{n,0})$, defined by
\begin{equation*}
(c\cup c')_{\alpha_0 \cdots \alpha_{p+q}} = c_{\alpha_0 \cdots \alpha_p} \cdot c'_{\alpha_p \cdots \alpha_{p+q}} \quad
\textrm{in} \,\, V_{\alpha_0}\cap \cdots \cap V_{\alpha_{p+q}},
\end{equation*}
is a cocycle representing $[c] \cup [c'] \in \check{H}^{p+q}(X, \omega_X^{n,0})$. The image of 
$[c]\cup [c']$ in $H^{p+q}(\B^{n,\bullet}(X))$ is the cohomology class defined by the $\debar$-closed current
\begin{equation}\label{eq:cuphack}
\sum_{\nu_0,\ldots,\nu_{p+q}} \chi_{\nu_{p+q}}\debar \chi_{\nu_0} \wedge \cdots \wedge \debar \chi_{\nu_{p+q-1}} \wedge
c_{\nu_0 \cdots \nu_{p}} \cdot c'_{\nu_p \cdots \nu_{p+q}} \in \B^{n,p+q}(X).
\end{equation} 

The images of $[c]$ and $[c']$ in Dolbeault cohomology are, respectively, the cohomology classes of the $\debar$-closed currents
$\varphi$ and $\varphi'$ defined by \eqref{eq:Dolmap1} and \eqref{eq:Dolmap2}. Notice that 
\begin{equation*}
\varphi|_{V_{\nu_p}} = 
\sum_{\nu_0,\ldots,\nu_{p-1}} \debar \chi_{\nu_0} \wedge \cdots \wedge \debar \chi_{\nu_{p-1}} \cdot
c_{\nu_0 \cdots \nu_{p-1} \nu_p}.
\end{equation*}
Therefore, $\varphi\wedge \varphi'$ is given by \eqref{eq:cuphack} as well.
\end{proof}

Notice that $H^n(X,\omega_X^{n,0})\simeq \C$ (e.g.\ as it is the dual of $H^0(X,\hol_X)$) and any two realizations of this 
isomorphism are the same up to a multiplicative constant. In the compact Cohen-Macaulay case it thus follows 
from Proposition~\ref{prop:commutative}
that the duality of this paper, up to a multiplicative constant, 
is the same as the abstractly defined duality in complex and algebraic geometry.

\end{document}